%%latex2e%
\documentclass[11pt,leqno]{article}
\usepackage{amsmath,amssymb,amscd,latexsym,bm}

%%% mark changes:

\usepackage{xcolor}

\usepackage{changebar}

%%%
\usepackage[all]{xy}

\newcommand{\C}{{\mathbb{C}}}
\newcommand{\F}{{\mathbb{F}}}
\newcommand{\oF}{\overline{\F}}
\newcommand{\Ge}{\mathbb{G}}
\newcommand{\Pa}{{\mathbb{P}}}
\newcommand{\Q}{{\mathbb{Q}}}
\newcommand{\oQ}{\overline{\Q}}

\newcommand{\Z}{{\mathbb{Z}}}
\newcommand{\oZ}{\overline{\Z}}

\newcommand{\tQ}{\tilde{Q}}

\newcommand{\Alb}{\mathrm{Alb}}
\newcommand{\ch}{\mathrm{ch}\,}

\newcommand{\et}{\mathrm{\acute{e}t}}

\newcommand{\Free}{\mathrm{Free}}
\newcommand{\id}{\mathrm{id}}

\renewcommand{\mod}{\mathrm{mod}\,}
\newcommand{\Mor}{\mathrm{Mor}}

\newcommand{\red}{\mathrm{red}}
\newcommand{\rk}{\mathrm{rk}\,}
\newcommand{\Spec}{\mathrm{Spec}\,}
\newcommand{\spec}{\mathrm{Spec}\,}
\newcommand{\ssp}{\mathrm{sp}}

\newcommand{\Aut}{\mathrm{Aut}}

\newcommand{\Ext}{\mathrm{Ext}}
\newcommand{\Fr}{\mathrm{Fr}}

\newcommand{\Gal}{\mathrm{Gal}}
\newcommand{\GL}{\mathrm{GL}\,}
\newcommand{\bGL}{\mathbf{GL}}
\newcommand{\Hom}{\mathrm{Hom}}
\newcommand{\uHom}{\underline{\Hom}}
\newcommand{\Imm}{\mathrm{Im}\,}
\newcommand{\Iso}{\mathrm{Iso}\,}
\newcommand{\Ker}{\mathrm{Ker}\,}

\newcommand{\Ob}{\mathrm{Ob}\,}
\newcommand{\Pic}{\mathrm{Pic}}
\newcommand{\PGL}{\mathrm{PGL}}
\newcommand{\rank}{\mathrm{rank}}
\newcommand{\Rep}{\mathbf{Rep}\,}
\newcommand{\ttop}{\mathrm{top}}
\newcommand{\uVec}{\mathbf{Vec}}
\newcommand{\nVec}{\mathrm{Vec}}
\newcommand{\Ch}{{\mathcal C}}
\newcommand{\Bh}{{\mathcal B}}

\newcommand{\Eh}{{\mathcal E}}
\newcommand{\tEh}{\tilde{\Eh}}
\newcommand{\Gh}{{\mathcal G}}

\newcommand{\Kh}{\mathcal{K}}
\newcommand{\oKh}{\overline{\Kh}}

\newcommand{\Mh}{{\mathcal M}}

\newcommand{\Oh}{{\mathcal O}}

\newcommand{\Th}{{\mathcal T}}
\newcommand{\Wh}{\mathcal{W}}
\newcommand{\Yh}{\mathcal{Y}}
\newcommand{\Xh}{{\cal X}}
\newcommand{\Zh}{\mathcal{Z}}

\newcommand{\emm}{{\mathfrak{m}}}
\newcommand{\en}{\mathfrak{n}}
\newcommand{\eo}{\mathfrak{o}}
\newcommand{\ep}{\mathfrak{p}}

\newcommand{\eU}{\mathfrak{U}}
\newcommand{\eX}{{\mathfrak X}}
\newcommand{\oeX}{\overline{\eX}}
\newcommand{\teX}{\tilde{\eX}}

\newcommand{\oK}{\overline{K}}

\newcommand{\tYh}{\tilde{\Yh}}

\newcommand{\tgamma}{\tilde{\gamma}}
\newcommand{\blambda}{\bm{\lambda}}
\newcommand{\hsi}{{}^{\sigma}\!}
\newcommand{\tpi}{\tilde{\pi}}
\newcommand{\opi}{\overline{\pi}}

\newcommand{\talpha}{\tilde{\alpha}}

\newcommand{\tU}{\tilde{U}}

\newcommand{\tvarphi}{\tilde{\varphi}}
\newcommand{\oeta}{\overline{\eta}}
\newcommand{\tV}{\tilde{V}}
\newcommand{\tx}{\tilde{x}}
\newcommand{\ty}{\tilde{y}}

\newcommand{\tZ}{\tilde{Z}}

\newcommand{\oQu}{\overline{Q}}

\newcommand{\ohne}{\setminus}
\newcommand{\iso}{\stackrel{\sim}{\rightarrow}}
\newtheorem{theorem}{Theorem}[section]
\newtheorem{lemma}[theorem]{Lemma}
\newtheorem{prop}[theorem]{Proposition}
\newtheorem{defn}[theorem]{Definition}
\newtheorem{cor}[theorem]{Corollary}

\newtheorem{notation}[theorem]{Notation}
\newtheorem{constr}[theorem]{Construction}

\newenvironment{claim}{\noindent {\bf Claim}}{}
\newenvironment{rem}{\noindent {\bf Remark}}{}
\newenvironment{rems}{\noindent {\bf Remarks}}{}

\newenvironment{theoremon}{\noindent {\bf Theorem}\it}{}
\newenvironment{theorembhatt}{\noindent {\bf Theorem \ref{bhatt} }\it}{}
\newenvironment{proofof}{\noindent {\bf Proof of}}{\mbox{}\hspace*{\fill}$\Box$}
\newenvironment{proof}{\noindent {\bf Proof}}{\mbox{}\hspace*{\fill}$\Box$}

%%%%%%% Definitionen Christopher %%%%%%%%%%%
\newcommand{\pr}{\mathrm{pr}}
\newcommand{\tFr}{\tilde{\Fr}}
\newcommand{\tX}{\tilde{X}}
\newcommand{\Fh}{\mathcal{F}}
\newcommand{\Quot}{\mathrm{Quot}}
\newcommand{\uEnd}{\underline{\mathrm{End}}}
\newcommand{\verk}{\mbox{\scriptsize $\,\circ\,$}}

%%%%%%%%%%%%%%%%%%%%%%%%%%%%%%%%%%%%%%%%%%%%
\textheight120ex
\textwidth85ex
\oddsidemargin1cm
\parskip1.8ex
\parindent0em
%\newcounter{subsection}%[section]
\begin{document}
\title{Parallel transport for vector bundles on  $p$-adic varieties}
\author{Christopher Deninger \and Annette Werner}
\date{}
\maketitle

\centerline{\bf Abstract:} We develop a theory of \'etale parallel transport for vector bundles with numerically flat reduction on a $p$-adic variety. 
This construction is compatible with natural operations on vector bundles, Galois equivariant and functorial with respect to morphisms of varieties. 
In particular, it provides a continuous $p$-adic representation of the \'etale fundamental group for every vector bundle with numerically flat reduction.
The results in the present paper generalize previous work by the authors on curves.  They can be seen as a $p$-adic analog of higher-dimensional 
generalizations of the classical Narasimhan-Seshadri correspondence on complex varieties. Moreover, they provide new insights into Faltings' $p$-adic Simpson correspondence between
small Higgs bundles and small generalized representations  by establishing a class of vector bundles with vanishing Higgs field giving rise to actual (not only generalized) representations.

\small 
~\\[0.3cm]

\centerline{{\bf 2010 MSC: 14J20, 11G25} } 

\section{Introduction} \label{intro}

In the present paper we develop a theory of \'etale parallel transport for suitable vector bundles on a  $p$-adic variety $X$. The vector bundles we consider are those with numerically flat (equivalently, Nori-semistable) reduction on a  model of $X$ over the ring of integers. Referring to later sections for the precise definitions let us begin by stating our main result, which combines Proposition \ref{cd-t35} and Theorem \ref{cd-t36}.

\begin{theoremon} Let $X$ be a smooth, complete variety over $\oQ_p$ and let $E$ be a vector bundle on $X \otimes \C_p$ with numerically flat reduction. Then for any \'etale path $\gamma$ from a point $x \in X (\C_p)$ to a point $x' \in X (\C_p)$ our construction below gives a canonical isomorphism
\[
\rho_E (\gamma) : E_x \iso E_{x'} \; .
\]
We have $\rho_E (\gamma_1 \cdot \gamma_2) = \rho_E (\gamma_1) \verk \rho_E (\gamma_2)$ if the paths $\gamma_1$ and $\gamma_2$ can be composed. The parallel transport $\rho_E (\gamma)$ depends continuously on $\gamma$ and it is compatible with direct sums, tensor products, duals, internal homs and exterior powers. Moreover it is Galois equivariant and functorial with respect to morphisms of varieties. In particular we obtain a continuous representation
\begin{equation}
%\label{eq:i1}
\rho_{E,x} : \pi_1 (X,x) \longrightarrow \GL (E_x)
\end{equation}
for every $x \in X (\C_p)$. 
\end{theoremon}

In fact, all vector bundles considered in this result are numerically flat on $X \otimes \C_p$ by Theorem \ref{cd-t38}. If the smooth variety $X \otimes \C_p$ is projective this is equivalent to the fact that they have vanishing Chern classes and are slope
semistable with respect to one (equivalently: all) polarizations. The theorem generalizes previous results for curves \cite{dewe1}. Note that our parallel transport can be seen as a $p$-adic analog of generalizations of the classical Narasimhan--Seshadri correspondence to higher dimensional complex varieties as in  \cite{D}, \cite{MR} and \cite{UY}. In the complex case, this theory is the special case of the Simpson correspondence \cite{S} in the case of vanishing Higgs field. There exists also a $p$-adic version of Simpson's correspondence developed by Faltings \cite{Fa2} and \cite{fa3}. A detailed and systematic treatment is provided by \cite{agt}. If $X$ is a curve, this correspondence relates Higgs bundles and generalized representations. It is shown in \cite{xu} that for curves our parallel transport is an inverse to Faltings' functor for a category of of suitable vector bundles with trivial Higgs field. It is an open problem, however, to determine the category of Higgs bundles corresponding to actual $p$-adic representations in the $p$-adic Simpson correspondence.

In higher dimensions, Faltings' $p$-adic Simpson correspondence gives an equivalence between small Higgs bundles and small generalized representations. Our main new insight into the zero Higgs field case of \cite{Fa2} is the construction of actual (not merely generalized)  
representations associated to a natural category of vector bundles $E$. Apart from this, our result provides a functor of parallel transport which  is a stronger structure than a representation of the fundamental group. Note that Liu and Zhu \cite{liuzhu} establish a Riemann-Hilbert functor on rigid analytic varieties which yields part of a $p$-adic Simpson correspondence, namely a tensor functor from the category of \'etale $\Q_p$-local systems to the category of nilpotent Higgs bundles. We also draw the reader's attention to the paper \cite{ogus} establishing a characteristic $p$ analog of Simpson's nonabelian Hodge theory and to the related construction of Higgs-deRham flows in \cite{lsz}.

In the present paper, the parallel transport morphisms are defined by taking a $p$-adic limit over morphisms modulo all $p^n$. This relies on  infinitely many reduction conditions on the vector bundle. An important step of the proof of the theorem is therefore to show that the numerical flatness condition we impose on the reduction of a vector bundle implies these 
infinitely many reduction conditions on suitable covers of the original model. This requires in particular the construction of covers on which certain cohomology obstructions vanish. Here we need a strengthening of Bhatt's result \cite{bh1} which is due to Bhatt and Snowdon \cite{bh2}.

It is an interesting open problem to find a semistability condition for a vector bundle on the smooth projective variety $X$ itself, i.e. not involving models, which guarantees the existence of \'etale parallel transport. 

Let us now describe the contents of the present paper in more detail. 
Section \ref{sec:cd1} deals with numerically flat vector bundles on a projective scheme $X$  over a field $k$. Later on, we want to apply these results to special fibers of integral models of our $p$-adic variety. By definition, a vector bundle $E$ on $X$ is called numerically flat, if $E$ and its dual are nef. This is equivalent to the fact that for all morphisms
 $f : C \to X$ from smooth projective curves $C$  to $X$ over $k$, the bundle $f^* E$ is semistable of degree zero. Generalizing a boundedness result by Langer, we show that for $k = \oF_p$ numerically flat bundles have vanishing Chern classes and become trivial after pullback to some $Y \rightarrow X$  which is the composition of a finite \'etale morphism with a Frobenius (see Theorems \ref{cd-t2} and \ref{cd-t3}). As a byproduct of the work in Section \ref{sec:cd1} we show that the $S$-fundamental group and Nori's fundamental group are isomorphic group schemes for every projective connected reduced scheme over a finite field or its algebraic closure. Formerly this was known in the smooth case.

In Section \ref{sec:cd2} we provide the necessary background on models of $p$-adic varieties over the ring of integers. We show how to pass to projective models with reduced special fibers and smooth generic fibers in Lemma \ref{cd-t13}. Here we have to deal with a number of technical difficulties, e.g. we need equivariant resolution of singularities. 

In Section \ref{sec:cd3} we provide the formal framework for constructing parallel transport modulo $p^n$. In particular, we prove a Seifert-van Kampen theorem for the groupoid of \'etale paths.  
 This kind of gluing result is necessary since later on we work with covers of models which are  only locally finite \'etale in the generic fiber. 
 Our basic construction is given in \ref{cd-t26}: Let $\Eh$ be a vector bundle on a model $\eX$ of a $p$-adic variety and assume  that there exists a suitable morphism $\pi: \Yh \rightarrow \eX$ which is finite \'etale surjective over some open dense subset $U$ of the generic fiber of $\eX$ and which has the property that $\pi^\ast \Eh$ is trivial modulo $p^n$. Then $\pi^\ast \Eh$ can be endowed with a trivial parallel transport modulo $p^n$, and this descends naturally to parallel transport along \'etale paths in $U$.

The following three sections show how to apply this construction to vector bundles with numerically flat reduction. In Section \ref{section5} we prove that every vector bundle $\Eh$ on a model $\eX$ of a smooth and projective $\oQ_p$-variety $X$ which has numerically trivial special fiber gives rise to the following data: an open covering $\{U_i\}$ of $X$ together with complete morphisms $\pi_i: \Yh_i \rightarrow \eX$ which are finite \'etale and surjective over $U_i$ such that $\pi_i^\ast \Eh$ has trivial special fiber (see Theorem \ref{cd-t22}). In order to prove this theorem, we use the fact that a numerically flat bundle on the special fiber of $\eX$ can be trivialized by the composition of a finite \'etale covering of the special fiber followed by a Frobenius. By a careful geometric argument involving a naive Frobenius on projective space we succeed to lift a suitable  covering of the Frobenius to characteristic zero, which gives the desired trivializing covers.  

The goal of Section \ref{sec:cd6} is the following theorem, which is a strengthening of \cite{bh1}, Theorem 1.2 over  a complete discrete valuation ring  $\eo_K$ of mixed characteristic and perfect residue field $k$. 

\begin{theorembhatt} Let $\eX$ be a reduced and proper scheme over $\eo_K$, and let $F \subset \eX$ a finite set of closed points which is contained in a connected, quasi-projective, smooth open subset of the generic fiber $\eX_K$. 
There is an $\eo_K$-scheme $\Zh$ and a proper $\eo_K$-morphism $f:\Zh \rightarrow \eX$ such that

i) For all $i \ge 1$, the pullback $f^\ast: H^i(\eX, {\cal O}_\eX) \rightarrow H^i (\Zh, {\cal O}_\Zh)$ is divisible by $p$ as a group homomorphism.

ii) The map  $f$ is finite \'etale surjective around $F$. 

\end{theorembhatt}

In fact, the proof follows the proof of \cite{bh1}, Theorem 1.2 paying close attention to the alterations involved in order to establish the new condition ii). 
For this purpose the paper \cite{bh2} by Bhatt and Snowdon is a crucial tool. 

In Section \ref{sec:cd5}, we use the previous theorem in order to prove the following result (which combines Theorem \ref{cd-t22} and Theorem \ref{cd-t23}).

\begin{theoremon} Let $X$ be a smooth complete variety over $\oQ_p$ and $\eX$ a model of $X$ over $\oZ_p$. Let $\Eh$ be a vector bundle on $\eX$ with numerically flat special fiber. For every $n \geq 1$ there exists an open covering $\{U_i\}$ of $X$ together with complete morphisms $\pi_i: \Yh_i \rightarrow \eX$ which are finite \'etale and surjective over $U_i$ such that $\pi_i^\ast \Eh$ is trivial modulo $p^n$.
\end{theoremon}

In order to prove this theorem, we start with the covering which trivializes the special fiber of $\Eh$ and lift it to coverings trivializing $\Eh$ modulo $p^n$ by
 killing obstructions in $H^1 (\eX \otimes \oZ_p / p^n \oZ_p , \Oh)$ and $H^2 (\eX \otimes \oZ_p / p^n \oZ_p, \Oh)$ with the help of Theorem \ref{bhatt}.  
 
Now our basic construction \ref{cd-t26} applies. We can glue with the help of our Seifert-van Kampen result and pass to the  $p$-adic limit in order to define in Section \ref{sec:cd31} isomorphisms of $p$-adic parallel transport along \'etale paths for all vector bundles $\Eh$ with numerically flat reduction on a model $\eX$, see Theorem \ref{cd-t30n} and Proposition \ref{prop-rho}.  In Section \ref{sec:cd4} we vary the models and deduce our main theorem above.  

Section \ref{sec:cd8} deals with a straightforward generalization of our theory to a bigger class of vector bundles which in particular contains all line bundles with vanishing rational first Chern class.

{\it Acknowledgements.} We are very grateful to Bhargav Bhatt, Johan de Jong, Adrian Langer  and Stefan Wewers for helpful discussions and email exchanges. Many thanks go to Bhargav Bhatt for kindly refining the main theorem of \cite{bh1} in \cite{bh2}.
\section{Numerically flat bundles} \label{sec:cd1}
Throughout this paper, a variety over a field $k$ is a geometrically integral separated scheme of finite type over $k$. 
Recall that a scheme is called integral if it is irreducible and reduced.

We want to study the following class of vector bundles which was introduced in \cite{DM} (2.34).

\begin{defn}
\label{cd-t1}
Let $X$ be a connected and complete scheme over a perfect field $k$. A vector bundle $E$ over $X$ is called Nori-semistable if for all $k$-morphisms $f : C \to X$ from smooth connected projective curves $C$ over $k$ into $X$ the pullback $f^* E$ is semistable of degree zero.
\end{defn}

A related but different notion is due to Nori \cite{N} who considers only pullbacks to embedded smooth projective curves. In \cite{DM} the bundles in Definition 1 are simply called semistable. Since this can lead to misunderstandings, the name Nori-semistable has been adopted by several authors. 

By definition $E$ is Nori-semistable on $X$ if and only if $E \, |_{X^{\red}}$ is Nori-semistable on $X^{\red}$. Moreover for a $k$-morphism $g : Y \to X$ of complete connected $k$-schemes and a Nori-semistable bundle $E$ on $X$ the pullback $g^* E$ is a Nori-semistable bundle on $Y$.

The bundles in question have an internal characterization. Recall that a vector bundle $E$ on a connected complete $k$-scheme $X$ is called nef if the line bundle $\Oh_{\Pa (E)} (1)$ on $\Pa (E)$ is nef. 

The vector bundle $E$ is called numerically flat if $E$ and its dual bundle $E^*$ are nef. It follows from the definition that if $g : Y \to X$ is a surjective $k$-morphism of complete connected $k$-schemes and if $g^* E$ is numerically flat then the vector bundle $E$ on $X$ is numerically flat as well.

It is known that $E$ is a Nori-semistable bundle if and only if $E$ is numerically flat, see \cite{L1} 1.2.

A line bundle $L$ on a projective integral scheme $X$ over $k$ is numerically flat if and only if it is in $\Pic^{\tau} X$ i.e. if a tensor power $L^N$ is in $\Pic^0 X$, see \cite{L2} 1.3.

Consider a complete algebraic scheme $X$ over a field $k$ and let $A_* (X)$ and $A^* (X) = A^* (X \xrightarrow{\id} X)$ be the homology and the (operational) cohomology theory, respectively,  of algebraic cycles modulo rational equivalence, \cite{F} Chapter 17. There is a cap product pairing
\[
\cap : A^i (X) \times A_j (X) \longrightarrow A_{j-i} (X)
\]
and the well-known degree map on zero cycles
\[
\int_X : A_0 (X) \longrightarrow \Z.
\]
The singular Grothendieck-Riemann-Roch theorem of Baum--Fulton--MacPherson implies that for all vector bundles $E$ on $X$ we have
\begin{equation}
\label{cd:1}
\chi (X, E) = \int_X ch (E) \cap Td (X)
\end{equation}
for a certain class $Td (X)$ in $A_* (X)_{\Q}$, see \cite{F} Corollary 18.3.1. Here $\ch (E) \in A^* (X)_{\Q}$ is the (operational) Chern character of \cite{F} Example 3.2.3. It satisfies the formula
\[
\ch (E \otimes F) = \ch (E) \, \ch (F)
\]
for vector bundles $E$ and $F$. Thus if $E$ has rank $r$ and all Chern classes of $E$ in $A^* (X)_{\Q}$ vanish, we have
\begin{equation}
\label{cd:2}
\chi (X, E \otimes F) = r \chi (X,F) \; .
\end{equation}

%It is known that numerically flat vector bundles $E$ on smooth projective varieties have vanishing Chern classes $c_i (E)$ in the (torsion-free) quotient $N^i (X)$ of $A^i (X) \cong A_{d-i} (X)$ by the relation of numerical equivalence. Over fields of characteristic zero the proof uses transcendental methods \cite{}. Over fields of positive characteristic $p$, Langer argues as follows, \cite{} proof of Theorem 2.2: Firstly the set of numerically flat vector bundles of a given rank is bounded by \cite{L2} Theorem 1.1. Next, he notes that a bounded family has only finitely many Chern classes in $N^* (X)$, c.f. \cite{} Lemma 1.1. Now the equation $c_i (\Fr^{\nu *} E) = p^{\nu i} c_i (E)$ implies that $c_i (E) = 0$ for all $i$ since the Frobenius pullbacks $\Fr^{\nu *} E$ for $\nu \ge 1$ are again numerically flat and $N^* (X)$ has no $\Z$-torsion. 

In the case where the base field $k$ is finite or where $k = \oF_p$ there is the following characterization of numerically flat vector bundles:

\begin{theorem}
\label{cd-t2}
Let $E$ be a vector bundle on a projective connected scheme $X$ over $k = \F_q$ or $k = \oF_p$. Then the following properties are equivalent:\\
i) $E$ is numerically flat (equivalently Nori-semistable).\\
ii) There are a projective connected scheme $Y$ over $k$, a finite \'etale morphism $\pi : Y \to X$, and an integer $\nu \ge 0$, such that for the composition
\[
\varphi : Y \xrightarrow{F^{\nu}} Y \xrightarrow {\pi} X
\]
the pullback $\varphi^* E$ is a trivial bundle. Here for $F$ we may either take the absolute Frobenius morphism $F = \Fr_p$ or a $k$-linear Frobenius morphism $F = \Fr_q = \Fr^r_p$ where $q = p^r$ (for $k = \F_q$) resp. $F = \Fr_q \otimes_{\F_q} \id_k$ (for $k = \oF_p$) corresponding to an isomorphism $Y = Y_0 \otimes_{\F_q} k$ with $Y_0$ a proper scheme over $\F_q$. 
\end{theorem}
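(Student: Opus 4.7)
The implication (ii) $\Rightarrow$ (i) is nearly immediate: a trivial bundle is numerically flat; the composition $\varphi = \pi \verk F^{\nu}$ is surjective since both a finite \'etale morphism and a Frobenius are surjective on a complete connected $k$-scheme; hence by the descent property for numerical flatness under surjective morphisms of complete connected $k$-schemes recalled just before the theorem, $E$ itself is numerically flat. The substantive direction is (i) $\Rightarrow$ (ii), where my plan is to combine Langer's boundedness theorem for numerically trivial semistable sheaves with a Lange--Stuhler style descent that produces an \'etale trivializing cover out of Frobenius periodicity.

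First, Frobenius pullback preserves Nori-semistability: for any $k$-morphism $f : C \to X$ from a smooth projective curve, $f^{*} F^{*} E \cong F^{*} f^{*} E$ is semistable of degree zero whenever $f^{*} E$ is, so every $F^{m*} E$, $m \ge 0$, is again numerically flat. Next, apply (the appropriate extension to possibly singular or non-reduced projective schemes of) Langer's boundedness result to conclude that the isomorphism classes of numerically flat rank-$r$ bundles on $X$ lie in a family parametrized by a $k$-scheme of finite type. When $k = \F_q$ this scheme has only finitely many $k$-points, and for $k = \oF_p$ one descends the situation to a finite subfield of $k$. A pigeonhole argument applied to the Frobenius orbit $\{F^{m*}E\}_{m \ge 0}$ then produces integers $0 \le m < n$ together with an isomorphism $\sigma : F^{s*}E' \iso E'$, where $E' = F^{m*}E$ and $s = n - m$.

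The pair $(E', \sigma)$ is a unit-root $F^{s}$-crystal on $X$, and by a Lange--Stuhler style dictionary such data on a projective connected scheme over $\F_q$ (respectively $\oF_p$) correspond to continuous finite-dimensional representations of $\pi_1^{\et}(X)$ over an appropriate finite field. The image of such a representation is finite, and its kernel defines a finite \'etale Galois cover of $X$; replacing it by a connected component gives a finite \'etale $\pi : Y \to X$ with $\pi^{*} E'$ trivial. Setting $\nu = m$ and $\varphi = \pi \verk F^{\nu}$, a diagram chase using naturality of Frobenius identifies $\varphi^{*} E$ with $\pi^{*} E'$, which is trivial as required; the distinction between absolute and $k$-linear Frobenius stated in the theorem is routine bookkeeping via the factorization $Y = Y_0 \otimes_{\F_q} k$. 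The principal obstacle I expect is the boundedness input: Langer originally proved it in the smooth projective setting, whereas here $X$ may be singular or non-reduced, so extending it is the technical heart of the argument; once boundedness is in hand the pigeonhole step and the Lange--Stuhler trivialization are essentially formal.
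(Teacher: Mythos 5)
Your overall strategy coincides with the paper's: (ii) $\Rightarrow$ (i) via surjectivity of $\pi \circ F^\nu$ and the descent property of numerical flatness, and (i) $\Rightarrow$ (ii) via Langer's boundedness, pigeonhole on the Frobenius orbit, and a Lange--Stuhler trivialization. But the gap you flag is genuine and is in fact where most of the paper's proof lives: Langer's boundedness (\cite{L2}, Theorem 1.1) is invoked directly only for \emph{normal projective varieties} over $\F_q$, and Theorem~\ref{cd-t4} --- finiteness of isomorphism classes of numerically flat bundles on a general projective connected $\F_q$-scheme --- has to be established before Lange--Stuhler can be applied in the generality of Theorem~\ref{cd-t2}. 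The paper closes this by a bootstrap whose order of operations matters: it first proves (i) $\Rightarrow$ (ii) for normal projective varieties using Langer; from the resulting cover $\varphi = \pi \circ F^\nu$ it deduces vanishing of $c_i(E)$ in $A^*(X)_\Q$ for $i \ge 1$ there (via $\pi_*\pi^* = d$ and $c_i(\Fr_p^*E) = p^i c_i(E)$); it then propagates this Chern-class vanishing to arbitrary complete connected $X$ using injectivity of pullback along an envelope $X'' \to X$ whose components are normal projective varieties (Theorem~\ref{cd-t3}). The vanishing fixes the Hilbert polynomial of the cokernel of the adjunction $E \hookrightarrow \pi_*\pi^*E$, with $\pi$ the normalization map of the components of $X_{\red}$, so the relevant quotients are parametrized by a Quot scheme of finite type over $\F_q$, giving finiteness for reduced $X$; the non-reduced case then follows by filtering along powers of the nilideal and using finiteness of the resulting $\Ext^1$ groups. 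Only after this does Lange--Stuhler apply in general. Your remaining steps --- that Frobenius pullback preserves numerical flatness because nefness pulls back along $\Pa(F^*E) \to \Pa(E)$ (needed for the pigeonhole but left implicit in the paper), the diagram chase identifying $\varphi^*E$ with $\pi^*E'$ via $F \circ \pi = \pi \circ F$, and the initial descent from $\oF_p$ to some $\F_q$ --- are correct and match the paper.
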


The theorem will be proved together with the following result

\begin{theorem}
\label{cd-t3}
Let $E$ be a numerically flat (equivalently Nori-semistable) bundle of rank $r$ on a complete connected scheme $X$ over $k = \F_q$ or $k = \oF_p$. Then $c_i (E) = 0$ in $A^* (X)_{\Q}$ for all $i \ge 1$ and $\chi (X, E \otimes F) = r \chi (X, F)$ for all vector bundles $F$ on $X$.
\end{theorem}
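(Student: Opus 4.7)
The strategy is to use Theorem \ref{cd-t2} to pass to a cover on which $E$ becomes trivial, and then descend the vanishing of Chern classes (and hence of the Chern character) back to $X$. First I would invoke Theorem \ref{cd-t2} to obtain a morphism $\varphi = \pi \verk F^\nu : Y \to X$, with $\pi$ finite \'etale and $F$ an (absolute or $k$-linear) Frobenius, such that $\varphi^* E \cong \Oh_Y^r$. Since the trivial bundle has vanishing higher Chern classes, functoriality of the operational Chern classes gives $\varphi^* c_i(E) = c_i(\varphi^* E) = 0$ in $A^i(Y)_\Q$ for every $i \geq 1$.

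The heart of the proof is to deduce from this that $c_i(E) = 0$ already in $A^i(X)_\Q$, i.e.\ to prove injectivity of $\varphi^* : A^i(X)_\Q \to A^i(Y)_\Q$ for $i \geq 1$. I would handle the two factors of $\varphi$ separately. For the finite \'etale factor $\pi$ of some constant degree $n$, the projection formula identity $\pi_* \pi^* = n \cdot \id$ on bivariant classes (see \cite{F}, Chapter~17) implies injectivity of $\pi^*$ after $\otimes \Q$. For the Frobenius factor, the basic formula $F^* L \cong L^{\otimes p}$ for every line bundle $L$ forces $F^* c_1(L) = p\, c_1(L)$, and the splitting principle for operational Chern classes extends this to the statement that $F^*$ acts as multiplication by $p^i$ on $A^i(Y)_\Q$. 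Hence $F^{\nu *} = p^{\nu i}$, which is injective for $i \geq 1$. Composing, $\varphi^*$ is injective in positive degree, and therefore $c_i(E) = 0$ in $A^i(X)_\Q$ for all $i \geq 1$.

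Once the higher Chern classes vanish rationally, $\ch(E) = r$ in $A^*(X)_\Q$, so multiplicativity of $\ch$ yields $\ch(E \otimes F) = r\, \ch(F)$; inserting this into the Baum--Fulton--MacPherson formula (\ref{cd:1}) gives $\chi(X, E \otimes F) = \int_X r\, \ch(F) \cap Td(X) = r\, \chi(X,F)$, which is exactly (\ref{cd:2}). The main obstacle I foresee is the Frobenius calculation on the operational Chow ring of the possibly non-smooth scheme $Y$: on smooth projective varieties $F^*$ acts by $p^i$ on degree-$i$ rational Chow classes by a classical argument, but extending this to the full bivariant ring requires either an alteration argument (reducing to a smooth cover, with a compatibility check) or a careful direct verification from the bivariant definition via the splitting principle applied to all vector bundles on $Y$.
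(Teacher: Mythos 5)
Your proposed argument for the case where Theorem \ref{cd-t2} applies---pulling back along $\varphi = \pi \verk F^\nu$, using $\pi_*\pi^* = d$ for the finite \'etale part and $c_i(\Fr_p^* E) = p^i c_i(E)$ for the Frobenius part---is exactly the paper's core computation. (The paper phrases the Frobenius step via the Chern class identity rather than injectivity of $F^*$ on all of $A^i_\Q$, which side-steps the worry you raise at the end: one never needs $F^*$ to act as $p^i$ on the whole operational group, only on the image of $c_i$, and that follows directly from the splitting principle and $\Fr_p^*L \cong L^{\otimes p}$.) But there is a genuine gap at the very first step: you cannot invoke Theorem \ref{cd-t2} on $X$.

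Two things go wrong. First, Theorem \ref{cd-t3} is stated for $X$ \emph{complete} connected, whereas Theorem \ref{cd-t2} requires $X$ \emph{projective}; a complete scheme need not be projective, so there is no trivializing cover available a priori. Second, even for projective $X$, the paper proves the implication i)~$\Rightarrow$~ii) of Theorem \ref{cd-t2} in that generality only \emph{after} Theorem \ref{cd-t3}, and uses Theorem \ref{cd-t3} crucially: the finiteness of numerically flat bundles on a non-normal projective scheme is obtained by bounding Hilbert polynomials via the equality $\chi(X,E(n)) = r\,\chi(X,\Oh(n))$, which is precisely the conclusion of Theorem \ref{cd-t3}. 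Applying Theorem \ref{cd-t2} directly to a general (non-normal, or merely complete) $X$ in a proof of Theorem \ref{cd-t3} is therefore circular. The paper's way around this is to prove Theorem \ref{cd-t2} and the Chern class vanishing first only for \emph{normal projective varieties}, and then reduce the general complete connected case to that one by constructing, via Chow envelopes and induction on dimension, an envelope $\pi : X'' \to X$ whose source is a disjoint union of normal projective varieties. Since pullback along an envelope is injective on $A^*(-)_\Q$ (\cite{F}, Example 17.3.2), vanishing of $c_i(E)$ can then be checked on the pieces of $X''$, where the cover--Frobenius argument applies. This envelope reduction is the missing ingredient in your proposal. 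The final step, from $c_i(E)=0$ to $\chi(X, E\otimes F) = r\,\chi(X,F)$ via Baum--Fulton--MacPherson, is correct as you wrote it.
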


\begin{proof}
Since $k$ is perfect the projection $\pr_Y$ in the following diagram defining the relative Frobenius morphism $\tFr_q$ is an isomorphism
\[
\xymatrix{
Y \ar@/^1pc/[drr]^{\Fr_q} \ar[dr]^{\tFr_q} \ar@/_/[ddr] \\
 & Y \otimes_{k, ()^q} k \ar[r]^-{\overset{\pr_Y}{\sim}} \ar[d] & Y  \ar[d] \\
 & \spec k \ar[r]^{\overset{\Fr_q}{\sim}} & \spec k
}
\]
Any isomorphism $Y = Y_0 \otimes_{\F_q} k$ gives rise to a commutative square
\[
\xymatrix{
Y \ar[r]^-{\tFr_q} \ar@{=}[d] & Y \otimes_{k , ()^q} k \ar@{=}[d] \\
Y_0 \otimes_{\F_q} k \ar[r]^{\Fr_q \otimes \id_k} & Y_0 \otimes_{\F_q} k
}
\]
of $k$-morphisms. It follows that for a vector bundle on $Y$ it is equivalent to be trivialized by either of the pullbacks via $\Fr_q , \tFr_q$ or $\Fr_q \otimes \id_k$. Now assume that ii) holds. Since $\tFr_q$ and $\pi$ are surjective, the $k$-linear morphism $\tvarphi = \pi \verk \tFr^{\nu}_q$ is surjective as well. Since numerical flatness can be tested after pullback by a surjective $k$-linear morphism, it follows that ii) implies i). For the reserve implication we may assume that $k$ is finite since $E$ and $X$ can be descended to a numerically flat vector bundle on a projective connected scheme over $\F_q$ for some $q$. We first treat the case where the scheme $X$ is a projective {\it normal} variety over the finite field $k$. Using Langer's boundedness result \cite{L2} Theorem 1.1 it follows that the set of isomorphism classes of numerically flat vector bundles on $X$ of a given rank is finite. Hence there are integers $\mu > \nu \ge 0$ such that $\Fr^{\nu *}_q E \cong \Fr^{\mu *}_q E$. Setting $G = \Fr^{\nu *}_q E$ and $n = \mu - \nu$ it follows that $\Fr^{n *}_q G \cong G$. The proof of \cite{LS} Satz 1.4 extends without change to an arbitrary $\F_q$-scheme (note that finiteness is not proved there but true). As Adrian Langer pointed out to us, this result is also contained in the earlier paper \cite{ka}, see the proof of Proposition 4.1. Hence there is a finite \'etale morphism $\pi : Y \to X$ such that $\pi^* G = \pi^* \Fr^{\nu *}_q E = \Fr^{\nu *}_q \pi^* E$ is a trivial bundle. Since $X$ is connected we may assume that $Y$ is connected as well (by passing to a connected component). Moreover, since $X$ is projective and $\pi$ is finite, $Y$ is projective as well. Thus assertion ii) is proved for normal projective varieties $X$. Next we note that for any vector bundle $E$ on $X$ such that $\varphi^* E$ is trivial the Chern classes $c_i (E) \in A^* (X)_{\Q}$ vanish for $i \ge 1$. This follows from the relation $\pi_* \pi^* = d$ on $A^* (X)$ where $d$ is the degree of the finite \'etale map $\pi$ and from the formula $c_i (\Fr^*_p E) = p^i c_i (E)$. Hence Theorem \ref{cd-t3} holds if $X$ is a normal projective variety over $k = \F_q$ or $\oF_p$. To deduce Theorem \ref{cd-t3} in general we need the following notions from \cite{F} Definition 18.3, see also \cite{Ki}. An envelope of a scheme $X$ is a proper morphism $f : X' \to X$ such that for every subvariety $V$ of $X$ there is a subvariety $V'$ of $X'$ such that $f$ maps $V'$ birationally onto $V$. According to \cite{F} Example 17.3.2 the pullback morphism
\[
f^* : A^* (X) \longrightarrow A^* (X')
\]
along an envelope $f$ is injective. If $X$ and $X'$ are $K$-schemes for a field $K$ and $f$ is a morphism over $\spec K$ the envelope is called a Chow envelope if $X'$ is quasiprojective over $\spec K$. Any $K$-scheme $X$ has a Chow envelope $f : X' \to X$ by \cite{F} Lemma 18.3 (3) and we may assume that $X'$ is reduced. We claim that if $X$ is a complete $K$-scheme there exists an envelope $g : X'' \to X$ where $X''$ is the disjoint union of projective normal varieties over $\spec K$. Since the composition of envelopes is an envelope we may assume by passing to a Chow envelope that $X$ is a reduced projective $K$-scheme. The regular locus $U \subset X$ is dense in $X$. By induction on the dimension of $X$ we can assume that there is an envelope $X''_1 \to X \setminus U$ of the desired type over the singular locus of $X$. Let $X''_2$ be the disjoint union of the normalizations of the irreducible components of $X$. Then
\[
\pi : X'' := X''_1 \amalg X''_2 \longrightarrow X
\]
is the desired envelope. Hence the pullback morphism
\[
\pi^* : A^* (X) \longrightarrow A^* (X'')
\]
is injective. Consider $X$ as in Theorem \ref{cd-t3} and let $E$ be a numerically flat vector bundle on $X$. Then $\pi^* E$ is numerically flat on each connected component $X''_{(\alpha)}$ of $X''$. Since $X''_{(\alpha)}$ is a projective normal variety over $k$ it follows from what we have seen that $\pi^* c_i (E) = c_i (\pi^* E)$ is zero in $A^* (X'')_{\Q} = \bigoplus_{\alpha} A^* (X''_{(\alpha)})_{\Q}$ for $i \ge 1$. Since $\pi^*$ is injective the Chern classes $c_i (E)$ in $A^* (X)_{\Q}$ vanish as well and formula \eqref{cd:2} holds. Thus Theorem \ref{cd-t3} is proved. We will now prove the implication i) $\Rightarrow$ ii) in Theorem \ref{cd-t2} for arbitrary projective connected $k$-schemes $X$. As mentioned above it suffices to consider the case $k = \F_q$. It is enough to show that there are only finitely many isomorphism classes of numerically flat vector bundles on $X$. Then we can use the Langer--Stuhler argument as before. Note that finiteness is implied by \cite{lan04}, Theorem 4.4. Since the reduction step to the smooth case is left to the reader in loc. cit., let us give an argument here. We first assume that $X$ is reduced. Let $\tX$ be the disjoint union of the normalizations of the irreducible components of $X$ and let $\pi : \tX \to X$ be the corresponding finite surjective morphism. For a numerically flat vector bundle $E$ on $X$ the restrictions of $\pi^* E$ to the connected components of $\tX$ which are normal projective varieties over $k = \F_q$ are numerically flat. By the above, up to isomorphism there are only finitely many possibilities for them and hence for $\pi^* E$. The adjunction map $E \to \pi_* \pi^* E$ is injective and we obtain an exact sequence of coherent sheaves on $X$
\[
0 \longrightarrow E \longrightarrow \Fh \longrightarrow \Gh \longrightarrow 0
\]
where $\Fh = \pi_* \pi^* E$. Since there are only finitely many possibilities for $\Fh$ we may assume that $\Fh$ is any fixed coherent sheaf on $X$ and we have to show that up to isomorphism it has at most finitely many quotients $\Fh \twoheadrightarrow \Gh$ whose kernel is $E$. Choose a polarization $\Oh (1)$ of the projective variety $X$. By Theorem \ref{cd-t3} we have $\chi (X, E(n)) = r \chi (X , \Oh (n))$ where $r$ is the rank of $E$. Since we fixed $\Fh$, the Hilbert polynomial of $\Gh$ is fixed as well
\[
\chi (X, \Gh (n)) = \chi (X , \Fh (n)) - r \chi (X , \Oh (n)) =: P (n) \; .
\]
The isomorphism classes of the relevant quotients $\Gh$ of $\Fh$ therefore correspond to the $\F_q$-valued points of the Quot scheme $\Quot^P_{\Fh / X / \F_q}$ which exists and is of finite type over $\F_q$ by a result of Grothendieck. Hence there are only finitely many $\Gh$'s and hence $E$'s. Finally let $X$ be an arbitrary connected projective scheme over $k = \F_q$. We have just seen that up to isomorphism there are only finitely many numerically flat vector bundles on $X_{\red}$. Let $J$ be the nilideal of $\Oh_X$ and let $X_n$ be the subscheme of $X$ corresponding to $J^n$. For large enough $n$ we have $X_n = X$ and $X_1 = X_{\red}$. We will show by induction on $n$ that there are only finitely many numerically flat vector bundles $E$ on $X_n$. The case $n = 1$ being settled, assume that the assertion has been shown on $X_{n-1}$ for some $n \ge 2$. Consider the exact sequence of $\Oh_{X_n}$-module sheaves
\[
0 \longrightarrow J^{n-1} / J^n \longrightarrow \Oh_{X_n} \longrightarrow \Oh_{X_{n-1}} \longrightarrow 0 \; .
\]
Tensoring with $E$ and noting that $J^{n-1} / J^n$ is an $\Oh_{X_1} = \Oh_X / J$-module we get the exact sequence
\[
0 \longrightarrow E \, |_{X_1} \otimes J^{n-1} / J^n \longrightarrow E \longrightarrow E \, |_{X_{n-1}} \longrightarrow 0 \; .
\]
By the induction assumption it is therefore enough to show that there are only finitely many extension classes of $E \, |_{X_{n-1}}$ by $E \, |_{X_1} \otimes J^{n-1} / J^n$ on $X_n$. We have
\begin{align*}
 & \Ext^1_{X_n} (E \, |_{X_{n-1}} , E \, |_{X_1} \otimes J^{n-1} / J^n) \\
= & \Ext^1_{X_1} (E \, |_{X_1} , E \, |_{X_1} \otimes J^{n-1} / J^n) \\
= & H^1 (X_1 , \uEnd (E \, |_{X_1}) \otimes J^{n-1} / J^n) \; .
\end{align*}
Since $X_1$ is projective over $\F_q$ and the coefficients are coherent, this cohomology group is finite.
Note that the 
\end{proof}

We note the following fact which was shown in the preceding proof

\begin{theorem}
\label{cd-t4}
There are only finitely many isomorphism classes of numerically flat (or Nori-semistable) vector bundles on a projective connected scheme over a finite field.
\end{theorem}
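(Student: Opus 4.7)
The plan is to reduce the finiteness statement through three stages: first normal projective varieties, then reduced projective schemes, then arbitrary (possibly non-reduced) connected projective schemes over $\F_q$.

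For the normal case, where $X$ is a normal projective variety over $\F_q$, I would invoke Langer's boundedness result \cite{L2}, Theorem 1.1, directly: the numerically flat vector bundles of fixed rank form a bounded family. Combined with Theorem \ref{cd-t3}, which fixes the Hilbert polynomial of any rank-$r$ numerically flat bundle $E$ (namely $\chi(X,E(n)) = r\chi(X,\Oh(n))$ with respect to any polarization $\Oh(1)$) and the fact that such bundles are slope semistable, only finitely many isomorphism classes can occur.

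To pass to a reduced projective $X$, I would let $\pi : \tX \to X$ be the disjoint union of the normalizations of the irreducible components, a finite surjective morphism. For a numerically flat $E$ on $X$, the restriction of $\pi^\ast E$ to each component of $\tX$ is numerically flat on a normal projective variety, so by the previous step the coherent sheaf $\Fh := \pi_\ast \pi^\ast E$ belongs to a finite list. The adjunction $E \to \Fh$ is injective, producing
\[
0 \longrightarrow E \longrightarrow \Fh \longrightarrow \Gh \longrightarrow 0.
\]
By Theorem \ref{cd-t3} the Hilbert polynomial $P$ of $\Gh$ is determined by that of $\Fh$ and by $r\chi(X,\Oh(n))$. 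Hence the quotients in question correspond to $\F_q$-rational points of Grothendieck's Quot scheme $\Quot^P_{\Fh/X/\F_q}$, which is of finite type over $\F_q$ and therefore has only finitely many $\F_q$-points; this bounds the isomorphism classes of $E$.

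Finally, for arbitrary connected projective $X$ over $\F_q$, let $J \subset \Oh_X$ be the nilideal and $X_n \subset X$ the closed subscheme cut out by $J^n$, so that $X_1 = X_{\red}$ and $X_n = X$ for $n$ sufficiently large. I would induct on $n$, the case $n=1$ being handled above. Given a numerically flat $E$ on $X_n$, tensoring the sequence $0 \to J^{n-1}/J^n \to \Oh_{X_n} \to \Oh_{X_{n-1}} \to 0$ with $E$ yields
\[
0 \longrightarrow E \, |_{X_1} \otimes J^{n-1}/J^n \longrightarrow E \longrightarrow E \, |_{X_{n-1}} \longrightarrow 0,
\]
whose outer terms range over finite sets by the inductive hypothesis. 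The extension classes lie in
\[
\Ext^1_{X_n}\bigl(E \, |_{X_{n-1}},\, E \, |_{X_1} \otimes J^{n-1}/J^n\bigr) = H^1\bigl(X_1,\, \uEnd(E \, |_{X_1}) \otimes J^{n-1}/J^n\bigr),
\]
a finite group since $X_1$ is projective over the finite field $\F_q$ with coherent coefficients. The main obstacle is the reduced-to-normal step: the Quot scheme argument works only because the Hilbert polynomial of $E$ is forced to equal $r\chi(X,\Oh(n))$, so Theorem \ref{cd-t3} enters not as a consequence of finiteness but as an essential input needed to cut out a scheme of finite type.
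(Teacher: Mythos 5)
Your proposal follows essentially the same three-stage reduction (normal $\to$ reduced $\to$ arbitrary connected projective) as the paper, with the same key inputs: Langer's boundedness for normal varieties, the adjunction-plus-Quot-scheme argument for the reduced case, and the nilideal filtration with $\Ext^1$ finiteness for the general case. Your closing remark about the logical role of Theorem \ref{cd-t3} also matches the paper, where the Chern class vanishing is established for general complete connected schemes (via envelopes) before the Quot scheme step is run.
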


The category of numerically flat vector bundles on a complete, reduced connected $k$-scheme $X$ with a point $x \in X (k)$ is a neutral Tannakian category with the faithful fibre functor $E \mapsto E_x$, see \cite{L1} Proposition 5.5. Langer calls its Tannakian dual $\pi^S_1 (X,x)$ the $S$-fundamental group of $X$. It is defined for any perfect field $k$. There is also Nori's fundamental group $\pi^N_1 (X,x)$ which is the Tannakian dual of the category of essentially finite vector bundles on $X$, see \cite{N}. They can be characterized as those bundles which can be trivialised by a torsor on $X$ under a finite group scheme over $k$. The essentially finite bundles form a full subcategory of the numerically flat bundles and as pointed out in \cite{L1} Lemma 6.2 one obtains a faithfully flat homomorphism
\[
\pi^S_1 (X,x) \longrightarrow \pi^N_1 (X,x) \; ,
\]
In general it is not an isomorphism.

\begin{cor}
\label{cd-t5}
For a projective connected reduced scheme $X$ over $k = \F_q$ or $k = \oF_p$ and a point $x \in X (k)$, the natural morphism
\[
\pi^S_1 (X,x) \longrightarrow \pi^N_1 (X,x)
\]
is an isomorphism of group schemes over $k$.
\end{cor}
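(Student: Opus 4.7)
My plan is to argue via Tannakian duality. Since by \cite{L1} Lemma 6.2 the natural morphism $\pi^{S}_{1}(X,x) \to \pi^{N}_{1}(X,x)$ is faithfully flat, it is an isomorphism if and only if the inclusion of essentially finite bundles into numerically flat bundles is an equivalence of Tannakian categories; equivalently, I must show that every numerically flat bundle $E$ on $X$ is essentially finite. By the characterization recalled just before the corollary, this amounts to producing, for each such $E$, a torsor $P \to X$ under a finite group scheme over $k$ that trivializes $E$.

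Given a numerically flat $E$, the first step is to invoke Theorem \ref{cd-t2} to obtain a finite surjective morphism $\varphi = \pi \circ F^{\nu} : Y \to X$, with $\pi : Y \to X$ finite \'etale and $F^{\nu}$ an iterate of a $k$-linear Frobenius on $Y$, such that $\varphi^{*} E$ is a trivial bundle. After passing to a Galois closure, I may assume $\pi$ is Galois, so that $\pi : Y \to X$ is already a torsor under a finite \'etale group scheme $H$ over $k$. Hence the \'etale portion of the trivializing cover is, by construction, a torsor under a finite group scheme.

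The essential task is then to upgrade the composition $\varphi$ into a torsor under a finite group scheme $G$ over $k$ that trivializes $E$. I would build $G$ as an extension of the \'etale group $H$ by an infinitesimal finite group scheme arising from the Frobenius iterate $F^{\nu}$, and produce a $G$-torsor $\tilde{X} \to X$ as a fibered construction combining the $H$-torsor $\pi$ with the Frobenius trivialization data on $Y$. Once such a $G$-torsor is in hand, $E$ corresponds to a representation of $G$ and is therefore essentially finite by definition.

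The main obstacle is the construction of the infinitesimal part of $G$ for a possibly singular reduced projective scheme. In the smooth case this is classical via Frobenius kernels of suitable auxiliary algebraic groups, and the corollary is already known in that setting. To reduce the general case to the smooth one, I would use the envelope construction from the proof of Theorem \ref{cd-t3}: choose a proper surjection $g : X'' \to X$ with $X''$ a disjoint union of smooth projective normal varieties over $k$, apply the known smooth case to each component of $g^{*} E$, and descend essential finiteness back to $X$ through a Tannakian-categorical argument that exploits the faithfulness of pullback along the envelope, analogous to the injectivity of $g^{*}$ on $A^{*}(X)$ exploited in the proof of Theorem \ref{cd-t3}.
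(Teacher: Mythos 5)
Your proposal diverges substantially from the paper's proof, and unfortunately the central step is left as a genuine gap that the paper's approach is designed precisely to avoid.

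The key obstacle you correctly identify but do not resolve is how to promote the trivializing morphism $\varphi = \pi \circ F^{\nu}$ from Theorem \ref{cd-t2} into a torsor under a finite group scheme $G$. The Frobenius iterate $F^{\nu} : Y \to Y$ on a general reduced projective scheme is a finite flat morphism, but it is \emph{not} the quotient by a group scheme action, and there is no canonical infinitesimal group scheme attached to it. Your plan to realize $G$ as an extension of the \'etale Galois group $H$ by an infinitesimal group ``arising from $F^{\nu}$'' is exactly the missing content, and it cannot be carried out directly: producing such a $G$ together with a $G$-torsor that actually trivializes $E$ is equivalent to what needs to be proved. The paper sidesteps this entirely by working Tannakian-categorically on $X$ itself: since $X$ is connected, reduced and projective, the numerically flat bundles form a neutral Tannakian category (by \cite{L1} Prop.~5.5), so one gets for free the monodromy group scheme $G_E$ of $\langle E\rangle^{\otimes}$ and a $G_E$-torsor $P \to X$ trivializing $E$. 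The whole problem is then to show $G_E$ is \emph{finite}, and for this the crucial input is Theorem \ref{cd-t4} (finitely many isomorphism classes of numerically flat bundles over a finite field), which forces $F^{\nu*}E \cong F^{\mu*}E$ for some $\mu > \nu$; Subramanian's argument from \cite{Su} then gives finiteness of $G_E$. You invoke Theorem \ref{cd-t2}, but not Theorem \ref{cd-t4}, and the finiteness statement is what makes the argument close.

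Your proposed reduction to the smooth case via envelopes is also problematic on two counts. First, in characteristic $p$ the paper's envelope construction only produces a disjoint union of projective \emph{normal} varieties, not smooth ones; resolution of singularities is not available, so the smooth case is not actually reached this way. Second, even granting a smooth covering $g : X'' \to X$, essential finiteness does not descend along a proper non-finite surjection in any evident manner: the injectivity of $g^{*}$ on $A^{*}(X)$ lets one descend the \emph{vanishing of Chern classes} (which is what Theorem \ref{cd-t3} uses it for), but there is no analogous mechanism by which ``trivialized by a finite group-scheme torsor'' passes from $X''$ down to $X$. The Tannakian argument in the paper avoids this entirely by never leaving $X$.
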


\begin{rem}
For smooth projective varieties $X$ over $\oF_p$ the result was previously shown by Mehta, unpublished, see  \cite{L1} end of introduction.
\end{rem}

\begin{proof}
We have to show that every numerically flat vector bundle $E$ on $X$ is essentially finite. We use an argument from \cite{Su}. Consider the Tannakian subcategory $\langle E \rangle^{\otimes}$ generated by $E$ in the category of numerically flat vector bundles on $X$. The Tannakian dual of $\langle E \rangle^{\otimes}$ is the {\it monodromy group scheme} $G_E$ of $E$. Let $r$ be the rank of $E$. The $\GL_n$-torsor associated to $E$ allows a reduction of structure group to $G_E$. Thus we get a $G_E$-torsor $\pi : P \to X$ such that $\pi^* E$ is a trivial bundle. By Theorem \ref{cd-t4} there are only finitely many isomorphism classes of numerically flat vector bundles on $X$. Hence there are $\mu > \nu \ge 0$ with $F^{\nu *} E \cong F^{\mu *} E$. The same argument as in \cite{Su} now implies that $G_E$ is a finite group scheme over $k$.
\end{proof}
\section{Models} \label{sec:cd2}

For the theory of the \'etale parallel transport we require certain reduction properties for vector bundles on $p$-adic varieties. In order to formulate them we now discuss some fact about models of $p$-adic varieties and their covers. Let $R$ be a valuation ring of characteristic zero which is the filtered union of discrete valuation rings dominated by $R$. By $Q$ we denote its quotient field, and by $k$ the residue field. Then $\spec R$ has two points and hence the generic point $\spec Q$ is open in $\spec R$. 

\begin{defn} \label{cd-t6}
A proper pair $(\eX , U)$ over $R$ consists of the following data:\\
i) A model $\eX$ over $R$ i.e. an $R$-scheme which is flat, proper and of finite presentation.\\
ii) A non-empty open subscheme $U$ of $\eX_Q = \eX \otimes_R Q$ which is a smooth quasi-projective variety over $Q$.

A morphism between proper pairs $(\eX,U)$ and $(\eX', U')$ over $R$ is an $R$-morphism $\eX \rightarrow \eX'$ mapping $U$ to $U'$.

\end{defn}

Given a proper scheme $X$ over $Q$ we say that $\eX$ is a model of $X$ over $R$ if $\eX$ is a model over $R$ and there is an isomorphism $X \cong \eX \otimes_R Q$ over $Q$.

\begin{lemma} \label{cd-t7}
i) Let $\eX$ be a model over $R$ whose generic fibre $\eX_Q$ is geometrically connected. Then all geometric fibres of $\eX \to \spec R$ are geometrically connected.\\
ii) Let $\eX$ be a model over $R$ with integral generic fibre $\eX_Q$. Then $\eX$ is integral as well and hence $\eX_Q$ is dense in $\eX$.
\end{lemma}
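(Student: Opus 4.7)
My plan is to reduce both parts to the case where $R$ is a DVR by exploiting that $\eX$ is of finite presentation over the filtered union $R = \varinjlim R_\alpha$ of DVRs. Standard limit arguments give a DVR $R_0 \subset R$ and a proper flat model $\eX_0 / R_0$ with $\eX = \eX_0 \otimes_{R_0} R$. Enlarging $R_0$ if necessary, geometric connectedness (for (i)) or integrality (for (ii)) of the generic fibre already descends to $\eX_0 \otimes_{R_0} Q_0$: in characteristic zero $H^0(\eX_Q, \Oh)=Q$ forces $\dim_{Q_0} H^0(\eX_0 \otimes Q_0, \Oh)=1$ by base-change and rank comparison, and similarly for the other coherent conditions. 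The conclusions of both parts are preserved under the further base change $R_0 \to R$ (connectedness of a proper scheme over an algebraically closed field persists under field extensions; integrality and density of the generic fibre likewise). So I henceforth assume $R$ is a DVR, and for (i) even a complete DVR with algebraically closed residue field, obtained by further base change through $\widehat{R^{sh}}$.

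For part (i) in this reduced setting, I argue by contradiction. Suppose $\eX_{\bar k}=\eX_k$ is disconnected, giving a nontrivial decomposition $\eX_k = Y_1 \sqcup Y_2$. Since the formal completion $\widehat{\eX}$ of $\eX$ along $\eX_k$ has the same underlying topological space as $\eX_k$, this decomposition promotes to a splitting $\widehat{\eX} = \widehat{Y_1} \sqcup \widehat{Y_2}$ of formal schemes over $\spec R$. Grothendieck's existence theorem for proper formal schemes (EGA III, 5.1.4) algebraizes this to a decomposition $\eX = Z_1 \sqcup Z_2$ into proper closed subschemes. Each $Z_i$ inherits flatness over $R$ (being an open-and-closed subscheme of $\eX$), so its generic fibre is non-empty: otherwise all associated primes of $\Oh_{Z_i}$ would lie in the special fibre and contain the uniformizer $\pi$, contradicting $\pi$-torsion-freeness. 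Hence $\eX_Q = Z_{1,Q} \sqcup Z_{2,Q}$ is a disjoint union of two non-empty $Q$-schemes, contradicting the connectedness of $\eX_Q$.

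For part (ii), again after reducing to the DVR case, let $\pi$ be a uniformizer. The key observation is that flatness of $\eX$ over $R$ forbids associated primes supported in the special fibre: locally, if $\ep = \operatorname{Ann}(a) \ni \pi$ with $0 \neq a$, then $\pi a = 0$ violates $R$-torsion-freeness. Therefore every irreducible component of $\eX$ dominates $\spec R$ and is the schematic closure of an irreducible component of $\eX_Q$; integrality of $\eX_Q$ yields a unique such component, so $\eX$ is irreducible. Reducedness follows by the same token: the nilradical $\Nh \subset \Oh_\eX$ is supported on $\eX_k$ because $\eX_Q$ is reduced, so its local sections are $\pi$-power torsion, hence vanish by flatness. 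Thus $\eX$ is integral; its unique generic point lies over $\spec Q$, therefore sits in $\eX_Q$, and its closure is $\eX$, proving density.

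The main obstacle I expect is simply bookkeeping in the descent of (i), namely verifying that passing $R \rightsquigarrow R_0 \rightsquigarrow \widehat{R_0^{sh}}$ preserves both the flatness/proper/finite-presentation setup and the geometric-connectedness hypothesis on $\eX_Q$, so that Grothendieck's existence theorem is legitimately available. Once that is in place, the formal-algebraization step and the associated-primes arguments in (ii) are entirely routine.
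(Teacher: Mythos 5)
The paper dispatches both parts by citation---part (i) to EGA IV, Proposition (15.5.9) (a consequence of Stein factorization), and part (ii) to Liu's textbook (Chapter 4, Proposition 3.8)---whereas you give a self-contained argument, so the routes are genuinely different. Your reduction to the DVR case via spreading out is the right move and mirrors how the paper's references would ultimately apply. For (i), your idea of lifting the idempotent cutting out a component of $\eX_k$ through the formal completion and invoking Grothendieck's existence theorem to algebraize it is a valid alternative to Stein factorization; the contradiction at the end (a flat closed--open piece has nonempty generic fibre) is exactly right. For (ii), the torsion/associated-primes argument over a DVR is the standard one and matches the textbook proof in spirit.

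Two rough edges deserve attention. First, in the descent for (i) you justify that geometric connectedness of $\eX_{0,Q_0}$ is detected by $\dim_{Q_0} H^0(\eX_{0,Q_0},\Oh_{\eX_{0,Q_0}})=1$; this is false without a geometric reducedness hypothesis (take $\eX_Q=\Spec Q[x]/(x^2)$, which is geometrically connected, proper, flat of finite presentation over a DVR $R$, but has $\dim_Q H^0=2$). The correct and equally quick justification is that geometric connectedness descends along arbitrary field extensions: if $\eX_{0,Q_0}\otimes_{Q_0}\oQ$ is connected, then so is $\eX_{0,Q_0}\otimes_{Q_0}\oQ_0$, since the former maps onto the latter. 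Second, the ascent for (ii) from $R_0$ to $R$ is more than formal: integrality of $\eX_0$ alone does not survive the base change $R_0\to R$ (already $\Spec \eo_K\otimes_{\eo_K}\eo_L$ for a nontrivial unramified extension $L/K$ is disconnected). What does work is to note that $R_0\to R$ is flat, so schematic density of $\eX_{0,Q_0}$ in $\eX_0$ pulls back to schematic density of $\eX_Q$ in $\eX$; combined with the standing hypothesis that $\eX_Q$ itself is integral, this yields reducedness and irreducibility of $\eX$. In other words, you must re-use the hypothesis on $\eX_Q$, not just the conclusion about $\eX_0$, and this should be said explicitly. With those two corrections the proof is sound.
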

\begin{proof}
i) This is a special case of \cite{EGAIV} Proposition (15.5.9), which is a consequence of Stein factorization.
ii) A proof is given in \cite{Liu} Chapter 4, Proposition 3.8.
\end{proof}

\begin{lemma} \label{cd-t8}
Assume we are given the following data:\\
i) A finitely presented and proper morphism $\eX \to \spec R$,\\
ii) an open subscheme $\emptyset \neq U \subset \eX_Q$ which is smooth over $Q$, and \\
iii) a connected component $U_0$ of $U$ which is a smooth quasi-projective variety over $Q$.\\
Then there is a proper pair $(\eX_0 , U_0)$ over $R$ together with a finitely presented closed immersion $i : \eX_0 \hookrightarrow \eX$ with the following properties\\
a) $i^{-1} (U) = U \cap \eX_{0Q} = U_0$, and in particular the restriction of $i$ to a morphism $U_0 = i^{-1} (U) \to U$ is finite \'etale.\\
b) The generic fibre $\eX_{0Q}$ is a variety.\\
c) $\eX_0$ is an integral scheme.
\end{lemma}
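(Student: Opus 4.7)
The natural candidate for $\eX_0$ is the reduced scheme-theoretic closure of $U_0$ inside $\eX$, to be constructed in two stages. First, let $Z_0 \subseteq \eX_Q$ be the closure of $U_0$ with its reduced induced subscheme structure. Since $U_0$ is smooth and connected it is irreducible, so $Z_0$ is integral. Because $Q$ is perfect of characteristic zero, the base change $Z_0 \otimes_Q \oQ$ remains reduced; a dimension count shows that the integral open subscheme $U_0 \otimes_Q \oQ$ is still dense in $Z_0 \otimes_Q \oQ$, so the latter is integral, i.e.\ $Z_0$ is a variety over $Q$.

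Next take $\eX_0 \subseteq \eX$ to be the reduced closure of $Z_0$ in $\eX$, so that $\eX_{0,Q} = Z_0$, giving (b). For (a), the intersection $U \cap Z_0$ is open in the irreducible scheme $Z_0$, hence connected; since it contains $U_0$ and $U_0$ is a connected component of $U$, one must have $U \cap Z_0 = U_0$. The restriction $i: U_0 \to U$ is then the inclusion of a clopen subset and hence finite \'etale. Properness of $\eX_0$ over $R$ is automatic because $\eX_0$ is closed in the proper $R$-scheme $\eX$.

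The essential difficulty is to argue that the closed immersion $\eX_0 \hookrightarrow \eX$ is \emph{finitely presented}, since $R$ need not be Noetherian; once this is in place, integrality (c) and flatness of $\eX_0$ follow relatively cheaply. The plan is to descend to a Noetherian base. Since $R$ is the filtered union of dominated DVRs $R_\lambda$, the standard limit arguments of EGA~IV~\S8 produce, for some $\lambda$, a finitely presented proper $R_\lambda$-scheme $\eX_\lambda$ with $\eX \cong \eX_\lambda \otimes_{R_\lambda} R$; after enlarging $\lambda$, the quasi-compact open $U_0$ descends to a smooth geometrically integral open $U_{0,\lambda} \subseteq \eX_{\lambda,Q_\lambda}$. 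Carrying out the construction above over $R_\lambda$ yields a closed subscheme $\eX_{0,\lambda} \subseteq \eX_\lambda$ which is automatically finitely presented (as $\eX_\lambda$ is Noetherian), integral by construction, and flat over the DVR $R_\lambda$ because its structure sheaf is $R_\lambda$-torsion-free.

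Finally, $R_\lambda \to R$ is flat (an inclusion of DVRs inside a common fraction field is flat, and filtered colimits of flat ring maps are flat), so base change gives $\eX_0 \cong \eX_{0,\lambda} \otimes_{R_\lambda} R$. Finite presentation, properness, flatness and the identification of the generic fibre with $Z_0$ are all preserved. For the integrality of $\eX_0$: reducedness holds because $R$-flatness makes each stalk $\Oh_{\eX_0,x}$ inject into its localization at $R \setminus \{0\}$, which is an open piece of the geometrically reduced scheme $Z_0$; and irreducibility holds because $R$-flatness forces every irreducible component of $\eX_0$ to meet $Z_0$, while $Z_0$ is irreducible with a unique generic point.
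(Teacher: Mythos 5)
Your proof is correct and takes essentially the same approach as the paper's: construct $\eX_0$ as the reduced scheme-theoretic closure of $U_0$ (via its closure $Z_0$ in the generic fibre), deduce $U\cap\eX_{0Q}=U_0$ from irreducibility of that closure, and descend to a discrete valuation ring to obtain finite presentation and flatness. The paper phrases the identity $U\cap X_0=U_0$ via the observation that $U$ lies in the regular locus of $\eX_Q^{\red}$ while distinct irreducible components meet only in the singular locus, but this is equivalent to your connectedness argument.
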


\begin{rem}
The proposition applies in particular to proper pairs $(\eX , U)$ in which case $U = U_0$ but the more general version will also be needed twice later on.
\end{rem}

\begin{proof}
Using finite presentation and descent arguments together with Lemma \ref{cd-t7} ii) we may assume that $R$ is a discrete valuation ring. Let $X_0$ be the closure of $U_0$ in $X = \eX \otimes_R Q$, i.e. the irreducible component of $X$ containing $U_0$. We give $X_0$ the reduced subscheme structure. Since $U$ is contained in the regular locus of $X^{\red}$ and since the intersection of $X_0$ with another irreducible component of $X$ lies in the singular locus of $X$, it follows that we have $U \cap X_0 = U_0$. Since $U_0$ is a variety, it follows that $X_0$ is a variety as well. The closure $\eX_0$ of $X_0$ in $\eX$ with the reduced structure is a proper $R$-scheme with $\eX_0 \otimes_R Q = X_0$. Since $\eX_0$ is integral by Lemma \ref{cd-t7} ii), and the composition $\eX_0 \hookrightarrow \eX \to \spec R$ is surjective, the morphism $\eX_0 \to \spec R$ is flat.
\end{proof}

We often have to extend morphisms between varieties to models. This is always possible:

\begin{lemma} \label{cd-t9}
With $R$ and $Q$ as above, let $f_Q : Y \to X$ be a morphism of complete varieties over $Q$ and let $\eX$ be a model of $X$ over $R$. Then there exists a model $\Yh$ of $Y$ over $R$ and a proper morphism $f : \Yh \to \eX$ which extends $f_Q$. 
\end{lemma}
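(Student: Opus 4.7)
My plan is to realize $\Yh$ as the scheme-theoretic closure of the graph of $f_Q$ inside a suitable proper $R$-scheme that dominates both $Y$ and $\eX$. This is the classical graph-closure trick for extending rational maps.

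First I would produce, by any available means, an auxiliary proper flat $R$-model $\Yh_0$ of $Y$. Using the same finite presentation and descent reduction as in the proof of Lemma \ref{cd-t8} we may assume $R$ is a discrete valuation ring; then Nagata's compactification theorem, applied to a spread-out of $Y$ to a finite-type $R$-scheme with generic fiber $Y$, produces a proper $R$-scheme whose generic fiber contains $Y$. Taking the scheme-theoretic closure of $Y$ inside it yields a proper flat model $\Yh_0$ of $Y$ (torsion-freeness over a valuation ring is equivalent to flatness).

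Next I would form the product $\Yh_0 \times_R \eX$, which is proper over $R$ since both factors are. Its generic fiber equals $Y \times_Q X$ and is open in $\Yh_0 \times_R \eX$. The graph $\Gamma_{f_Q} \subset Y \times_Q X$ is closed, and first projection identifies it with $Y$. I now let $\Yh$ be the scheme-theoretic image of $\Gamma_{f_Q}$ in $\Yh_0 \times_R \eX$. Then $\Yh$ is a closed subscheme of a proper $R$-scheme, so it is proper over $R$; its generic fiber is obtained by intersecting with the open $Y \times_Q X$, recovering $\Gamma_{f_Q} \cong Y$; and flatness over the valuation ring $R$ follows because $\Yh$ is the scheme-theoretic closure of a subscheme of the generic fiber and hence has no $R$-torsion. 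Thus $\Yh$ is a model of $Y$ in the sense of Definition \ref{cd-t6}. The morphism $f : \Yh \to \eX$ is the composition of the inclusion $\Yh \hookrightarrow \Yh_0 \times_R \eX$ with the second projection; it is proper because $\Yh_0 \times_R \eX \to \eX$ is proper, and it restricts on the graph to $f_Q$, so it extends $f_Q$.

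The one point that needs care is the initial construction of $\Yh_0$ under the weak hypothesis that $R$ is only a filtered union of dominating discrete valuation rings (hence possibly non-Noetherian), which is why I first reduce to the DVR case by spreading out and descent, exactly as in the proof of Lemma \ref{cd-t8}. Everything else is formal once a proper flat model of $Y$ is available.
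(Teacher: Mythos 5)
Your proof is correct and arrives at the same destination by a genuinely different, somewhat more self-contained route. The paper also begins by reducing to the case of a discrete valuation ring and invoking Nagata compactification, but then it cites L\"utkebohmert's Lemma~2.2, which produces a blow-up $\Yh \to \Yh'$ (with generic fiber still $Y$) such that the rational map $\Yh' \dashrightarrow \eX$ becomes a genuine morphism; it then passes to the irreducible component of $\Yh^{\red}$ containing $Y$ to get integrality and hence flatness. You instead implement the underlying graph-closure construction directly: take the scheme-theoretic image of $\Gamma_{f_Q}$ in $\Yh_0 \times_R \eX$, read off properness from closedness in a proper $R$-scheme, and read off flatness from $R$-torsion-freeness of the structure sheaf (which is automatic because it injects into the pushforward of $\Oh_{\Gamma_{f_Q}}$, a sheaf of $Q$-algebras). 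The advantage of your version is that you avoid the appeal to the extension-by-blow-up lemma, at the modest cost of checking that scheme-theoretic image commutes with restriction to the generic fiber (which holds since, after the DVR reduction, everything is Noetherian and the immersion is quasi-compact). One small remark: your $\Yh$ is in fact automatically integral, since $\Gamma_{f_Q} \cong Y$ is integral and scheme-theoretic closure preserves this, so you also recover the integrality that the paper obtains by passing to an irreducible component.
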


\begin{proof}
After descent we may assume that $R$ is a discrete valuation ring. By Nagata's theorem \cite{Luet} Theorem 3.2, there is a proper $R$-scheme $\Yh'$ which contains $Y$ as an open dense subscheme. Since $Y$ is complete, it is open, closed and dense in $\Yh'_Q$ and hence $Y = \Yh'_Q$. By \cite{Luet} Lemma 2.2 there is a blow-up $\Yh$ of $\Yh'$ with $Y =  \Yh'_Q = \Yh_Q$ such that the morphism $f_Q : Y \to X$ extends to a (proper) morphism $f : \Yh \to \eX$. By passing to the irreducible component of $\Yh^{\red}$ containing $Y$ we may assume that $\Yh$ is integral. Since $\Yh$ is proper over $\spec R$ and since the image of $\Yh$ in $\spec R$ contains the generic point it follows that $\lambda : \Yh \to \spec R$ is surjective. Being integral, $\Yh$ is therefore flat over $R$ and hence $\Yh$ is a model of $Y$ over $R$. 
\end{proof}

\begin{defn} \label{cd-t10}
An $R$-scheme $\lambda : \Yh \to \spec R$ is called a good model over $R$ if the following conditions hold:\\
(i) The morphism $\lambda$ is flat, proper and finitely presented (i.e. $\Yh$ is a model).\\
(ii) The generic fibre $\Yh_Q = \Yh \otimes_R Q$ is a (complete) variety.\\
(iii) The special fibre $\Yh_k = \Yh \otimes_R k$ is geometrically reduced.\\
A good  model $\Yh$ is called very good if additionally we have \\
(iv) the morphism $\lambda$ is projective and $\Yh_Q$ is a smooth (projective) variety.
\end{defn}

\begin{prop} \label{cd-t11}
For a good model $\lambda : \Yh \to \spec R$ we have $\lambda_* \Oh_{\Yh} = \Oh_{\spec R}$ universally. The scheme $\Yh$ is integral and $\Yh_k$ is geometrically connected. 
\end{prop}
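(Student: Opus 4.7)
The plan is to reduce the statement to the case where $R$ is a discrete valuation ring, apply Stein factorization to compute $\lambda_*\Oh_\Yh$, and then upgrade equality to universality via Grothendieck's cohomology-and-base-change theorem. Integrality of $\Yh$ will be essentially immediate from the earlier lemmas.

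\textbf{Reduction and integrality.} Since $\lambda$ is of finite presentation and $R$ is the filtered union of DVRs dominated by $R$, standard limit/descent arguments produce a DVR $R_0 \subset R$ and a good model $\Yh_0$ over $R_0$ with $\Yh = \Yh_0 \otimes_{R_0} R$; the properties of being flat, proper, having geometrically integral generic fibre and geometrically reduced special fibre descend or are preserved on an open in $\spec R_0$, so one may arrange them on the nose. Since $\lambda_*\Oh_\Yh$ commutes with the flat base change $\spec R \to \spec R_0$, it is enough to prove the proposition over $R_0$. From now on I assume $R$ itself is a DVR. Integrality of $\Yh$ is then immediate from Lemma~\ref{cd-t7}(ii), because $\Yh_Q$ is a variety and hence integral.

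\textbf{Stein factorization.} I would Stein-factor the proper morphism $\lambda$ as $\Yh \xrightarrow{\,g\,} \spec A \to \spec R$ with $A := \lambda_*\Oh_\Yh$ a finite $R$-algebra and $g$ having geometrically connected fibres. Because $\Yh$ is integral, $\lambda_*\Oh_\Yh$ injects into the constant sheaf $K(\Yh)$, so $A$ is $R$-torsion-free; finite torsion-free modules over the DVR $R$ are free. Flat base change to the generic point gives
\[
A \otimes_R Q \;=\; H^0(\Yh_Q,\Oh_{\Yh_Q}) \;=\; Q,
\]
the last equality because $\Yh_Q$ is a geometrically integral complete $Q$-variety. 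Hence $A$ is free of rank one, so $A = R$ and $\lambda_*\Oh_\Yh = \Oh_{\spec R}$. In particular $g = \lambda$, so $\lambda$ itself has geometrically connected fibres; this yields the geometric connectedness of $\Yh_k$.

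\textbf{Universality.} To promote the equality $\lambda_*\Oh_\Yh = \Oh_{\spec R}$ to a universal statement, I would apply Grothendieck's cohomology-and-base-change theorem. The base change map $R \otimes_R k(s) \to H^0(\Yh_s,\Oh_{\Yh_s})$ is an isomorphism at the generic point by flat base change. At the closed point, $\Yh_k$ is geometrically reduced (by hypothesis) and geometrically connected (by the previous step), so $H^0(\Yh_k \otimes_k \bar k,\Oh)$ is a reduced connected finite $\bar k$-algebra, forcing it to equal $\bar k$; therefore $H^0(\Yh_k,\Oh_{\Yh_k}) = k$ and the base change map is an isomorphism at the special point as well. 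Since $R^0\lambda_*\Oh_\Yh = R$ is also locally free, the cohomology-and-base-change theorem (EGA III 7.7.5) implies that the formation of $\lambda_*\Oh_\Yh$ commutes with every base change $T \to \spec R$, which is precisely the universality claim.

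The steps I expect to be most delicate are the reduction to a DVR (one has to descend the geometric reducedness of the special fibre and the geometric integrality of the generic fibre to $R_0$, using that $\Yh$ is of finite presentation and spreading out the two open loci where these properties hold) and the verification that $H^0(\Yh_k,\Oh_{\Yh_k}) = k$; once those are in place, the proof is a clean application of Stein factorization and the base change theorem.
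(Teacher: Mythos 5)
Your reduction to a DVR, the identification of $\lambda_*\Oh_{\Yh}$ with a finite free rank-one $R$-module (via integrality of $\Yh$, torsion-freeness, and flat base change to the generic fibre), and the use of Lemma~\ref{cd-t7}(ii) for integrality are all sound and close in spirit to the paper. The gap is in the sentence ``$g = \lambda$, so $\lambda$ itself has geometrically connected fibres.'' Stein factorization gives only that the fibres of $g$ are \emph{connected}; geometric connectedness of the fibres does not follow from $g_*\Oh_{\Yh} = \Oh_{\spec R}$ alone. To upgrade connectedness of $\Yh_k$ to geometric connectedness one needs to know that $\lambda_*\Oh_{\Yh}$ commutes with base change --- which is precisely the universality you are trying to establish. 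Since you then feed ``$\Yh_k$ geometrically connected'' into the cohomology-and-base-change argument to verify surjectivity of $\phi^0$ at the closed point, the argument is circular: you use universality (through geometric connectedness) to prove universality.

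The paper breaks this circle by invoking EGA~III 7.8.6 up front: a proper, flat, finitely presented morphism whose fibres are \emph{geometrically reduced} automatically has $\lambda_*\Oh_{\Yh}$ compatible with arbitrary base change. That result takes the hypothesis ``$\Yh_k$ geometrically reduced'' (which is part of the definition of a good model and which your proof never really exploits beyond remarking it) as the essential input, yields universality directly, and only then deduces geometric connectedness of $\Yh_k$ as a consequence. Your proof could be repaired by replacing your appeal to Stein factorization and to EGA~III 7.7.5 with this single citation, after which the rank computation on the generic fibre and the geometric connectedness of the special fibre follow exactly as you wrote them.
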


\begin{proof}
Since $\lambda : \Yh \to \spec R$ is finitely presented, by a descent argument we may assume that $R$ is a discrete valuation ring. By assumption $\lambda$ is flat with geometrically reduced fibres. It follows from \cite{EGA3} Proposition 7.8.6 that the formation of $\lambda_* \Oh_{\Yh}$ commutes with arbitrary base change. It remains to show that $\lambda_* \Oh_{\Yh} = \Oh_{\spec R}$. Since $\lambda$ is proper, the sheaf $\lambda_* \Oh_{\Yh}$ is coherent and given by the finitely generated $R$-module $\Gamma (\spec R , \lambda_* \Oh_{\Yh}) = \Gamma (\Yh , \Oh)$. Note that  $\Yh$ is integral by Lemma \ref{cd-t7} ii), hence this module is torsion free. It is therefore free of finite rank since $R$ is a discrete valuation ring. Thus we have $\lambda_* \Oh_{\Yh} = \Oh^r_{\spec R}$ for some $r \ge 1$. It follows that $\Gamma (\Yh_{\oQu} , \Oh) = \oQu^r$ and hence $r = 1$, since by assumption $\Yh_{\oQu}$ is  integral and therefore connected and reduced. 
By  Lemma \ref{cd-t7} ii), $\Yh$ is integral, and since $\lambda_* \Oh_{\Yh} = \Oh_{\spec R}$ universally, its special fiber is geometrically connected.
\end{proof}

For our construction of the parallel transport we need to dominate proper pairs by good models.

\begin{notation} \label{cd-t12}
Let $\eo_K$ be a Henselian discrete valuation ring of characteristic zero, and let $\eo_{\oK}$ be the normalization of $\eo_K$ in an algebraic closure $\oK$ of $K$ together with the unique valuation prolonging the one on $\eo_K$.
\end{notation}

We then have the following useful fact:

\begin{lemma} \label{cd-t13}
For any proper pair $(\eX, U)$ over $\eo_{\oK}$ there is a very good model $\Yh$ over $\eo_{\oK}$ and a proper morphism $\pi : \Yh \to \eX$ whose restriction $\pi^{-1} (U) \iso U$ is an isomorphism. One can find such a model $\Yh$ of the form $\Yh = \Yh_1 \otimes_{\eo_L} \eo_{\oK}$ where $\Yh_1$ is a very good model over $\eo_L$ for some finite extension $L$ of $K$ which is a normal scheme. 
\end{lemma}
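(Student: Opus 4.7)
The strategy is to descend the data to a finite level, apply Hironaka's resolution to the generic fibre, build a projective integral model, and finally apply semistable reduction (or equivariant resolution) to the special fibre, before base changing back to $\eo_{\oK}$.

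First, since the proper morphism $\eX \to \spec \eo_{\oK}$ is of finite presentation and $\eo_{\oK}$ is the filtered colimit of the rings $\eo_L$ as $L$ runs over finite subextensions of $\oK/K$, standard limit arguments (\cite{EGAIV}, \S 8) produce a finite extension $L_0 / K$ and a proper pair $(\eX_0, U_0)$ over $\eo_{L_0}$ whose pullback to $\eo_{\oK}$ is $(\eX, U)$; enlarging $L_0$ if necessary we may take $U_0$ geometrically integral. Since $\eo_K$ is Henselian, $\eo_{L_0}$ is itself a discrete valuation ring, hence a normal scheme. Applying Lemma \ref{cd-t8} to the irreducible component of $\eX_{0,L_0}^{\red}$ containing $U_0$, we reduce further to the case where $\eX_{0,L_0}$ is itself a variety containing $U_0$ as a dense open.

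Second, Hironaka's resolution of singularities in characteristic zero yields a smooth projective variety $Y_0$ over $L_0$ and a proper birational morphism $Y_0 \to \eX_{0,L_0}$ that is an isomorphism over the smooth locus, hence over $U_0$. To extend this to the integral level, choose a projective embedding $Y_0 \hookrightarrow \Pa^N_{L_0}$ and form the scheme-theoretic closure in $\eX_0 \times_{\eo_{L_0}} \Pa^N_{\eo_{L_0}}$ of the graph of $Y_0 \to \eX_{0,L_0}$. This closure is an $\eo_{L_0}$-projective model $\Yh_0'$ of $Y_0$ equipped with a proper morphism $\Yh_0' \to \eX_0$ which is an isomorphism over $U_0$.

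Third, I would replace $L_0$ by a finite extension $L$ over which semistable reduction for $Y_0 \otimes_{L_0} L$ becomes available (e.g.\ via de Jong's semistable alteration theorem), and then apply an equivariant resolution of singularities to $\Yh_0' \otimes_{\eo_{L_0}} \eo_L$ with centres disjoint from $U_0 \otimes_{L_0} L$. This yields a projective model $\Yh_1$ over $\eo_L$ whose generic fibre is smooth projective, whose special fibre is semistable (in particular geometrically reduced), and which comes with a proper morphism $\Yh_1 \to \eX_0 \otimes_{\eo_{L_0}} \eo_L$ inducing an isomorphism over $U_0 \otimes_{L_0} L$. By construction $\Yh_1$ is a very good model, and setting $\Yh := \Yh_1 \otimes_{\eo_L} \eo_{\oK}$ with the induced morphism $\pi : \Yh \to \eX$ gives the statement.

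The main obstacle is the third step: simultaneously achieving geometric reducedness of the special fibre, projectivity, smoothness of the generic fibre, and preservation of the identification with $U$. This is precisely where the equivariant resolution of singularities flagged in the introduction is essential, since the modifications needed to reach semistable reduction must be performed coherently with the base change $\eo_{L_0} \to \eo_L$ and must be centred in the complement of $U_0$; keeping track of these compatibilities as one enlarges $L$ is the delicate part.
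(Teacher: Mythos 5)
Your first two steps are essentially sound and agree in spirit with the paper (the paper resolves over $\oK$ first and then descends, whereas you descend first and then resolve, but this is a harmless reorganization; note also that you should invoke Chow's lemma, as the paper does, to ensure that $\eX_{0,L_0}$ can be replaced by a projective variety before applying Hironaka as stated in Theorem~\ref{cd-t14}). The third step, however, has a genuine gap and would not go through as written.

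The tool you propose for achieving geometric reducedness of the special fibre is de Jong's semistable reduction theorem combined with ``equivariant resolution of singularities'' applied to the model over $\eo_L$. Neither ingredient does what is needed. De Jong's theorem produces an \emph{alteration}, i.e.\ a proper surjective morphism that is merely generically finite, not birational; the resulting map would at best be finite \'etale over $U_0$, not an isomorphism, which violates the conclusion of the lemma. And resolution of singularities (equivariant or not) is not available in mixed characteristic, so it cannot be applied to $\Yh_0' \otimes_{\eo_{L_0}} \eo_L$; the equivariant resolution mentioned in the introduction of the paper is the characteristic-zero statement, used later in Lemma~\ref{cd-t18} on the generic fibre, not on integral models. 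The paper instead invokes the Epp--Saito theorem (Theorem~\ref{cd-t16}): after a finite extension $L/K$, the \emph{normalization} of $\Yh_0'' \otimes_{\eo_K} \eo_L$ has geometrically reduced fibres. Normalization is finite and birational, so it preserves projectivity and, crucially, it is an isomorphism over the already-smooth generic fibre; hence the identification over $U$ is retained. This is precisely the ingredient that simultaneously delivers geometric reducedness of the special fibre, projectivity, and preservation of the isomorphism over $U$, which your proposed route cannot.
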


The proof is based on the following results:

\begin{theorem}[Hironaka] \label{cd-t14}
Let $X$ be a projective variety over an algebraically closed field of characteristic zero and let $U \subset X$ be a smooth open subvariety. Then there is a smooth projective variety $Y$ and a proper morphism $\pi : Y \to X$ such that the restriction $\pi^{-1} (U) \iso U$ is an isomorphism. 
\end{theorem}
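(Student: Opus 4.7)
The plan is to apply the strong (embedded) form of Hironaka's resolution of singularities in characteristic zero, together with the observation that the singular locus $X_{\mathrm{sing}} \subset X$ is disjoint from $U$ by hypothesis. More precisely, I would invoke the version of resolution which states that there exists a finite sequence of blow-ups
\[
Y = X_n \longrightarrow X_{n-1} \longrightarrow \cdots \longrightarrow X_1 \longrightarrow X_0 = X
\]
with each $X_{i+1} \to X_i$ the blow-up of $X_i$ along a smooth closed subscheme $Z_i \subset (X_i)_{\mathrm{sing}}$, such that $Y = X_n$ is smooth.

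The first key point is that at every stage the center $Z_i$ can be chosen to lie inside the singular locus of $X_i$. Inductively, the strict transform of $U$ in $X_i$ is disjoint from $(X_i)_{\mathrm{sing}}$: this holds for $i = 0$ by the assumption that $U$ is smooth, and it is preserved under blow-ups along centers contained in the singular locus, since blowing up a smooth variety along a smooth subvariety produces a smooth variety. Therefore each blow-up morphism $X_{i+1} \to X_i$ is an isomorphism over the preimage of $U$, and by composition the morphism $\pi : Y \to X$ restricts to an isomorphism $\pi^{-1}(U) \iso U$.

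For projectivity, I would use that the blow-up of a projective scheme along a closed subscheme is again projective (one glues together with a relatively ample line bundle, namely $\Oh(-E_i)$ for the exceptional divisor twisted by a pullback of a sufficiently ample line bundle from $X_i$). Hence each $X_{i+1}$ is projective, and in particular $Y$ is a smooth projective variety. Properness of $\pi$ is then immediate since both $Y$ and $X$ are projective over the base field.

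The main conceptual obstacle is of course the full strength of Hironaka's theorem itself, which is the input we are allowed to cite; the only genuine work in the statement as phrased here is to verify that the resolution can be arranged to leave $U$ untouched, and this is immediate from the fact that the centers of the blow-ups in Hironaka's algorithm are chosen inside the non-smooth locus (or, in the functorial versions due to Bierstone--Milman, Villamayor, Włodarczyk, inside the locus where the invariant controlling the resolution has not yet been reduced).
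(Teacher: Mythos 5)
The paper does not give a proof of this statement; it simply cites it as Hironaka's theorem (with the parenthetical remark that the canonical version of [BM] also yields an equivariant form). Your proposal is therefore not comparable to "the paper's proof" but rather supplies the derivation the authors left implicit, and it is correct as a deduction from the strong form of resolution of singularities in characteristic zero: the centers of the blow-ups can be taken inside the non-smooth locus, so the morphism is an isomorphism over the smooth locus, which contains $U$ by hypothesis; projectivity of each $X_{i+1}$ follows since the blow-up of a projective scheme along a closed subscheme is projective; and $Y$ remains integral (hence a variety) since each blow-up is along a nowhere-dense center. One cosmetic remark: since $U$ is open rather than closed, "strict transform of $U$" is not quite the right term -- you want the preimage (equivalently, total transform) of $U$, which you in fact use correctly later in the argument.
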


In \cite{BM} resolution of singularities has been made canonical. It follows that Theorem \ref{cd-t14} also holds in an equivariant setting where a finite group $G$ acts on $X$ preserving $U$. A resolution $\pi : Y \to X$ can then be found where $G$ acts on $Y$ and the morphism $\pi$ is $G$-equivariant. We will need this later on.

Next we need a version of Chow's lemma with control on the center of blow up which follows from \cite{R}, Section 4,  Proposition 5.

\begin{theorem}[Chow's lemma] \label{cd-t15}
Let $S$ be a Noetherian scheme, $\eX$ a proper $S$-scheme and $U \subset \eX$ an open subscheme which is quasi-projective over $S$. Then there is a projective $S$-scheme $\eX'$ with an $S$-morphism $\pi : \eX' \to \eX$ such that the restriction $\pi^{-1} (U) \iso U$ is an isomorphism. 
\end{theorem}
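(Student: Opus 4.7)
The plan is to adapt the classical graph-closure proof of Chow's lemma so that the projective modification becomes an isomorphism precisely over the specified quasi-projective open $U$, rather than merely over some unspecified dense open as in the basic version.

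I would begin by constructing a finite open covering $\{V_i\}_{i=0}^{n}$ of $\eX$ by quasi-projective $S$-subschemes with $V_0 = U$ and with the additional property that $U \subseteq V_i$ for every $i$. The enlargement needed to achieve this refinement is the technical heart of the statement and is the content of \cite{R}, Section 4, Proposition 5: one starts from any quasi-projective open covering of $\eX \setminus U$, enlarges each member by $U$, and verifies that the result remains quasi-projective over $S$ (using crucially that $U$ itself is quasi-projective). With such a cover in hand, the common refinement $V := \bigcap_i V_i$ equals $U$ exactly.

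Next I would fix open immersions $V_i \hookrightarrow P_i$ into projective $S$-schemes and form the scheme-theoretic closure $\eX'$ in $\eX \times_S P_0 \times_S \cdots \times_S P_n$ of the image of the locally closed immersion
\[
V \longrightarrow \eX \times_S P_0 \times_S \cdots \times_S P_n, \qquad v \longmapsto (v, v, \ldots, v),
\]
where the $i$-th factor of the target uses the composition $V \subseteq V_i \hookrightarrow P_i$. Following the classical argument (as in EGA II, 5.6), the projection $\eX' \to P_0 \times_S \cdots \times_S P_n$ is a closed immersion, so $\eX'$ is projective over $S$, while the first projection $\pi \colon \eX' \to \eX$ is proper and surjective onto the closure of $V$, which equals $\eX$.

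Finally I would verify that $\pi$ is an isomorphism over $U$. Since $V = U$ and $U \subseteq V_i$ for every $i$, the restriction of the diagonal immersion to $U$ has image already closed in $U \times_S P_0 \times_S \cdots \times_S P_n$: it is cut out by the closed conditions that each $P_i$-coordinate coincide with the image of the $\eX$-coordinate under the open immersion $V_i \hookrightarrow P_i$. Hence $\pi^{-1}(U)$ equals this graph and the first projection restricts to an isomorphism onto $U$. The principal obstacle is the first step, namely arranging the covering so that every member contains $U$ while remaining quasi-projective over $S$; this is precisely where the refined Chow's lemma goes beyond the classical version and where the analysis of \cite{R} is indispensable.
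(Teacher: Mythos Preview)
The paper does not prove this theorem; it is stated as a consequence of \cite{R}, Section~4, Proposition~5, and used thereafter as a black box. So there is no argument in the paper to compare your sketch against.

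Your outline follows the classical graph-closure proof and correctly isolates where the refinement must enter: one needs a finite cover of $\eX$ by quasi-projective $S$-opens whose common intersection is exactly $U$; granting that, your steps 2 and 3 are the standard argument and are fine. The problem is your proposed construction of that cover. Taking a quasi-projective open cover and ``enlarging each member by $U$'' does not work in general: the union of two quasi-projective open subschemes of a proper $S$-scheme need not be quasi-projective over $S$ --- Hironaka's complete non-projective threefold is exactly such a union. You defer this step to \cite{R}, but Raynaud's Proposition~5 is the full refined Chow's lemma itself, not an auxiliary lemma about unions of quasi-projective opens, so invoking it for step~1 amounts to invoking the theorem you are proving. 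In short, you have correctly located the difficulty but not resolved it: the ``enlarge by $U$'' mechanism fails, and the citation you offer for the fix is the target statement.
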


Finally we require a version of Epp's theorem due to T. Saito \cite{Sa} Corollary 1.1.5.

\begin{theorem}[Epp--Saito] \label{cd-t16}
Let $S = \spec \eo_K$ be the spectrum of a discrete valuation ring and $f : \eX \to S$ a normal scheme of finite type with smooth generic fibre. Then there exists a surjection of spectra $S' = \spec \eo_{K'} \to S$ of discrete valuation rings such that $K'$ is a finite extension of $K$ and such that the normalization $\eX'$ of $\eX \times_S S'$ has geometrically reduced fibres over $S'$.
\end{theorem}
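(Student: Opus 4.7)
The plan is to reduce to working over a discrete valuation ring $\eo_{L_0}$ for some finite extension $L_0/K$, carry out all modifications there, and base-change back to $\eo_{\oK}$ at the end. Since $\eo_{\oK}$ is the filtered colimit of the rings $\eo_{L'}$ for $L'$ running over finite subextensions of $\oK/K$, and since $\eX \to \spec \eo_{\oK}$ is of finite presentation with smooth quasi-projective generic fibre $U$, standard spreading-out descent produces a proper pair $(\eX_0, U_0)$ over some $\eo_{L_0}$ with $(\eX_0, U_0) \otimes_{\eo_{L_0}} \eo_{\oK} = (\eX, U)$. Because $\eo_K$ is Henselian, so is $\eo_{L_0}$; in particular it is a normal DVR.

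Next, Chow's lemma (Theorem \ref{cd-t15}) applied with the quasi-projective open $U_0 \subset \eX_0$ gives a projective $\eo_{L_0}$-model $\eX_0' \to \eX_0$ which is an isomorphism over $U_0$. Then Lemma \ref{cd-t8}, invoked with $U = U_0$ (the ``proper pair'' case mentioned in the remark following that lemma), cuts out an integral projective model $\eX_1 \hookrightarrow \eX_0'$ whose generic fibre is a projective $L_0$-variety containing $U_0$, with the composed map $\eX_1 \to \eX_0$ still an isomorphism over $U_0$.

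The key technical step is to achieve simultaneously a smooth projective generic fibre and a geometrically reduced special fibre while preserving the isomorphism over $U_0$. I would first apply equivariant/canonical resolution of singularities --- Theorem \ref{cd-t14} together with the Galois-equivariance noted in the remark after it, which permits descent of Hironaka's resolution from $\overline{L_0}$ to $L_0$ --- to the generic fibre of $\eX_1$ while leaving $U_0$ untouched. This yields a smooth projective $L_0$-variety $Y$ with a proper birational morphism $Y \to (\eX_1)_{L_0}$ that is an isomorphism over $U_0$. Lemma \ref{cd-t9}, combined with one further application of Chow's lemma with centre disjoint from $U_0$, extends $Y \to (\eX_1)_{L_0}$ to a projective model $\Zh \to \eX_1$ over $\eo_{L_0}$ with smooth projective generic fibre; replacing $\Zh$ by its normalization keeps it projective (normalization is finite) and preserves both the smooth generic fibre and the isomorphism over $U_0$, since smoothness implies normality.

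Finally, the Epp--Saito theorem (Theorem \ref{cd-t16}) furnishes a finite extension $L/L_0$ such that the normalization $\Yh_1$ of $\Zh \otimes_{\eo_{L_0}} \eo_L$ has geometrically reduced fibres over $\eo_L$. Then $\Yh_1$ is normal by construction, projective (finite over a projective scheme), and its generic fibre equals $Y \otimes_{L_0} L$ --- smooth over $L$, hence normal, so the normalization changes nothing on generic fibres --- and is therefore smooth projective, while the induced map $\Yh_1 \to \eX_0 \otimes_{\eo_{L_0}} \eo_L$ remains an isomorphism over $U_0 \otimes_{L_0} L$. Base-changing along $\eo_L \to \eo_{\oK}$ then produces the desired $\Yh = \Yh_1 \otimes_{\eo_L} \eo_{\oK}$. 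The main obstacle throughout is keeping the morphism an isomorphism over $U$ across all the birational surgery; this is exactly what forces the use of Chow's lemma and of Hironaka's theorem in their ``control-on-centre'' forms and succeeds because $U$ is smooth, hence normal, at every intermediate stage.
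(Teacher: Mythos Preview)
Your proposal is not a proof of Theorem~\ref{cd-t16} at all: you have written an argument for Lemma~\ref{cd-t13} (producing a very good model dominating a given proper pair over $\eo_{\oK}$). The statement you were asked to prove is the Epp--Saito theorem itself, which concerns a normal finite-type scheme $\eX$ over a discrete valuation ring $\eo_K$ and asserts the existence of a finite extension $K'/K$ such that the normalization of the base change has geometrically reduced fibres. Your text instead starts from a proper pair $(\eX,U)$ over $\eo_{\oK}$, descends, applies Chow's lemma and resolution, and then \emph{invokes} Theorem~\ref{cd-t16} as a black box (``Finally, the Epp--Saito theorem (Theorem~\ref{cd-t16}) furnishes a finite extension $L/L_0$ \ldots''). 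As a proof of Theorem~\ref{cd-t16} this is circular.

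In the paper, Theorem~\ref{cd-t16} is not proved at all: it is simply quoted from Saito \cite{Sa}, Corollary~1.1.5, as an input to the proof of Lemma~\ref{cd-t13}. Your write-up is in fact a reasonable sketch of the paper's proof of Lemma~\ref{cd-t13} (and tracks that proof closely: descent to a DVR, Chow's lemma to get projectivity, Hironaka to smooth the generic fibre, extend to a model via Lemma~\ref{cd-t9}, normalize, and then apply Epp--Saito). If the intended target was Lemma~\ref{cd-t13}, say so and the proposal is essentially correct and matches the paper's approach; if the target really is Theorem~\ref{cd-t16}, you have not addressed it.
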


\begin{proofof}
{\bf Lemma  \ref{cd-t13}} By Lemma \ref{cd-t8} we may assume that $X = \eX \otimes_{\eo_{\oK}} \oK$ is a variety. Using Theorem \ref{cd-t15} we find a projective variety $Y$ over $\oK$ with a morphism $\pi : Y \to X$ which restricts to an isomorphism $\pi^{-1} (U) \iso U$. By Hironaka's theorem we may assume that in addition $Y$ is smooth. Using finite presentation and possibly replacing $\spec \eo_K$ by a finite extension we may assume that the proper pair $(\eX , U)$ is the base extension of a proper pair $(\eX_0 ,U_0)$ over $\spec \eo_K$. Moreover we may assume that $Y = Y_0 \otimes_K \oK$ and $\pi = \pi_0 \otimes_K \oK$ for a smooth projective variety $Y_0 / K$ and a morphism $\pi_0 : Y_0 \to X_0$ over $\spec K$ which restricts to an isomorphism $\pi^{-1}_0 (U_0) \to U_0$. By Lemma \ref{cd-t9} there is a model $\Yh'_0$ of $Y_0$ and a morphism $\Yh'_0 \to \eX_0$ extending $\pi_0$. Using Chow's lemma alias Theorem \ref{cd-t15} we find a projective scheme $\Yh''_0$ over $\spec \eo_K$ together with a morphism $\Yh''_0 \to \Yh'_0$ which is an isomorphism on the generic fibres, i.e. $\Yh''_0 \otimes_{\eo_K} K = Y_0$. Taking the irreducible component of $\Yh''_0$ containing $Y_0$ we may assume that $\Yh''_0$ is integral. Replacing $\Yh''_0$ by its normalization (which is again projective) in the function field of $\Yh''_0 \otimes K = Y_0$ we may assume that in addition $\Yh''_0$ is normal. By Theorem \ref{cd-t16} there is a finite extension $L$ of $K$ in $\oK$ such that the normalization $\Yh_1$ of $\Yh''_0 \otimes_{\eo_K} \eo_L$ has geometrically reduced fibres over $\spec \eo_L$. Since $\Yh''_0 \otimes_{\eo_K} \eo_L$ is projective over $\eo_L$ and the normalization $\Yh_1$ is finite over $\Yh''_0 \otimes_{\eo_K} \eo_L$, it follows that $\Yh_1$ is projective over $\eo_L$. Moreover $\Yh_1$ is flat over $\spec \eo_L$ since $\Yh_1 \to \spec \eo_L$ is surjective and $\Yh_1$ is integral. Hence $\Yh_1$ is a very good model over $\eo_L$ and $\Yh = \Yh_1 \otimes_{\eo_L} \eo_{\oK}$ is a very good model over $\eo_{\oK}$. By construction the composition
\[
\pi : \Yh \longrightarrow \Yh''_0 \otimes_{\eo_K} \eo_{\oK} \longrightarrow \Yh'_0 \otimes_{\eo_K} \eo_{\oK} \longrightarrow \eX_0 \otimes_{\eo_K} \eo_{\oK} = \eX
\]
induces an isomorphism $\pi^{-1} (U) \to U$.
\end{proofof}

The good models will be used later to construct the parallel transport. For proving independence of the choice of models we will need the following result.

\begin{lemma} \label{cd-t17}
Consider $\eo_{\oK}$ as in Notation \ref{cd-t12}, and let $(\eX , U)$ be a proper pair over $\eo_{\oK}$. Assume that we are given proper morphisms $\pi_i : \Yh_i \to \eX$ with $\pi^{-1}_i (U) \to U$ finite, \'etale  and surjective for $1 \le i \le r$ where $r \ge 1$. Then there exists a very good model $\Yh$ over $\eo_{\oK}$ and a proper morphism $\pi : \Yh \to \eX$ with the following properties for $1 \le i \le r$:\\
(i) $\pi$ factors as a composition $\pi : \Yh \xrightarrow{\rho_i} \Yh_i \xrightarrow{\pi_i} \eX$.\\
(ii) $\pi^{-1} (U) \neq  \emptyset$ and the restriction of each $\rho_i$ to a morphism $\pi^{-1} (U) \to \pi^{-1}_i (U)$ is finite \'etale. 
In particular $\pi^{-1} (U) \to U$ is finite \'etale surjective.
\end{lemma}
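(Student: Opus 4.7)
The plan is to form a fiber product of the $\Yh_i$ over $\eX$, extract a suitable connected component over $U$, and apply Lemma~\ref{cd-t13} to the resulting proper pair in order to reach a very good model.

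First, I would form the $\eX$-scheme $\Zh := \Yh_1 \times_\eX \cdots \times_\eX \Yh_r$ with its natural projections $q_i : \Zh \to \Yh_i$ and structure morphism $q := \pi_i \circ q_i : \Zh \to \eX$, which is proper since each $\pi_i$ is. Over $U$, the preimage $\Zh_U := q^{-1}(U) = \pi_1^{-1}(U) \times_U \cdots \times_U \pi_r^{-1}(U)$ is a fiber product of finite \'etale surjective covers of $U$, hence itself finite \'etale and surjective over $U$. Since $U$ is a smooth quasi-projective variety over the algebraically closed field $\oK$, it is geometrically connected, so any connected component $V$ of $\Zh_U$ maps surjectively onto $U$. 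Moreover $V \to U$ is finite \'etale, so $V$ is smooth over $\oK$ and is quasi-projective over $\oK$ (finite covers preserve quasi-projectivity, and open-and-closed subschemes of quasi-projective schemes remain so). Being smooth and connected over an algebraically closed field, $V$ is geometrically integral, hence a variety.

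Next, I would apply Lemma~\ref{cd-t8} to the proper $\eo_{\oK}$-scheme $\Zh$, the open subscheme $\Zh_U$ of its generic fibre, and the connected component $V$. This yields a proper pair $(\Zh_0, V)$ over $\eo_{\oK}$ together with a closed immersion $j : \Zh_0 \hookrightarrow \Zh$ such that $j^{-1}(\Zh_U) = V$. Then Lemma~\ref{cd-t13} applied to $(\Zh_0, V)$ produces a very good model $\Yh$ over $\eo_{\oK}$ (indeed one defined over some finite normal extension of $K$) together with a proper morphism $\sigma : \Yh \to \Zh_0$ whose restriction $\sigma^{-1}(V) \to V$ is an isomorphism. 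I then set $\pi := q \circ j \circ \sigma : \Yh \to \eX$ and $\rho_i := q_i \circ j \circ \sigma : \Yh \to \Yh_i$; the factorization $\pi = \pi_i \circ \rho_i$ is built into the fiber product.

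Verification of (ii) is then routine: $\pi^{-1}(U) = \sigma^{-1}(j^{-1}(\Zh_U)) = \sigma^{-1}(V) \iso V$ is non-empty, and the restriction $V \to \pi_i^{-1}(U)$ of $\rho_i$ sits in a commutative triangle with the two finite \'etale maps to $U$, hence is itself finite \'etale. Surjectivity of $\pi^{-1}(U) \to U$ is the surjectivity of $V \to U$ already observed. The main obstacle, such as it is, is confirming that the connected component $V$ of $\Zh_U$ is a smooth quasi-projective variety in the sense required by Lemmas~\ref{cd-t8} and~\ref{cd-t13}; this is automatic here precisely because we work over the algebraically closed field $\oK$, so connected components of the \'etale cover $\Zh_U \to U$ are already geometrically integral and surject onto the connected base $U$. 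Any attempt to carry out the same construction before passing to $\eo_{\oK}$ would require tracking components that need not be geometrically integral.
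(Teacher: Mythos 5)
Your proof is essentially the same as the paper's: form the fiber product over $\eX$, pick a connected component of the preimage of $U$, and pass to a very good model via Lemma~\ref{cd-t13}. The one difference is cosmetic: you explicitly interpose Lemma~\ref{cd-t8} to produce a proper pair $(\Zh_0,V)$ before invoking Lemma~\ref{cd-t13}, whereas the paper applies Lemma~\ref{cd-t13} directly to $(\Yh',U')$ (slightly loosely, since $\Yh'$ need not be flat over $\eo_{\oK}$; the proof of Lemma~\ref{cd-t13} begins by invoking Lemma~\ref{cd-t8} anyway, so the two routes collapse to the same argument). Your extra step makes the hypothesis-matching tidier; both are correct.
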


\begin{proof}
Set $\Yh' = \Yh_1 \times_{\eX} \ldots \times_{\eX} \Yh_r$ with the canonical proper morphism $\pi' : \Yh' \to \eX$. We have $\pi^{'-1} (U) = \pi^{-1}_1 (U) \times_U \ldots \times_U \pi^{-1}_r (U)$. By assumption, $\pi^{-1}_i (U) \to U$ is surjective for all $i$ and hence $\pi^{'-1} (U) \neq \emptyset$. Moreover the morphism $\pi^{'-1} (U) \to U$ is finite \'etale. Let $U'$ be a connected component of $\pi^{'-1} (U)$. Then $U'$ is a smooth quasi-projective variety over $\oK$. Applying Lemma \ref{cd-t13} to the proper pair $(\Yh' , U')$ we find a very good model $\Yh$ over $\eo_{\oK}$ with a proper morphism $\rho : \Yh \to \Yh'$ such that $\rho^{-1} (U') \iso U'$ is an isomorphism. The inverse images under $\rho$ of the other connected components of $\pi^{'-1} (U)$ are empty since $\Yh_{\oK}$ is irreducible. Hence we have $\rho^{-1} (\pi^{'-1} (U)) = \rho^{-1} (U')$. Consider the compositions $\pi : \Yh \xrightarrow{\rho} \Yh' \xrightarrow{\pi'} \eX$ and $\rho_i : \Yh \xrightarrow{\rho} \Yh' \xrightarrow{\pr_i} \Yh_i$. Properties (i) and (ii) now follow from the construction. 
\end{proof}

For proving that the parallel transport is well defined we will later need an equivariant version of Lemma \ref{cd-t17} in the case where $r = 1$.

\begin{lemma} \label{cd-t18}
Let $(\eX , U)$ be a proper pair over $\eo_{\oK}$ as in Lemma \ref{cd-t17}. Assume that we are given a proper morphism $\pi' : \Yh' \to \eX$ with $V' := \pi^{'-1} (U) \to U$ finite \'etale surjective. Then there exist a finite group $G$, a good model $\Yh$ over $\eo_{\oK}$ with a $G$-action and a $G$-equivariant proper morphism $\pi : \Yh \to \eX$ where $G$ acts trivially on $\eX$ such that the following conditions hold:\\
(i) $\pi$ factors as a composition $\pi : \Yh \xrightarrow{\rho} \Yh' \xrightarrow{\pi'} \eX$.\\
(ii) $V = \rho^{-1} (V')$ is non-empty and the morphism $V \to V'$ and hence $V \to U$ are finite and \'etale. Moreover the morphism $G \times V \to V \times_U V , (g ,v) \mapsto (v , gv)$ is an isomorphism. In particular $G$ acts simply transitively on the geometric fibres of $\pi \, |_V$ over $U$.
\end{lemma}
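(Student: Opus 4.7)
The plan is to construct a Galois closure of the finite étale cover $V' \to U$ and then lift it to a $G$-equivariant good model dominating $\Yh'$. Let $n$ be the degree of $V' \to U$ and form the $n$-fold fibre product $P := V' \times_U \cdots \times_U V'$ with its natural $S_n$-action by permutation of factors. The open subscheme $W \subset P$ obtained by deleting all diagonals is finite étale over $U$ of degree $n!$, and the $S_n$-action makes $W$ into an $S_n$-torsor over $U$. Pick a connected component $\tV$ of $W$ and set $G := \{g \in S_n : g(\tV) = \tV\}$. Since $\tV$ is étale over a variety, it is regular and connected, hence integral; moreover $\tV \to U$ is a $G$-torsor, the first-coordinate projection $\tV \hookrightarrow P \to V'$ is finite étale (being a morphism between two finite étale covers of $U$), and $\tV$ is both open \emph{and} closed in $P$ (as a connected component of $W$, which is open and closed in $P$).

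Next I lift this to the model level. Set $\Wh := \Yh' \times_{\eX} \cdots \times_{\eX} \Yh'$ ($n$ factors); this is proper over $\eX$ with a natural $S_n$-action restricting over $U$ to the action on $P$. Let $\Zh$ be the scheme-theoretic closure of $\tV$ in $\Wh$, with its reduced structure. Since $G$ preserves $\tV$ and acts on $\Wh$, it acts on $\Zh$. Being the closure of an irreducible scheme, $\Zh$ is integral and dominates $\spec\eo_{\oK}$, hence is flat over $\eo_{\oK}$ (by torsion-freeness over a valuation ring) and so is a model. Because $\tV$ is closed in the open subscheme $P \subset \Wh_{\oK}$, the intersection of $\Zh$ with the preimage of $U$ in $\Wh$ equals $\tV$. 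The composition $\Zh \hookrightarrow \Wh \xrightarrow{\pr_1} \Yh' \xrightarrow{\pi'} \eX$ is then a proper $G$-equivariant morphism (with $G$ acting trivially on $\eX$), and the preimage of $U$ under it is precisely $\tV$.

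I now invoke a $G$-equivariant version of Lemma \ref{cd-t13} applied to the proper pair $(\Zh, \tV)$ to obtain a very good model $\Yh$ over $\eo_{\oK}$ with a $G$-action together with a $G$-equivariant proper morphism $\rho_0 : \Yh \to \Zh$ restricting to an isomorphism $\rho_0^{-1}(\tV) \iso \tV$. Taking $\pi$ to be the composition $\Yh \xrightarrow{\rho_0} \Zh \to \eX$ and $\rho$ to be $\Yh \xrightarrow{\rho_0} \Zh \hookrightarrow \Wh \xrightarrow{\pr_1} \Yh'$ gives property (i). For (ii), the chain of equalities $\rho^{-1}(V') = \pi^{-1}(U) = \rho_0^{-1}(\tV) \iso \tV$ identifies $V$ as a $G$-scheme with the $G$-torsor $\tV$ over $U$, so that finiteness and étaleness of $V \to V'$, as well as the torsor identity $G \times V \iso V \times_U V$, follow from the Galois closure construction.

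The main technical obstacle is setting up the $G$-equivariant variant of Lemma \ref{cd-t13}. Its proof combines Hironaka's theorem (which is $G$-equivariant by the canonical resolution of Bierstone--Milman, as noted in the remark after Theorem \ref{cd-t14}), Chow's lemma (which can be made $G$-equivariant by applying the classical result to the quotient $\Zh / G$ and pulling back, or alternatively by replacing the blow-up center by its $G$-orbit), and the Epp--Saito theorem (whose ingredients, namely normalization and base change to a finite extension, commute with the $G$-action on a scheme over a fixed base and are therefore automatically $G$-equivariant). Careful verification of each of these equivariance claims is the delicate technical content of the argument.
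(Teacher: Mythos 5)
Your overall strategy — form a Galois closure $\tV \to U$ of $V' \to U$, lift it to a $G$-equivariant model, then appeal to Lemma \ref{cd-t13} — is the same route the paper takes, and your construction of $\tV$ and $G$ via the permutation representation on $V'\times_U\cdots\times_U V'$ is correct (including the observation that $W$ is open and closed in $P$ because the diagonal of the étale separated map $V'\to U$ is open and closed). Likewise, taking $\Zh$ to be the closure of $\tV$ in $\Wh=\Yh'\times_\eX\cdots\times_\eX\Yh'$ gives a $G$-stable proper model with $\Zh\cap\pi^{-1}(U)=\tV$ as you claim.

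The gap is the appeal to a ``$G$-equivariant version of Lemma \ref{cd-t13}.'' That lemma's proof uses Chow's lemma (twice), Hironaka, Lemma \ref{cd-t9} (Nagata compactification plus extending a morphism by blowing up), normalization, and Epp--Saito. You address Hironaka (Bierstone--Milman is canonical, hence equivariant, as the paper remarks) and Epp--Saito (normalization and base change are functorial). But your two proposals for equivariant Chow's lemma both have real problems: the quotient $\Zh/G$ need not exist as a scheme when $\Zh$ is merely proper over $\eo_K$ rather than quasi-projective (one needs orbits in invariant affine opens), and even when it exists one must still check that the pullback of the Chow cover to $\Zh$ is projective over the base and has the right generic-fibre behaviour; the ``replace the centre by its $G$-orbit'' suggestion does not obviously keep the result projective over $S$ since blow-ups of non-projective schemes need not be projective. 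You also do not address equivariance of the Nagata compactification step hidden in Lemma \ref{cd-t9}. These are precisely the difficulties the paper's proof is engineered to avoid: instead of equivariantizing Lemma \ref{cd-t13} wholesale, the paper first applies (non-equivariant) resolution/compactification to get a projective $Y$ and a model $\Zh$, and then manufactures a $G$-equivariant projective object by taking relative normalizations $Z_g$ (resp.\ $\Zh_g$) of $Z$ (resp.\ $\Zh$) in $V$ (resp.\ $Y$) along $g^{-1}$, letting $G$ permute the factors of $\prod_{g\in G}Z_g$ (resp.\ $\prod_g\Zh_g$) by the twisted formula \eqref{eq:2,5}, and taking the closure of the diagonal. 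That ``Galois-closure-of-schemes'' device, which produces a projective $G$-scheme out of a non-equivariant one, is the key technical content missing from your argument. Without it, or a rigorous substitute such as the fibre-product-over-$\Zh$-of-$G$-conjugates trick (which does give projectivity over $S$ as a closed subscheme of $\prod_{g,\text{over }S}$, but which you would need to spell out), the proof is incomplete.
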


\begin{proof}
Covering $\Yh'$ by a very good model as in Lemma \ref{cd-t17} for $r = 1$ (or using Lemma \ref{cd-t13}), we may assume that $\Yh'$ is a very good model over $\eo_{\oK}$ and in particular that $\Yh'_{\oK}$ is projective. Choose a finite \'etale morphism of smooth quasi-projective varieties $V \to U$ which factors as $V \to V' \xrightarrow{\pi'} U$ and which is Galois with group $G$ as in (2). Replacing $K$ by a finite extension in $\oK$ we may assume that everything descends to $\eo_K$ or $K$, respectively. Abusing notation, we denote the descended objects by the same letters as before. Next we construct a projective $G$-variety $Y''$ over $Y' = \Yh_K$ containing $V$: Let $Z$ be the relative normalization of $Y'$ in $V$ via the morphism $V \to V' \hookrightarrow Y'$. Then $Z$ is finite over $Y'$ by \cite{stacks} Lemma 28.50.15, and since $Y'$ is projective, the normal variety $Z$ is projective as well. We have a commutative diagram
\[
\begin{array}{ccc}
V & \subset & Z \\
\downarrow & & \downarrow \\
V' & \subset & Y' \; .
\end{array}
\]
For $g \in G$, let $Z_g$ be the relative normalization of $Z$ in $V$ via the morphism $V \xrightarrow{g^{-1}} V \subset Z$. We get a commutative diagram
\[
\xymatrix{
V \ar@{}[r]|{\subset} \ar[d]^{\wr}_{g^{-1}} & Z_g \ar[d]^{\wr} \\
V \ar@{}[r]|{\subset} & Z
}
\]
Let $\varphi_g : Z \iso Z_g$ be the inverse morphism. The group $G$ acts on the fibre product $\prod_{g \in G} Z_g$ over $K$ by the formula
\begin{equation}
\label{eq:2,5}
h \cdot ((z_g)_{g \in G}) = ((\varphi_g \verk \varphi^{-1}_{h^{-1} g}) (z_{h^{-1} g}))_{g \in G} \; .
\end{equation}
The image of $V$ under the diagonal inclusion
\[
V \overset{\Delta}{\hookrightarrow} \prod_{g \in G} Z_g \; , \; x \mapsto (x)_{g \in G}
\]
is $G$-invariant because $h \cdot (x)_{g \in G} = (hx)_{g \in G}$. Let $Y$ be the closure of $\Delta (V)$ in $\prod_{g \in G} Z_g$ with the reduced structure. It is a projective scheme over $K$ with a $G$-action and contains $V \cong \Delta (V)$ as an open $G$-invariant subscheme. $Y$ is integral and since it contains $V$ which is geometrically integral it follows that $Y$ is a (projective) variety over $K$ with a $G$-action extending the one on $V$. We have a commutative diagram where $\pr_e$ is the projection to $Z_e = Z$:
\[
\xymatrix{
V \ar@{}[r]|{\subset} \ar[dd] & Y \ar@{}[r]|{\subset} & \prod_{g \in G} Z_g \ar[d]^{\pr_e} \\ 
 & & Z \ar[d] \\
V' \ar@{^{(}->}[rr] & & Y' \; .
}
\]
We view $Y$ as a $Z$-scheme and hence as a $Y'$ and $X$-scheme. Let $\pi_K : Y \to Y' \to X$ be the morphism to $X$. We have $\pi_K \verk g = \pi_K$ on the open dense subscheme $V$ by $Y$. By separatedness it follows that $\pi_K \verk g = \pi_K$ on $Y$ for all $g \in G$. Next, using equivariant resolution of singularities, we may assume after base change to a finite extension of $K$ that the projective $G$-variety $Y$ is also smooth. Using Lemma \ref{cd-t9} we find a commutative diagram
\[
\xymatrix{
\Zh \ar[d] & Y \ar[l] \ar[d] \\
\Yh' & Y' \ar[l]
}
\]
where $\Zh$ is a model over $\eo_K$ of $Y$ which we may suppose to be normal. For $g \in G$ let $\Zh_g$ be the relative normalization of $\Zh$ in $Y$ via the morphism $Y \xrightarrow{g^{-1}} Y \subset \Zh$. Consider the commutative diagram
\[
\xymatrix{
Y \ar@{}[r]|{\subset} \ar[d]_{g^{-1}} & \Zh_g \ar[d]^{\wr} \\
Y \ar@{}[r]|{\subset} & \Zh
}
\]
and let $\varphi_g : \Zh \iso \Zh_g$ be the inverse morphism. Again we let $G$ act on the fibre product $\prod_{g \in G} \Zh_g$ over $\eo_K$ by formula \eqref{eq:2,5}. The image of $Y$ under the diagonal inclusion $\Delta : Y \hookrightarrow \prod_{g \in G} \Zh_g$ is $G$-invariant and we define $\Yh$ to be the closure of $\Delta (Y)$ in $\prod_{g \in G} \Zh_g$ with the reduced subscheme structure. It is an integral proper $\eo_K$-scheme with generic fibre $Y$ and a $G$-action extending the one on $Y$. We have a commutative diagram where $\pr_e$ is the projection to $\Zh_e = \Zh$
\[
\xymatrix{
Y \ar@{}[r]|{\subset} \ar[dd] & \Yh \ar@{}[r]|{\subset} & \prod_{g \in G} \Zh_g \ar[d]^{\pr_e} \\
 & & \Zh \ar[d] \\
 Y' \ar@{^{(}->}[rr] & & \Yh'\; .
}
\]
Thus $\Yh$ becomes a $\Yh'$ and  hence an $\eX$-scheme, and the generic fibre of the morphism $\pi : \Yh \to \Yh' \to \eX$ is the morphism $\pi_K$ defined above. As before $\pi \verk g = \pi$ on the open dense subscheme $Y$ of $\Yh$ and separatedness implies that we have $\pi \verk g = \pi$ on all of $\Yh$. By passing to the normalization we may assume that $\Yh$ is normal. We now use Theorem \ref{cd-t16}: Replacing $\Yh$ by the normalization of $\Yh \otimes_{\eo_K} \eo_L$ for a suitable finite extension $L$ of $K$ we obtain a good model over $\eo_L$ of $Y \otimes_K L$ mapping to $\Yh' \otimes_{\eo_K} \eo_L$ and $\eX \otimes_{\eo_K} \eo_L$. Since normalization is functorial the $G$-action extends to the new $\Yh$. Base changing from $\eo_L$ to $\eo_{\oK}$ we are done.
\end{proof}

\begin{rems}
a) The model $\Yh$ that we constructed is actually projective over $\eo_K$ and hence a very good model (since $\Yh_{\oK}$ is smooth by construction). We do not need this however.\\
b) A similar method to extend $G$-actions is used in the proof of Lemma 2.4 in \cite{Liu2}.
\end{rems}
\section{Parallel transport modulo $ p^n$} \label{sec:cd3}

In this section, we define parallel transport modulo $p^n$ as a functor on the fundamental groupoid. Fix an algebraic closure $\oQ_p$ of $\Q_p$ and let $\C_p$ be the completion of $\oQ_p$. By $\oZ_p$ and $\eo= \eo_{\C_p}$ we denote the corresponding rings of integers. 

The version of the \'etale fundamental groupoid that we will use is the following. Let $Z$ be a variety over $\oQ_p$ with a point $z \in Z (\C_p)$. The fiber functor $F_z$ from the category of finite \'etale covers $\tZ$ of $Z$ to the category of finite sets attaches to $\tZ$ the set of $\C_p$-valued points of $\tZ$ over $z$. The objects of the (topological) category $\Pi_1 (Z)$ are the points of $Z (\C_p)$. For $z , z' \in Z (\C_p)$ the morphism space
\[
\Mor_{\Pi_1 (Z)} (z, z') = \Iso (F_z , F_{z'})
\]
is pro-discrete, hence totally disconnected and a compact Hausdorff space. Composition of morphisms is continuous. A morphism $\gamma$ from $z$ to $z'$ is called an \'etale path (up to homotopy). The Grothendieck fundamental group of $Z$ with base point $z$ is the automorphism group of the fiber functor $F_z$:
\[
\pi_1 (Z,z) = \Mor_{\Pi_1 (Z)} (z,z) = \Aut (F_z) \; .
\]
For any morphism $f : Z \to Z'$ of varieties over $\oQ_p$ there is a natural continuous functor $f_* : \Pi_1 (Z) \to \Pi_1 (Z')$. Here a functor between topological categories is continuous if the maps between the topological morphism spaces are continuous. Any automorphism $\sigma$ of $\oQ_p$ over $\Q_p$ induces a continuous automorphism $\sigma_* : \Pi_1 (Z) \to \Pi_1 ({}^{\sigma}\! Z)$ where $^{\sigma} Z = Z \otimes_{\oQ_{p,\sigma}} \oQ_p$. These functorialities are explained in detail in \cite{dewe1} \S\,3, p. 578/579.

A groupoid is a category all of whose morphisms are isomorphisms. We call a groupoid $\Ch$ quasi-finite if $\Mor (C,C') \neq \emptyset$ holds for all $C , C'$ in $\Ch$ and if for one (and hence every) object $C$ every finitely generated subgroup of $\Mor (C,C) = \Aut (C)$ is finite. For any element $\omega \in \oZ_p$ with $0 < |\omega| < 1$ the ring $\eo / \omega \eo = \oZ_p / \omega \oZ_p$ is the filtered union of finite rings. It follows that any finitely generated subgroup of $\GL_r (\eo / \omega \eo)$ is finite. The groupoid $\Free_r (\eo / \omega)$ of free $\eo / \omega$-modules of rank $r$ is therefore quasi-finite. We can now formulate the Seifert-van Kampen theorem that we need:

\begin{theorem} \label{cd-t24}
i) Let $Z$ be a normal variety over $\oQ_p$ and let $\gamma$ be an \'etale path from $x \in Z (\C_p)$ to $x' \in Z (\C_p)$ in $Z$. Then for all open subschemes $U$ and $U'$ of $Z$ with $x \in U (\C_p)$ and $x' \in U' (\C_p)$ and any point $y \in (U \cap U') (\C_p)$, there are \'etale paths $\gamma_U$ from $x$ to $y$ in $U$ and $\gamma_{U'}$ from $y$ to $x'$ in $U'$ such that we have
\[
\gamma = (j_{U'})_* \gamma_{U'} \verk (j_U)_* \gamma_U \; .
\]
ii) Let $Z$ be a variety over $\oQ_p$ and let $\eU$ be an open covering of $Z$ which is stable under finite intersections. Let $\Ch$ be a quasi-finite groupoid and assume that we are given functors $\rho_U : \Pi_1 (U) \to \Ch$ for every member $U$ of the covering $\eU$ such that for every inclusion $j : V \hookrightarrow U$ we have $\rho_V = \rho_U \verk j_*$. Then there is a unique functor $\rho : \Pi_1 (Z) \to \Ch$ with $\rho_U = \rho \verk j_{U*}$ for every $U$ in $\eU$ with $j_U : U \hookrightarrow X$ denoting the inclusion. 
\end{theorem}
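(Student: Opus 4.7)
The key input is the classical surjectivity result: for a normal integral scheme $Z$ and a nonempty open subscheme $U \subseteq Z$, the induced homomorphism $\pi_1(U, y) \to \pi_1(Z, y)$ is surjective at every base point. Since $Z$ is geometrically integral, $U$ and $U'$ are nonempty opens in an irreducible scheme, hence irreducible and connected, so I may pick arbitrary \'etale paths $\alpha : x \to y$ in $\Pi_1(U)$ and $\beta : y \to x'$ in $\Pi_1(U')$. The composite $\gamma_0 := (j_{U'})_* \beta \verk (j_U)_* \alpha$ is an \'etale path $x \to x'$ in $Z$, and $\eta := \gamma \verk \gamma_0^{-1}$ lies in $\pi_1(Z, x')$. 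Lifting $\eta$ to $\tilde\eta \in \pi_1(U', x')$ via surjectivity, the pair $\gamma_U := \alpha$ and $\gamma_{U'} := \tilde\eta \verk \beta$ then satisfies $(j_{U'})_* \gamma_{U'} \verk (j_U)_* \gamma_U = \eta \verk \gamma_0 = \gamma$.

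\textbf{Part (ii).} On objects, I set $\rho(x) := \rho_U(x)$ for any $U \in \eU$ containing $x$; well-definedness follows from the intersection-stability of $\eU$ together with the compatibility $\rho_V = \rho_U \verk j_*$, since if $x \in U(\C_p) \cap U'(\C_p)$ then $U \cap U' \in \eU$ also contains $x$ and $\rho_U(x) = \rho_{U \cap U'}(x) = \rho_{U'}(x)$. For morphisms, given $\gamma \in \Mor_{\Pi_1(Z)}(x, x')$, the plan is to produce a decomposition $\gamma = (j_{U_n})_* \gamma_n \verk \cdots \verk (j_{U_1})_* \gamma_1$ with each $\gamma_i$ an \'etale path inside some $U_i \in \eU$, by a part-(i)-style refinement at intermediate points chosen in overlaps (the open cover allows subdivision of paths even when $Z$ is not normal, since each open $U$ can itself be covered by opens in $\eU$ via intersection), and then to set $\rho(\gamma) := \rho_{U_n}(\gamma_n) \verk \cdots \verk \rho_{U_1}(\gamma_1)$.

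\textbf{Main obstacle.} The hard part will be independence of $\rho(\gamma)$ from the chosen decomposition and continuity of the resulting functor. For independence, given two decompositions I would form a common refinement by inserting further subdivisions through points in intersections $U_i \cap V_j \in \eU$; at every refinement step the hypothesis $\rho_V = \rho_U \verk j_*$ for $V \subseteq U$ forces the value of the product to be preserved, so the two decompositions yield the same element of $\Ch$. Functoriality then follows by concatenating decompositions, and the identity $\rho \verk (j_U)_* = \rho_U$ follows at once. Continuity on each $\Mor_{\Pi_1(Z)}(x, x')$ relies on the quasi-finiteness of $\Ch$: the image under $\rho$ of any finitely generated subgroup of $\pi_1(Z, x)$ lies in a finite subgroup of $\Aut(\rho(x))$, and combined with the continuity of each $\rho_U$ this transfers continuity to $\rho$. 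Uniqueness is immediate from the defining identity $\rho \verk (j_U)_* = \rho_U$, since every morphism in $\Pi_1(Z)$ is obtained, via the decomposition above, as a composition of images of morphisms in the $\Pi_1(U)$'s.
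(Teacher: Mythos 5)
Your argument is correct and essentially the same as the paper's. Both proofs hinge on the classical surjectivity of $\pi_1(U) \to \pi_1(Z)$ for a nonempty open $U$ in a normal variety $Z$ (the paper cites \cite{SGA1}, Exp. V, Prop. 8.2, described via unramified extensions of the function field). You streamline the bookkeeping slightly by forming a provisional decomposition and absorbing the error loop $\eta$ into $\gamma_{U'}$, whereas the paper splits $\gamma$ at $y$ using an auxiliary path $\delta$ and then lifts each piece separately. The substance is identical.

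\textbf{Part (ii).} Here there is a genuine gap, and it sits exactly where you flag the ``main obstacle.'' Your plan is to decompose an \emph{arbitrary} morphism $\gamma \in \Mor_{\Pi_1(Z)}(x,x')$ as a finite composition $(j_{U_n})_*\gamma_n \circ \cdots \circ (j_{U_1})_*\gamma_1$ with each piece inside some $U_i \in \eU$. For the profinite groupoid $\Pi_1(Z)$ this is not available. First, in part (ii) the variety $Z$ is not assumed normal, so the surjectivity underlying part (i) can fail: for example, if $Z$ is a union of two copies of $\Pa^1$ meeting at two points and one removes a singular point, $\pi_1$ of the resulting open is trivial while $\pi_1(Z) \cong \widehat{\Z}$. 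Second, and more fundamentally, $\Mor_{\Pi_1(Z)}(x,x')$ is a profinite set, and a generic element is a limit of, but not itself, a finite composition of images of morphisms from the $\Pi_1(U)$'s. Your parenthetical remark that intersection-stability of $\eU$ ``allows subdivision of paths'' does not address this; subdivision is a compactness argument available for topological paths on $[0,1]$, not for isomorphisms of fiber functors.

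The paper's proof resolves both difficulties by a detour through topology: it identifies $\C_p$ with $\C$, establishes $\Pi_1(Z) \cong \widehat{\Pi}^{\ttop}_1(Z(\C))$, invokes May's topological Seifert--van Kampen theorem $\Pi^{\ttop}_1(Z(\C)) = \varinjlim_{\eU}\Pi^{\ttop}_1(U(\C))$ (where subdivision of genuine paths is legitimate) to produce $\rho^{\ttop}$, and then uses quasi-finiteness of $\Ch$ together with the fact that $\pi^{\ttop}_1(Z(\C),z)$ is finitely generated to show $\rho^{\ttop}$ factors through the profinite completion $\Pi_1(Z)$. Your sketch gestures toward using quasi-finiteness for ``continuity,'' but the actual role it plays is this factorization: the image of a finitely generated group in $\Aut_{\Ch}(\rho(z))$ is finite, so $\rho^{\ttop}$ kills a finite-index normal subgroup and hence descends. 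To repair your argument you would have to carry out exactly this two-step construction (decompose topologically, then descend), which is what the paper does; the direct profinite decomposition you propose does not exist.
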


\begin{proof}
i) Since connected schemes are \'etale pathwise connected, see \cite{SGA1} Expos\'e V, \S\,7, we can choose an \'etale path $\delta$ in $Z$ from $x'$ to $y$. Then $\gamma_1 = \delta \verk \gamma$ and $\gamma_2 = \delta^{-1}$ are \'etale paths in $Z$ from $x$ to $y$ and from $y$ to $x'$, respectively, and we have $\gamma = \gamma_2 \verk \gamma_1$. It suffices to show that
\[
(j_U)_* : \Mor_{\Pi_1 (U)} (x,y) \longrightarrow \Mor_{\Pi_1 (Z)} (x,y) 
\]
is surjective, and that the analogous map $(j_{U'})_*$ is surjective. Since $U$ is connected and hence \'etale path connected, it suffices to show that the homomorphism
\[
(j_U)_* : \pi_1 (U,x) \longrightarrow \pi_1 (Z,x)
\]
is surjective. Let $\Kh$ be the common function field of $U$ and $Z$ and $\oKh$ an algebraic closure. Set $\oeta = \spec \oKh$. Connecting $x$ to $\oeta$ in $U$ (and hence in $Z$) it suffices to show that the map
\[
(j_U)_* : \pi_1 (U, \oeta) \longrightarrow \pi_1 (Z, \oeta)
\]
is surjective. This follows immediately from the description of $\pi_1 (U, \oeta)$ and  $\pi_1 (Z,\oeta)$ as the Galois groups over $\Kh$ of the maximal over $U$ and over $Z$, respectively,  unramified extension field of $\Kh$ in $\oKh$. Here we need that $Z$ and hence $U$ are normal varieties, see \cite{SGA1}, Expos\'e V, Proposition 8.2. \\
ii) Although i) provides part of an algebraic proof of ii) we do not know a quick way to give a full algebraic proof of ii). We therefore reduce ii) to a known Seifert-van Kampen theorem in topology using several standard results. Note that the algebraic fundamental group of a variety over an algebraically closed field of characteristic zero does not change under base extension to a bigger algebraically closed field. It follows that $\Pi_1 (Z) = \Pi_1 (Z_{\C_p})$ where the objects are the same, i.e. $Z (\C_p)$, and the morphisms of $\Pi_1 (Z_{\C_p})$ are defined using fibre functors on the category of finite \'etale coverings of $Z_{\C_p}$. Next we identify $\C_p$ with $\C$ as fields and claim that
\begin{equation}
\label{eq:cd3}
\widehat{\Pi}^{\ttop}_1 (Z (\C)) = \Pi_1 (Z_{\C_p}) \; .
\end{equation}
Here the category $\widehat{\Pi}^{\ttop}_1 (Z (\C))$ has the same object set $Z (\C)$ as $\Pi^{\ttop}_1 (Z (\C))$. For $z , z'$ in $Z (\C)$ the homotopy classes of paths from $z$ to $z'$ form the morphism set $\Mor (z,z')$ of $\Pi^{\ttop}_1 (Z (\C))$. The group 
\[
\Gamma = \pi^{\ttop}_1 (Z (\C) , z) = \Mor (z,z)
\]
acts simply transitively on $\Mor (z,z')$. The morphisms from $z$ to $z'$ in $\widehat{\Pi}^{\ttop}_1 (Z (\C))$ are given by the profinite set
\[
\widehat{\Mor} (z,z') = \varprojlim_N \Mor (z, z') / N \; .
\]
Here $N$ runs over the normal subgroups of $\Gamma$ of finite index. The analogous construction using the right action by $\Gamma' = \Mor (z',z')$ gives the same result. 
Equation (\ref{eq:cd3}) now follows from the natural isomorphisms $\hat{\pi}^{\ttop}_1 (Z (\C) , z) = \pi_1 (Z_{\C_p} , z)$ for $z \in Z (\C_p) = Z (\C)$, see  \cite{SGA1} XII, Cor. 5.2, since the objects of both categories coincide by our identification of  $\C_p$ with $\C$.

According to \cite{May} Ch. II, \S\,7 we have
\begin{equation}
\label{eq:cd4}
\Pi^{\ttop}_1 (Z (\C)) = \varinjlim_{\eU} \Pi^{\ttop}_1 (U (\C)) \; .
\end{equation}
Using the functors
\[
\Pi^{\ttop}_1 (U (\C)) \longrightarrow \widehat{\Pi}^{\ttop}_1 (U (\C)) = \Pi_1 (U) \quad \text{for} \; U \in \eU \; ,
\]
the functors $\rho_U : \Pi_1 (U) \to \Ch$ define functors $\rho^{\ttop}_U : \Pi^{\ttop}_1 (U (\C)) \to \Ch$ which are compatible with morphisms in $\eU$, i.e. with the inclusions $V \hookrightarrow U$ for $U,V \in \eU$. Using \eqref{eq:cd4} we obtain a unique functor
\[
\rho^{\ttop} : \Pi^{\ttop}_1 (Z (\C)) \longrightarrow \Ch
\]
which is compatible with all $\rho^{\ttop}_U$. The fundamental group $\pi^{\ttop}_1 (Z (\C) , z)$ is finitely generated since $Z$ is a quasi-projective variety. Since $\Ch$ is quasi-finite, the image of the group $\Mor (z,z) = \Pi^{\ttop}_1 (Z (\C) , z)$ in $\Mor_{\Ch} (\rho (z) , \rho (z)) = \Aut_{\Ch} (\rho (z))$ is finite and we get a factorization
\[
\Mor (z,z) \longrightarrow \widehat{\Mor} (z,z) \longrightarrow \Mor_{\Ch} (\rho (z) , \rho (z)) \; .
\]
It follows that for all $z , z' \in Z (\C) = Z (\C_p)$ we have a factorization
\[
\Mor (z,z') \longrightarrow \widehat{\Mor} (z,z') \longrightarrow \Mor_{\Ch} (\rho (z) , \rho (z')) \; .
\]
Hence the functor $\rho^{\ttop}$ factors:
\[
\rho^{\ttop} : \Pi^{\ttop}_1 (Z (\C)) \longrightarrow \widehat{\Pi}^{\ttop}_1 (Z (\C)) = \Pi_1 (Z) \xrightarrow{\rho} \Ch \; .
\]
The desired property of $\rho$ and its uniqueness follow with little effort from the corresponding assertions for $\rho^{\ttop}$, again using that $\Ch$ is quasi-finite. 
\end{proof}

\begin{notation}\label{not}
Recall that $\oZ_p$ and $\eo $ denote the valuation rings in $\oQ_p$ and $\C_p$, respectively. By $k = \oF_p$ we denote their common residue field.

For all $n \geq 1$ we set $\eo_n = \eo / p^n \eo = \oZ_p / p^n \oZ_p$. If $\eX$ is a scheme over $\eo$ or $\oZ_p$, and if $\Eh$ is a vector bundle on $\eX$, we denote by  $\eX_n$ and $\Eh_n$  the base changes of $\eX$ and $ \Eh$, respectively,  with $\eo_n$. Similarly, if $\pi$ is a morphism between $\eo$- or $\oZ_p$-schemes, we denote its base change with $\eo_n$ by $\pi_n$. 
\end{notation}

\begin{defn} \label{cd-t25}
Let $(\eX , U)$ be a proper pair over $\Z_p$ and $\Eh$ a vector bundle on $\eX_{\eo}$. A proper morphism $\pi : \Yh \to \eX$ is called a (very) good trivializing cover for $\Eh_n$ over $(\eX , U)$ if $\Yh$ is a (very) good model over $\oZ_p$, the restriction $\pi^{-1} (U) \to U$ is finite \'etale surjective and $\pi^*_n \Eh_n$ is a trivial bundle. 
\end{defn}

The parallel transport modulo $p^n$  will be obtained by glueing instances of the following basic construction.

\begin{constr} \label{cd-t26}
Let $(\eX , U)$ be a proper pair over $\oZ_p$ and $\Eh$ a vector bundle of rank $r$ over $\eX$. Assuming that there is a good trivializing cover $\pi : \Yh \to \eX$ for $\Eh_n$ over $(\eX , U)$ we now construct a functor
\[
\rho_{\Eh , n} (U) : \Pi_1 (U) \longrightarrow \Free_r (\eo_n) \; 
\]
in the following way:
\end{constr}

Any point $x \in U(\C_p)$ defines by properness a section $x_{\eo} \in \Xh_{\eo}(\eo)$, and hence a point $x_n \in \Xh_{\eo}(\eo_n)$ after reduction modulo $p^n$. On objects, we associate to every $x \in U(\C_p)$ the fiber $\Eh_{x_n} = x_n^\ast \Eh$, viewed as a free $\eo_n$-module. 

Let $\gamma$ be an \'etale path from $x$ to $x'$ in $U$. We fix a preimage $y \in \pi^{-1}(x)$ of $x$. Since $\pi^{-1}(U) \rightarrow U$ is a finite \'etale covering, $\gamma$ gives rise to a point $\gamma y \in \pi^{-1}(x')$. By properness, $x'$ extends to a section $x'_{\eo}$ in $\Xh_\eo(\eo)$, which can be reduced modulo $p^n$ to $x'_n \in \Xh_\eo (\eo_n)$. Similarly, $y$ and $\gamma y$ give rise to points $y_n$ and $(\gamma y)_n$ in $\Yh_\eo (\eo_n)$. 

Let $\lambda : \Yh \to \spec \oZ_p$ be the structural morphism and $\lambda_n : \Yh_n \to \spec \eo_n$ its reduction $\mod p^n$. Since $\lambda_* \Oh_{\Yh} = \Oh_{\spec \oZ_p}$ holds universally by Proposition \ref{cd-t11}, we have $\lambda_{n*} \Oh_{\Yh_n} = \Oh_{\spec \eo_n}$. Since $\pi_n^* \Eh_n$ is a trivial bundle, pullback by $y_n : \spec \eo_n \to \Yh_n$ induces an isomorphism
\[
y^*_n : \Gamma (\Yh_n , \pi^*_n \Eh_n) \iso \Gamma (\spec \eo_n , y^*_n \pi^*_n \Eh_n) = \Gamma (\spec \eo_n , x^*_n \Eh_n) = \Eh_{x_n} \; ,
\]
and similarly for $(\gamma y)^*_n$. We now define $\rho_{\Eh,n} (U) (\gamma)$ as the composition
\[
\rho_{\Eh,n} (U) (\gamma) = (\gamma y)^*_n \verk (y^*_n)^{-1} : \Eh_{x_n} \longrightarrow \Eh_{x'_n} \; .
\]

\begin{lemma} \label{cd-t27} In the situation of Construction \ref{cd-t26}, 
the functor $\rho_{\Eh,n} (U) : \Pi_1 (U) \to \Free_r (\eo_n)$ depends only on $(\eX,U)$ and $\Eh$ and not on the auxiliary choices. 

If $(\teX , \tU)$ is another proper pair and $\alpha : (\teX, \tU) \to (\eX,U)$ a morphism of proper pairs,  there exists also a good trivializing cover for $(\alpha^* \Eh)_n$ over $(\teX , \tU)$ and hence $\rho_{\alpha^* \Eh , n} (\tU)$ is defined. The following formula holds:
\begin{equation} \label{eq:cd5}
\rho_{\Eh,n} (U) \verk \alpha_* = \rho_{\alpha^* \Eh,n} (\tU) : \Pi_1 (\tU) \longrightarrow \Free_r (\eo_n) \; .
\end{equation}
Here for simplicity, we have written $\alpha_*$ for $(\alpha \; |_{\tU})_*$ where $\alpha \, |_{\tU} : \tU \to U$ is the induced morphism.
\end{lemma}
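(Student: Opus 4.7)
The plan is to reduce the whole lemma to a single comparison principle (Step~1) saying that evaluation of global sections of $\pi_n^\ast\Eh_n$ commutes with proper cover morphisms over any morphism of proper pairs. Independence from the choices will then follow by comparing the given cover with suitable enlargements furnished by Lemmas \ref{cd-t17} and \ref{cd-t18}, and formula \eqref{eq:cd5} will fall out of the same principle applied to a base-change cover built via Lemma \ref{cd-t13}.

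\emph{Step 1 (the comparison principle).} Suppose $\sigma\colon\tYh\to\Yh$ is a proper morphism lying over a morphism $\beta\colon\teX\to\eX$ of proper pairs, with $\pi\verk\sigma=\beta\verk\tpi$, and with $\tpi$, $\pi$ good trivializing covers of $\beta^\ast\Eh_n$, $\Eh_n$, respectively. For $\ty\in\tpi^{-1}(\tx)$ with $\sigma(\ty)=y$, I would verify that the diagram
\[
\xymatrix{\Gamma(\Yh_n,\pi_n^\ast\Eh_n) \ar[r]^-{\sigma_n^\ast} \ar[d]_{y_n^\ast} & \Gamma(\tYh_n,\tpi_n^\ast\beta^\ast\Eh_n) \ar[d]^{\ty_n^\ast} \\ \Eh_{x_n} \ar@{=}[r] & (\beta^\ast\Eh)_{\tx_n}}
\]
commutes. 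Commutativity follows from $\sigma_n\verk\ty_n=y_n$, which is forced by properness (uniqueness of lifts). The horizontal $\sigma_n^\ast$ is an isomorphism of free $\eo_n$-modules of rank $r$, since pullback of a trivializing frame is a trivializing frame and both modules are identified with $\eo_n^r$ using $\Gamma(\Yh_n,\Oh)=\Gamma(\tYh_n,\Oh)=\eo_n$ from Proposition \ref{cd-t11}. The analogous diagram at $\tx'$ uses that $\sigma(\tilde\gamma\ty)=(\beta_\ast\tilde\gamma)y$, which is naturality of the \'etale path $\tilde\gamma$ under the finite \'etale morphism $\sigma|_V$. Cancelling the invertible $\sigma_n^\ast$ then gives equality of the two parallel-transport expressions.

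\emph{Step 2 (independence).} For $y$-independence I would specialize to $\beta=\id_\eX$ and use Lemma \ref{cd-t18} to dominate $\pi$ by a good trivializing cover $\tpi\colon\tYh\to\eX$ whose restriction over $U$ is Galois with group $G$ acting on $\tYh$ over $\eX$; any two preimages of $x$ in $\tpi^{-1}(x)$ differ by some $g\in G$, and the twist $g_n^\ast$ appears identically at both endpoints (by naturality of paths under the $U$-automorphism $g|_V$) and cancels. Combined with Step 1 this settles $y$-independence. For independence from $\pi$ itself, Lemma \ref{cd-t17} dominates any two good trivializing covers by a common one (pullback preserves triviality), and Step 1 applied twice concludes. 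The functor property of $\rho_{\Eh,n}(U)$ then follows immediately by choosing $\gamma_1 y$ as intermediate preimage in $\rho(\gamma_2\verk\gamma_1)$.

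\emph{Step 3 (formula \eqref{eq:cd5}).} Given $\alpha\colon(\teX,\tU)\to(\eX,U)$ and $\pi\colon\Yh\to\eX$ a good trivializing cover of $\Eh_n$, I would form $\Yh\times_\eX\teX$, pick a connected component of the finite \'etale cover $\pi^{-1}(U)\times_U\tU\to\tU$, and use Lemmas \ref{cd-t8} and \ref{cd-t13} to produce a very good model $\tYh$ with a proper morphism $\sigma\colon\tYh\to\Yh$ over $\alpha$ such that $\tpi^{-1}(\tU)\to\tU$ is finite \'etale surjective. Since $\tpi_n^\ast\alpha^\ast\Eh_n=\sigma_n^\ast\pi_n^\ast\Eh_n$ is trivial, $\tpi$ is a good trivializing cover for $\alpha^\ast\Eh_n$, and Step 1 with $\beta=\alpha$ yields \eqref{eq:cd5}. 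The one genuinely nontrivial ingredient I foresee is Step 2, which rests on the existence of a $G$-equivariant good model dominating $\pi$; this is precisely what the intricate argument in Lemma \ref{cd-t18} is designed to supply.
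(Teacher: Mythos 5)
Your proposal is correct and follows essentially the same line of argument as the paper: the same key diagram chase (which the paper carries out once for the factorization case to get its equation (\ref{eq:cd6}) and once for the morphism $\alpha$, and which you have merely unified into a single "comparison principle"), the same reduction of $y$-independence to the $G$-equivariant case via Lemma \ref{cd-t18}, the same appeal to Lemma \ref{cd-t17} for cover-independence, and the same use of the fibre product with Lemmas \ref{cd-t8} and \ref{cd-t13} for formula (\ref{eq:cd5}).
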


\begin{proof}
Let $\tpi : \tYh \to \eX$ be another good trivializing cover for $\Eh_n$ of $(\eX , U)$ and assume that $\tpi$ factors as $\tpi : \tYh \xrightarrow{\rho} \Yh \xrightarrow{\pi} \eX$ where $\rho$ is proper and the maps $\tpi^{-1} (U) \xrightarrow{\rho} \pi^{-1} (U) \xrightarrow{\pi} U$ are finite and \'etale. Choose a preimage $\ty \in \tpi^{-1} (x)$ of $x$. An \'etale path $\gamma$ from $x$ to $x'$ in $X$ gives rise to a preimage $\gamma \ty \in \tpi^{-1} (x')$ of $x'$. Then $y = \rho (\ty)$ and $\gamma y = \rho (\gamma \ty)$ are preimages in $\pi^{-1} (U)$ of $x$ and $x'$ as in construction \ref{cd-t26}. Using the commutative diagram
\[
\xymatrix{
\Eh_{x'_n} \ar@{=}[d] & \Gamma (\Yh_n , \pi^*_n \Eh_n) \ar[l]_-{\overset{(\gamma y)^*_n}{\sim}} \ar[d]^{\rho^*_n} \ar[r]^-{\overset{y^*_n}{\sim}} & \Eh_{x_n} \ar@{=}[d] \\
\Eh_{x'_n} & \Gamma (\tYh_n , \tpi^*_n \Eh_n) \ar[l]_-{\overset{(\gamma \ty)^*_n}{\sim}} \ar[r]^-{\overset{\ty^*_n}{\sim}} & \Eh_{x_n} 
}
\]
we find that
\begin{equation}\label{eq:cd6}
(\gamma y)^*_n \verk (y^*_n)^{-1} = (\gamma \ty)^*_n \verk (\ty^*_n)^{-1} \; .
\end{equation}
We can now show that the definition of the parallel transport in Construction \ref{cd-t26} is independent of the choice of a preimage $y \in \pi^{-1} (x)$ of $x \in U (\C_p)$. Because of Lemma \ref{cd-t18}, the connectedness of $\pi^{-1} (U)$ and formula \eqref{eq:cd6} we may assume the following:\\
1) there is a finite group $G$ acting on $\Yh$ such that $\pi : \Yh \to \eX$ is equivariant if $G$ acts trivially on $\eX$.\\
2) The group $G$ acts (simply) transitively on $\pi^{-1} (x)$ for $x \in U (\C_p)$.\\
Given another preimage $z \in \pi^{-1} (x)$ there is a (unique) automorphism $\sigma \in G$ of $\Yh$ such that $z =\sigma y$. Hence we have $\gamma (z) = \sigma (\gamma (y)) = \sigma \verk \gamma (y)$. It follows that
\[
z^*_n = y^*_n \verk \sigma^* : \Gamma (\Yh_n , \pi^*_n \Eh_n) \xrightarrow{\overset{\sigma^*}{\sim}} \Gamma (\Yh_n , \pi^*_n \Eh_n) \xrightarrow{\overset{y^*_n}{\sim}} \Eh_{x_n}
\]
and similarly that $(\gamma z)^*_n = (\gamma y)^*_n \verk \sigma^*$. This gives
\[
(\gamma z)^*_n \verk (z^*_n)^{-1} = (\gamma y)^*_n \verk \sigma^* \verk (\sigma^*)^{-1} \verk (y^*_n)^{-1} = (\gamma y)^*_n \verk (y^*_n)^{-1} \; .
\]
By Lemma \ref{cd-t17} any two proper morphisms $\pi_i : \Yh_i \to \eX$ for $i = 1,2$ from good models $\Yh_i$ over $\oZ_p$ such that $\pi^{-1}_i (U) \to U$ is finite \'etale surjective are dominated by a third such morphism $\tpi : \tYh \to \eX$ with analogous properties. Using the previous arguments and formula \eqref{eq:cd6}, it follows that the parallel transport $\rho_{\Eh,n}$ is also independent of the choice of a good trivializing cover $\pi : \Yh \to \eX$ in its definition.

Given a morphism $\alpha : (\teX , \tU) \to (\eX , U)$ of proper pairs choose a good trivializing cover  $\pi : \Yh \to \eX$ for $\Eh$ as in Construction \ref{cd-t26}. Then the proper morphism $\opi : \alpha^{-1} (\Yh) = \teX \times_{\eX} \Yh   \to \teX$   is finite \'etale and surjective over $\tU$ and trivializes $\alpha^* \Eh$  modulo $ p^n$. Choose a connected component $\tV$ of the smooth quasiprojective scheme $\opi^{-1} (\tU)$. Since $\tU$ is connected and non-empty the finite \'etale morphism $\tV \to \tU$ is surjective. Using Lemmas \ref{cd-t8} and \ref{cd-t13} we find a good model $\tYh$ and a proper morphism $\rho : \tYh \to \alpha^{-1} (\Yh)$ such that the composition $\tpi : \tYh \xrightarrow{\rho} \alpha^{-1} (\Yh) \xrightarrow{\opi} \oeX$ restricts to a finite \'etale, surjective morphism $\tpi^{-1} (\tU) \cong \tV \to \tU$. It is clear that formula \eqref{eq:cd5} holds on objects. To check it on morphisms choose a path $\tgamma$ from $\tx \in \tU (\C_p)$ to $\tx' \in \tU (\C_p)$. Then $\gamma = \alpha_* (\tgamma)$ is a path from $x = \alpha (\tx)$ to $x' = \alpha (\tx')$. Consider the commutative diagram
\[
\xymatrix{
\tYh \ar[d]_{\rho} \ar[dr]^{\talpha} \\
\alpha^{-1} (\Yh) \ar[r] \ar[d]_{\opi} & \Yh \ar[d]^{\pi} \\
\teX \ar[r]^{\alpha} & \eX
}
\]
where $\rho^{-1} (V) \cong V \subset \alpha^{-1} (\pi^{-1} (U))$. We have to show that the diagram
\[
\xymatrix{
\alpha^* (\Eh)_{\tx_n} \ar@{=}[d] \ar[rr]^{\rho_{\alpha^* \Eh , n} (\tU) (\tgamma)} & & \alpha^* (\Eh)_{\tx'_n} \ar@{=}[d] \\
\Eh_{x_n} \ar[rr]^{\rho_{\Eh,n} (U) (\gamma)} & & \Eh_{x'_n}
}
\]
commutes. Choose a point $\ty \in \tpi^{-1} (\tx)$ and let $y = \talpha (\ty) = \talpha \verk \ty$ be its image in $\pi^{-1} (x)$. By definition
\[
\gamma y = \alpha_* (\tgamma) y = \talpha (\tgamma \ty) = \talpha \verk \tgamma \ty \; .
\]
Hence we have:
\begin{align*}
\rho_{\Eh,n} (U) (\gamma) & = (\gamma y)^*_n \verk (y^*_n)^{-1} = (\tgamma \ty)^*_n \verk \talpha^*_n \verk (\talpha^*_n)^{-1} \verk (\ty^*_n)^{-1} \\
& = (\tgamma \ty)^*_n \verk (\ty^*_n)^{-1} = \rho_{\alpha^* \Eh , n} (\tU) (\tgamma) \; .
\end{align*}
Here $\talpha^*_n$ refers to the isomorphism:
\[
\talpha^*_n : \Gamma (\Yh_n , \pi^*_n \Eh_n) \iso \Gamma (\tYh_n , \talpha^*_n \pi^*_n \Eh_n) = \Gamma (\tYh_n , \tpi^*_n \alpha^*_n \Eh_n) \; .
\]
Note here that $\pi^*_n \Eh_n$ is a trivial vector bundle and that $\Gamma (\Yh_n , \Oh) = \eo_n = \Gamma (\tYh_n , \Oh)$ since both $\Yh$ and $\tYh$ are good models over $\oZ_p$. 
\end{proof}
\section{Trivializing covers for bundles with numerically flat reduction}\label{section5}
We sill now introduce the category of vector bundles for which we can define parallel transport. 
\begin{defn}\label{cd-t19}
Let $X$ be a smooth, complete variety over $\oQ_p$ and $\eX$ a model of $X$ over $\oZ_p$. We denote by $\Bh^s (\eX_{\eo})$ the full subcategory of vector bundles $\Eh$ on $\eX_{\eo}$ whose special fibre $\Eh_k$ is a numerically flat bundle on the complete connected $k$-scheme $\eX_k$. 
\end{defn}

Note here that $\eX_k$ is connected by Lemma \ref{cd-t7}.

\begin{prop} \label{cd-t20}
The $\eo$-linear exact category $\Bh^s (\eX_{\eo})$ is closed under extensions, tensor products, duals, internal homs and exterior powers.
\end{prop}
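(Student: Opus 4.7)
The plan is to reduce each closure property to the analogous statement for numerically flat vector bundles on the complete connected $k$-scheme $\eX_k$ (connected by Lemma \ref{cd-t7}). The restriction functor $\Eh \mapsto \Eh_k = \Eh \otimes_\eo k$ is exact on vector bundles (since locally free sheaves over $\eo$ are flat) and commutes with tensor products, duals, internal homs, and exterior powers. Thus each assertion reduces to the corresponding closure property for numerically flat bundles on $\eX_k$.

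For extensions: given $0 \to \Eh' \to \Eh \to \Eh'' \to 0$ with $\Eh', \Eh'' \in \Bh^s(\eX_\eo)$, flatness of $\Eh''$ yields an exact sequence $0 \to \Eh'_k \to \Eh_k \to \Eh''_k \to 0$ on the special fibre. For any morphism $f : C \to \eX_k$ from a smooth connected projective curve $C$ over $k$, the bundle $f^* \Eh_k$ is an extension of $f^* \Eh''_k$ by $f^* \Eh'_k$, both semistable of degree zero on $C$. Since an extension of semistable bundles of a common slope is again semistable of that slope (in any characteristic), $f^* \Eh_k$ is semistable of degree zero, so $\Eh_k$ is Nori-semistable.

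For tensor products, duals, internal homs, and exterior powers, I invoke the fact (contained in \cite{L1}, Proposition 5.5, which exhibits the category of numerically flat bundles on a complete reduced connected $k$-scheme as a neutral Tannakian category) that these operations preserve numerical flatness. One reduces to the reduced case using that $\Eh_k$ is Nori-semistable on $\eX_k$ iff it is so on $(\eX_k)^{\red}$ (as noted in the excerpt), since the operations commute with restriction to the reduced subscheme. Alternatively, one may use the nef characterization: numerical flatness of $F$ amounts to nefness of both $F$ and $F^*$, and tensor products and exterior powers of nef bundles are nef in any characteristic, while duality is symmetric by definition and internal hom reduces to a dual tensored with the second argument via $\uHom(F_1, F_2) = F_1^* \otimes F_2$.

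The main subtle point will be the tensor product: in characteristic $p$ the tensor product of semistable bundles on a curve need not be semistable, so one cannot verify Nori-semistability for $F_1 \otimes F_2$ by a naive curve-by-curve argument; the Tannakian/nef formulation is what circumvents this difficulty. Extensions, by contrast, are not covered by the Tannakian structure and require the separate elementary argument above.
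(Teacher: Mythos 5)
Your proof is correct. It differs from the paper's route for the tensor-type operations. The paper reduces the whole proposition, operation by operation, to the corresponding closure properties for Nori-semistable bundles on smooth projective curves over a field---using the curve-by-curve definition of Nori-semistability---and remarks that for $k = \oF_p$ these closure properties follow without effort from Theorem~\ref{cd-t2} (the \'etale-cover-composed-with-Frobenius trivialization). You handle extensions curve-by-curve via the elementary slope computation, but for tensor products, duals, internal homs, and exterior powers you appeal instead to Langer's Tannakian structure (\cite{L1}, Proposition~5.5) or to the stability of nefness under these operations; that works, and the Tannakian packaging is particularly convenient for exterior powers since they are quotients of tensor powers in an abelian tensor category. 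One caveat about your justification for avoiding the curve-by-curve route: it is indeed false in characteristic $p$ that tensor products of merely semistable degree-zero bundles on a curve remain semistable, but the paper's reduction in fact produces pullbacks that are \emph{Nori}-semistable on the auxiliary curve, i.e.\ strongly semistable of degree zero, and for those the tensor product does remain strongly semistable (Ramanan--Ramanathan). So the curve-by-curve argument does succeed with the correct intermediate notion, and the ``well-known'' closure properties the paper invokes are exactly those for strongly semistable bundles; your Tannakian/nef route repackages the same underlying input rather than avoiding it.
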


\begin{proof}
This is a consequence of the corresponding closure properties for Nori-semistable bundles on smooth projective curves over a field which are well known. Incidentally, in our situation where the base field is $k = \oF_p$ they follow without effort from Theorem \ref{cd-t2}.
\end{proof}

In order to construct  parallel transport $\mod p^n$ for the bundles $\Eh$ in $\Bh^s (\eX_{\eo})$ we need to know that they have suitable trivializing covers so that we can apply Construction \ref{cd-t26}. The following definition makes this precise. 

\begin{defn} \label{cd-t21}
Let $X$ be a smooth, complete variety over $\oQ_p$ and $\eX$ a model of $X$ over $\oZ_p$. For an ideal $\en$ in $\oZ_p$ let $\Bh_{\en} (\eX_{\eo})$ be the full subcategory of vector bundles $\Eh$ on $\eX_{\eo}$ for which the following holds:\\
i) There is an open covering $\{ U_i \}$ of $X$ and a family $\{ \Yh_i \}$ of good (or by Lemma \ref{cd-t17} even very good) models over $\oZ_p$ together with proper morphisms $\pi_i : \Yh_i \to \eX$ such that $\pi^{-1}_i (U_i) \to U_i$ is finite \'etale surjective for all $i$.\\
ii) The pullback via $\pi_i$ of $\Eh \otimes_{\eo} \eo / \en$ to $\Yh_{i \eo} \otimes_{\eo} \eo / \en = \Yh_i \otimes_{\oZ_p} \oZ_p / \en$ is a trivial bundle for each $i$.
\end{defn}

If $\en = \omega \oZ_p$ for some $\omega \in \oZ_p$ with $0 < |\omega| < 1$ we write $\Bh_{\omega} (\eX_{\eo})$ for $\Bh_{\en} (\eX_{\eo})$. 

\begin{theorem} \label{cd-t22} Let $X$ be a smooth, complete variety over $\oQ_p$ and $\eX$ a model of $X$ over $\oZ_p$. We denote by $\emm$ the maximal ideal in $\oZ_p$. Then 
$\Bh^s (\eX_{\eo}) = \Bh_\emm(\eX_{\eo})$. Hence every vector bundle on $\eX_\eo$ with numerically flat reduction can be trivialized on the special fibers of a covering  $\{ \Yh_i \}$ as in Definition \ref{cd-t21} i).
\end{theorem}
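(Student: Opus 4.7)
The plan is to prove the two inclusions $\Bh_\emm(\eX_\eo) \subseteq \Bh^s(\eX_\eo)$ and $\Bh^s(\eX_\eo) \subseteq \Bh_\emm(\eX_\eo)$ separately.

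For the easy direction $\Bh_\emm \subseteq \Bh^s$: given $\Eh \in \Bh_\emm(\eX_\eo)$ with covering data $\{(U_i, \Yh_i, \pi_i)\}$, each proper morphism $\pi_i: \Yh_i \to \eX$ has image containing the dense open $U_i \subset X$, hence $\pi_i$ is surjective; in particular the special fibre map $\pi_{i,k}: \Yh_{i,k} \to \eX_k$ is a surjection of complete connected $k$-schemes (by Lemma \ref{cd-t7} and Proposition \ref{cd-t11}). Triviality of $(\pi_i^* \Eh)_k$ implies it is numerically flat, and the descent property for numerical flatness under surjective morphisms of complete connected $k$-schemes (noted in Section \ref{sec:cd1}) gives numerical flatness of $\Eh_k$.

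The harder direction $\Bh^s \subseteq \Bh_\emm$ begins by applying Theorem \ref{cd-t2}---reducing to the projective case via Chow's lemma, Theorem \ref{cd-t15}, when necessary---to obtain a finite \'etale cover $\pi': Y' \to \eX_k$ and an integer $\nu \ge 0$ such that $(\pi' \verk F^\nu)^* \Eh_k$ is trivial, where $F$ denotes Frobenius. Using the topological invariance of the \'etale site combined with Grothendieck's existence theorem for the proper $\oZ_p$-scheme $\eX$, one lifts $\pi'$ uniquely to a finite \'etale morphism $\pi_{\et}: \Yh' \to \eX$ over $\oZ_p$. The Frobenius $F^\nu$ is then realized in characteristic zero via a \emph{naive Frobenius on projective space}: after choosing a closed embedding $\Yh' \hookrightarrow \Pa^N_{\oZ_p}$, consider the morphism $\Phi: \Pa^N_{\oZ_p} \to \Pa^N_{\oZ_p}$ given by $[x_0:\ldots:x_N] \mapsto [x_0^q:\ldots:x_N^q]$ with $q = p^\nu$. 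The scheme-theoretic preimage $\Zh := \Phi^{-1}(\Yh')$ is flat over $\oZ_p$ (since $\Phi$ is finite flat) and its special fibre satisfies $\Zh_k^{\red} = \Yh'_k$ with the induced map $\Zh_k^{\red} \to \Yh'_k$ equal to $F^\nu$, because $\Phi_k$ coincides with the absolute Frobenius iterate on $\Pa^N_k$.

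To produce the open covering, note that $\Phi$ is finite \'etale on the open torus $\Ge_m^N \subset \Pa^N_{\oQ_p}$ (the complement of the coordinate hyperplanes), since the $q$-th power map is \'etale on $\Ge_m$ in characteristic zero. Varying $\Phi$ through conjugation by elements $g \in \PGL_{N+1}(\oZ_p)$ yields twisted Frobenii $\Phi_g$ \'etale on $g \cdot \Ge_m^N$; by a general-position argument together with quasi-compactness, finitely many translates cover $\Yh'_{\oQ_p}$, so that $U_i := \pi_{\et}(\Yh'_{\oQ_p} \cap g_i \cdot \Ge_m^N)$ is an open cover of $X$. For each $g_i$, applying Lemma \ref{cd-t13} to the proper pair consisting of $\Phi_{g_i}^{-1}(\Yh')$ together with its smooth \'etale locus yields a very good model $\Yh_i$ with a proper morphism $\pi_i: \Yh_i \to \eX$ which is finite \'etale over $U_i$. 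Since $\Yh_{i,k}$ is reduced, the morphism $\Yh_{i,k} \to (\Phi_{g_i}^{-1}\Yh')_k$ factors through $\Yh'_k$, so the pullback of $\Eh_k$ to $\Yh_{i,k}$ is pulled back from the trivial bundle $(\pi' \verk F^\nu)^* \Eh_k$ and is therefore trivial. The principal obstacle is this naive Frobenius step: verifying the Frobenius structure on the special fibre of $\Phi^{-1}(\Yh')$, handling the non-reducedness there to extract good models, and arranging the \'etale loci of the various twisted Frobenii to descend through $\pi_{\et}$ to an actual open cover of $X$.
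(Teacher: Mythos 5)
Your proposal follows the same overall strategy as the paper's proof: the easy inclusion $\Bh_\emm \subseteq \Bh^s$ by descent of numerical flatness along surjective morphisms, and the hard inclusion $\Bh^s \subseteq \Bh_\emm$ by applying Theorem \ref{cd-t2} to get a finite \'etale cover followed by a Frobenius power, lifting the finite \'etale part to $\eX$, and realizing the Frobenius power geometrically via the naive $q$-th power map on $\Pa^N$, whose generic fiber is \'etale exactly on the torus $\Ge_m^N$. You correctly identify the flatness of $\Phi^{-1}(\Yh')$, the isomorphism $\Zh_k^{\red} \cong \Yh'_k$, and the role of reducedness in factoring through $\Zh_k^{\red}$.

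There is, however, a genuine gap in the step producing the open covering, which you yourself flag as the ``principal obstacle'' without resolving. You define $U_i := \pi_{\et}\bigl(\Yh'_{\oQ_p} \cap g_i \cdot \Ge_m^N\bigr)$ and infer from the fact that the translates $g_i \cdot \Ge_m^N$ cover $\Yh'_{\oQ_p}$ that the $U_i$ cover $X$. While the covering statement about $X$ is correct, with \emph{this} definition of $U_i$ the composite $\Phi_{g_i}^{-1}(\Yh') \to \Yh' \to \eX$ is in general \emph{not} finite \'etale over $U_i$: the preimage $\pi_{\et}^{-1}(U_i) \cap \Yh'_{\oQ_p}$ may contain points \emph{outside} $g_i \cdot \Ge_m^N$ (that a point lies over $\pi_{\et}(\text{something in the torus})$ does not force it to lie in the torus), and the twisted naive Frobenius $\Phi_{g_i}$ is \'etale only on $g_i \cdot \Ge_m^N$. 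What is actually needed is $U_i := X \setminus \pi_{\et}\bigl(\Yh'_{\oQ_p} \setminus (g_i \cdot \Ge_m^N)\bigr)$, which by properness of $\pi_{\et}$ is open and guarantees $\pi_{\et}^{-1}(U_i) \subset g_i \cdot \Ge_m^N$. But then the covering property of $\{U_i\}$ no longer follows from merely covering $\Yh'_{\oQ_p}$: one must show that for each $x \in X$ \emph{all} points of the finite fiber $\pi_{\et}^{-1}(x)$ lie in a \emph{single} translate $g_i \cdot \Ge_m^N$. This is precisely what the paper's Lemmas \ref{cd-t39} and \ref{cd-t40} establish -- a density/open-image argument in $\GL_{N+1}(\eo_K)$ placing finitely many prescribed points simultaneously into the torus -- and the paper then proceeds pointwise, fixing $x$, choosing the embedding accordingly, and producing one open neighborhood $U \ni x$ at a time. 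Your global ``general-position plus quasi-compactness'' step does not by itself deliver the simultaneous-placement property, nor the corrected form of $U_i$.

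A secondary, more minor point: lifting the finite \'etale cover ``uniquely over $\oZ_p$'' via Grothendieck existence glosses over the fact that $\oZ_p$ is neither Noetherian nor complete. The paper sidesteps this by first descending $\eX$ and $\pi_k$ to $\eo_K$ for a finite extension $K/\Q_p$ (a complete Noetherian DVR), lifting there via SGA1, and only then base changing back to $\oZ_p$; your argument needs this detour (or an explicit appeal to the Henselian property of $\oZ_p$ together with limit arguments).
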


\begin{proof}
By Lemma \ref{cd-t7} the special fibre $\eX_k$ of $\eX$ is connected and $\eX$ is integral with $X$ open and dense in $\eX$.

For a vector bundle $\Eh$ in $\Bh_\emm (\eX_{\eo})$ there exist a good model $\Yh$ over $\oZ_p$ and a proper morphism $\pi : \Yh \to \eX$ such that $\pi (\Yh)$ contains an open subset of $\Xh$ and such that $\pi^*_k \Eh_k$ is a trivial bundle on the complete connected fibre $\Yh_k$. In particular $\pi^*_k \Eh_k$ is numerically flat. The image $\pi (\Yh)$ is closed in $\eX$ and contains the generic point of $\eX$. Hence $\pi : \Yh \to \eX$ and therefore also $\pi_k : \Yh_k \to \eX_k$ are surjective. Since $\pi^*_k \Eh_k$ is numerically flat, it follows that $\Eh_k$ is numerically flat as well. Hence $\Bh_\emm (\eX_{\eo})$ is a full subcategory of $\Bh^s (\eX_{\eo})$.

 For the proof of the reverse inclusion $\Bh^s (\eX_{\eo}) \subset \Bh_\emm (\eX_{\eo})$ a simple argument using Lemma \ref{cd-t13} shows that in addition we may assume that $X / \oQ_p$ is projective and that $\eX$ is a very good model: Cover $X$ by smooth quasiprojective open subvarieties $U$ and consider the very good models $\Yh$ from Lemma \ref{cd-t13} for the proper pairs $(\eX , U)$. Use the inclusions $\Bh^s (\Yh_{\eo}) \subset \Bh_\emm (\Yh_{\eo})$ for the models $\Yh$ to get trivializing covers over all $\Yh$'s and hence over $\eX$. Thus let $\Eh$ be a vector bundle in $\Bh^s (\eX_{\eo})$ for a very good model $\eX$ over $\oZ_p$ of a smooth projective variety $X$ over $\oQ_p$. Since the restriction $\Eh_k$ of $\Eh$ to the projective, reduced connected special fibre $\eX_k$ is numerically flat by assumption, Theorem \ref{cd-t2} gives us the following data:

A projective connected scheme $\Yh_k$ over $k$,  a finite \'etale covering $\pi_k : \Yh_k \to \eX_k$ and an integer $\nu \ge 0$ such that under the composition
\[
\varphi : \Yh_k \xrightarrow{F^{\nu}} \Yh_k \xrightarrow{\pi_k} \eX_k
\]
the pullback of $\varphi^* \Eh_k$ is a trivial bundle. Here $F$ is the Frobenius $ \Fr_{\kappa} \otimes_{\kappa} \id_k$ corresponding to an isomorphism $\Yh_k = \Yh_{\kappa} \otimes_{\kappa} k$ where $\Yh_{\kappa}$ is a projective geometrically connected scheme over a finite field $\kappa \subset k$ and $\Fr_{\kappa} = \Fr^r_p$ with $r = [\kappa : \F_p]$. By enlarging $\kappa$ if necessary, we may assume that $\eX$ descends to a projective model $\eX_{\eo_K}$ over the ring of integers $\eo_K$ in a finite extension $K$ of $\Q_p$ with residue field $\kappa$. Additionally we may arrange for $\pi_k$ to be the base extension of a finite \'etale morphism $\pi_{\kappa} : \Yh_{\kappa} \to \eX_{\kappa} := \eX_{\eo_{\kappa}} \otimes_{\eo_K} \kappa$. Using \cite{SGA1} IV, Th\'eor\`eme 1.10 we may lift $\pi_{\kappa} : \Yh_{\kappa} \to \eX_{\kappa}$ to a finite \'etale morphism $\pi_{\eo_K} : \Yh_{\eo_K} \to \eX_{\eo_K}$ whose special fibre is $\pi_{\kappa}$. The scheme $\Yh_{\eo_K}$ is projective over $\eo_K$ since $\eX_{\eo_K}$ has this property and since $\pi_{\eo_K}$ is finite. The generic fibre $X_K$ of $\eX_{\eo_K}$ is a smooth projective variety over $K$. Hence the generic fibre $Y_K$ of $\Yh_{\eo_K}$ is a smooth projective scheme over $K$ and in particular geometrically reduced. Since $\Yh_k$ is connected, projective and reduced (since $\eX_k$ is reduced and $\pi_k$ is \'etale), we have $\dim_k H^0 (\Yh_k , \Oh)= 1$. By semi-continuity, setting $Y = Y_K \otimes_K \oQ_p$ we obtain $\dim_{\oQ_p} H^0 (Y, \Oh) = 1$ and hence that $Y$ is connected. Thus $Y$ is a smooth projective variety over $\oQ_p$. We conclude that $\Yh_{\eo_K}$ and $\Yh := \Yh_{\eo_K} \otimes_{\eo_K} \oZ_p$ are very good models over $\eo_K$ and over  $\oZ_p$, respectively. Let $\pi = \pi_{\eo_K} \otimes \oZ_p : \Yh \to \eX$ be the base extension of $\pi_{\eo_K}$. We want to prove that our bundle $\Eh$ in $\Bh^s (\eX_{\eo})$ lies in $\Bh_k (\eX_{\eo})$. For this we have to show the following: For any point $x \in X (\oQ_p)$ there exist an open subscheme $U \subset X$ with $x \in U (\oQ_p)$ and a good model $\Zh$ over $\oZ_p$ with a proper morphism $\mu : \Zh \to \eX$ such that $\mu^{-1} (U) \to U$ is finite \'etale surjective and such that the bundle $\mu^*_k \Eh_k$ is a trivial bundle on $\Zh_k$. 

We have lifted $\pi_k : \Yh_k \to \eX_k$ to a proper morphism $\pi : \Yh \to \eX$ where $\pi = \pi_{\eo_K} \otimes_{\eo_K} \oZ_p$. However in general $F^{\nu}$ cannot be lifted to a morphism $F^{\nu} : \Yh \to \Yh$. Fortunately it can be lifted to a morphism $F^{\nu} : \Yh' \to \Yh$ where $(\Yh'_k)^{\red} \cong \Yh_k$ and this is enough for our purposes as we will see in a moment.

We need the following Lemma, whose proof will be given at the end of this section.

\begin{lemma} \label{cd-t39}
For a finite extension $K$ of $\Q_p$ with ring of integers $\eo_K$ consider a closed $\eo_K$-subscheme $\Wh$ of $\Pa^N_{\eo_K}$ together with finitely many points $y_1 , \ldots , y_r \in \Wh (\oK)$. Let $T = \Ge^N_{m,K}$ be the open subscheme of $\Pa^N_K$ where all coordinates are non-zero. Then there is a closed immersion $\Wh \hookrightarrow \Pa^N_{\eo_K}$ over $\eo_K$ such that $y_1 , \ldots , y_r \in \Wh (\oK) \cap  T (\oK)$. 
\end{lemma}

Applying the lemma to the projective $\eo_K$-scheme $\Wh = \Yh_{\eo_K}$ and to $\{ y_1 , \ldots , y_r \} = \pi^{-1} (x) \subset \Yh (\oK) = Y (\oK)$ we find a closed immersion $\Yh_{\eo_K} \subset \Pa^N_{\eo_K}$ for some $N \ge 1$ which we view as an inclusion such that $\pi^{-1} (x) \subset T (\oK)$. Define $\Yh'_{\eo_K}$ by the cartesian diagram
\[
\xymatrix{
\Yh'_{\eo_K} \ar@{^{(}->}[r] \ar[d]_{\tau_{\,\eo_K}} & \Pa^N_{\eo_K} \ar[d]^{F^{\nu}_{\eo_K}} \\
\Yh_{\eo_K} \ar@{^{(}->}[r]^{} & \Pa^N_{\eo_K}
}
\]
where $F_{\eo_K} ([x_0 : \ldots : x_N]) = [x^q_0 : \ldots : x^q_N]$ with $q = p^r , r = (\kappa : \F_p)$. Then $\tau_{\eo_K}$ is finite and $\tau^{-1}_K (V_K) \to V_K$ is finite \'etale, where $V_K = Y_K \cap T$. We now reduce to  $\kappa$ and define a morphism $i : \Yh_{\kappa} \to \Yh'_{\kappa}$ over $\kappa$ by the commutative diagram
\[
\xymatrix{
\Yh_\kappa \ar@/^1pc/[drr]^{\subset}  \ar[dr]^i \ar@/_1pc/[ddr]_{\Fr^{\nu}_{\kappa} }\\
 & \Yh'_{\kappa} \ar@{^{(}->}[r] \ar[d]_{\tau_{\kappa}} & \Pa^N_{\kappa} \ar[d]^{\Fr^{\nu}_{\kappa}} \\
 & \Yh_{\kappa} \ar@{^{(}->}[r]^{} & \Pa^N_{\kappa}
 }
\]
where $F_{\eo_K} \otimes \kappa = \Fr_{\kappa} = \Fr^r_p$. Since $\Yh_{\kappa}$ is reduced, it follows from \cite{dewe1} Lemma 19 that $i$ induces an isomorphism $i: \Yh_{\kappa} \iso \Yh^{'\red}_{\kappa}$. Base changing to $\oZ_p$ and setting $\Yh' = \Yh'_{\eo_K} \otimes \oZ_p$ etc. we get proper morphisms
\[
\Yh' \xrightarrow{\tau} \Yh \xrightarrow{\pi} \eX
\]
such that the pullback of $\Eh_k$ along the following composition is a trivial bundle
\begin{equation}
\label{eq:cd11}
\Yh_k \overset{i_k}{\iso} \Yh^{'\red}_k \hookrightarrow \Yh'_k \xrightarrow{\tau_k} \Yh_k \xrightarrow{\pi_k} \eX_k \; .
\end{equation}
Note here that $\Yh^{'\red}_k = (\Yh'_{\kappa} \otimes_{\kappa} k)^{\red} = \Yh^{'\red}_{\kappa} \otimes_{\kappa} k$ since $\kappa$ is a perfect field and that $\tau_k \verk i_k = (\Fr_{\kappa} \otimes \id_k)^{\nu} = F^{\nu}$. By construction the open subscheme $V = V_K \otimes \oQ_p$ of the smooth variety $Y$ contains $\pi^{-1} (x)$ viewed as a subset of the closed points of $V$. Since $\pi$ is finite and hence closed, it follows that $U = X \ohne \pi (Y \ohne V)$ is an open neighborhood of $x$ in $X$ such that $\tV = \pi^{-1} (U) \subset V$. Since $\tau^{-1} (V) \to V$ is finite \'etale and surjective, it follows that the morphism $\tau^{-1} (\tV) \to \tV$ is finite \'etale and surjective as well. Let $V'_0 \neq \emptyset$ be a connected component of $\tau^{-1} (\tV)$. Then the open subscheme $V'_0$ of $Y'$ is a smooth quasi-projective variety over $\oQ_p$. Applying Lemma \ref{cd-t8} to the morphism $\Yh' \to \spec \oZ_p$, the open subscheme $\tau^{-1} (\tV)$ of $\Yh'_{\oQ_p}$ and the connected component $V'_0$ of $\tau^{-1} (\tV)$ we obtain a proper pair $(\Yh'_0 , V'_0)$ and a closed finitely presented immersion $\varepsilon : \Yh'_0 \hookrightarrow \Yh'$ such that $\varepsilon^{-1} (\tau^{-1} (\tV)) = \tau^{-1} (\tV) \cap Y'_0 = V'_0$. In particular the restriction of $\varepsilon$ to a morphism
\[
\varepsilon^{-1} (\tau^{-1} (\tV)) \longrightarrow \tau^{-1} (\tV)
\]
is finite \'etale as the inclusion of a connected component. 

Next we apply Lemma \ref{cd-t13} to the proper pair $(\Yh'_0 , V'_0)$. We obtain a very good model $\Zh$ over $\oZ_p$ together with a proper morphism $\lambda : \Zh \to \Yh'_0$ such that $\lambda^{-1} (V'_0) \stackrel{\sim}\rightarrow V'_0$ is an isomorphism. Consider the composition 
\[
\mu : \Zh \xrightarrow{\lambda} \Yh'_0 \overset{\varepsilon}{\hookrightarrow} \Yh' \xrightarrow{\tau} \Yh \xrightarrow{\pi} \eX \; .
\]
By our previous considerations $\mu^{-1} (U) \neq \emptyset$ and the restriction of $\mu$ to a morphism $\mu^{-1} (U) \to U$ is finite \'etale, hence surjective, since $U$ is connected. Moreover, since $\Zh_k$ is reduced, the composition
\[
\Zh_k \xrightarrow{(\varepsilon \verk \lambda)_k} \Yh'_k
\]
factors over $\Yh^{'\red}_k$. Since the pull-back of $\Eh_k$ along the composition \eqref{eq:cd11} is trivial it follows that $\mu^*_k \Eh_k$ is a trivial bundle on $\Zh_k$. 
\end{proof}

It remains to prove Lemma \ref{cd-t39}. For this we show the following fact: 

\begin{lemma} \label{cd-t40}
Let $K$ be a finite extension of $\Q_p$ with ring of integers $\eo_K$. Consider the open subscheme $T = \Ge^N_{m,K}$ of points with non-zero coordinates in $\Pa^N_K$. Given finitely many points $y_1 , \ldots , y_r \in \Pa^N (\oK)$ there is an element $g \in \PGL_{N+1} (\eo_K)$ such that $g (y_1) , \ldots , g (y_r) \in T (\oK)$.
\end{lemma}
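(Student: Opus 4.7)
The plan is to find $g$ as a small perturbation of the identity matrix that manifestly lies in $\GL_{N+1}(\eo_K)$. After clearing denominators we may write each $y_j = [a_{j,0} : \cdots : a_{j,N}]$ with $a_{j,k} \in \eo_{\oK}$ and at least one $a_{j,k}$ a unit. The desired condition $g(y_j) \in T(\oK)$ then reads: for each $0 \le i \le N$ and $1 \le j \le r$, the coordinate $(gy_j)_i = \sum_k g_{i,k} a_{j,k}$ must be non-zero in $\oK$. This is a finite list of linear non-vanishing conditions on the entries of $g$.

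I would look for $g$ of the form $g = I + \pi_K^m A$, where $\pi_K$ is a uniformizer of $\eo_K$, $m$ is a positive integer to be chosen, and $A \in M_{N+1}(\eo_K)$ is to be determined. Such a $g$ reduces to the identity modulo $\pi_K$, so $\det g$ is a unit and $g$ lies automatically in $\GL_{N+1}(\eo_K)$. We compute
\[
(g y_j)_i \;=\; a_{j,i} + \pi_K^m \,(A y_j)_i.
\]
Let $M$ be the maximum of $v(a_{j,i})$ taken over the finitely many pairs $(i,j)$ with $a_{j,i} \neq 0$, where $v$ denotes the valuation on $\oK$ normalized so that $v(\pi_K) = 1$. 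Choosing an integer $m > M$ forces $v(\pi_K^m (A y_j)_i) \ge m > v(a_{j,i})$ for every such pair, so by the ultrametric inequality $(g y_j)_i$ has valuation exactly $v(a_{j,i}) < \infty$ and is automatically non-zero, independently of $A$.

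The remaining conditions, arising from pairs $(i,j)$ with $a_{j,i} = 0$, read $\sum_k A_{i,k} a_{j,k} \neq 0$. These involve only the $i$-th row of $A$, so the rows $A_i \in \eo_K^{N+1}$ can be chosen one at a time. For fixed $i$, the forbidden loci correspond to finitely many $\oK$-linear forms on $\oK^{N+1}$, each non-zero because $y_j$ is a non-zero projective point. Restricting each form to $K^{N+1}$ yields a non-zero $K$-linear map $K^{N+1} \to \oK$ (evaluation at the standard basis vector $e_k$ returns $a_{j,k}$, not all of which vanish), whose kernel is a proper $K$-subspace of $K^{N+1}$. Since $K$ is infinite, a finite union of proper $K$-subspaces cannot cover $K^{N+1}$; picking a $K$-point in the complement and scaling by a sufficient power of $\pi_K$ yields the desired $A_i \in \eo_K^{N+1}$. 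Assembling the rows produces $A$, and the corresponding $g$ satisfies all the requirements.

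The only real obstacle is coupling the demand of integral invertibility with the simultaneous non-vanishing of coordinates at the $y_j$; the ansatz $g = I + \pi_K^m A$ decouples these by making the former automatic and reducing the latter to a purely linear problem over $\eo_K$.
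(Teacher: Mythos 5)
Your proof is correct, and it takes a genuinely different route from the paper's. The paper lifts each $y_\alpha$ to $v_\alpha \in \oK^{N+1}\setminus\{0\}$, observes that the conditions "$(Av_\alpha)_i\neq 0$" cut out the complement of a finite union of proper $K$-subspaces of $M_{N+1}(K)$, and then invokes a topological density argument: this complement is dense in the valuation topology, and $\GL_{N+1}(\eo_K)$ is a non-empty open subset of $M_{N+1}(K)$, so the two meet. You instead replace the density argument with an explicit perturbative ansatz $g=I+\pi_K^m A$ with $A\in M_{N+1}(\eo_K)$: this makes integral invertibility automatic (reduction mod $\pi_K$ is the identity), and then you solve the non-vanishing conditions row by row, using only the elementary fact that a finite union of proper $K$-subspaces cannot cover $K^{N+1}$ because $K$ is infinite. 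A pleasant bonus of your approach is that the pairs $(i,j)$ with $a_{j,i}\neq 0$ are handled for free by the ultrametric inequality once $m$ is chosen large, so the subspace-avoidance is only needed at the coordinates where the $y_j$ already vanish. The paper's argument is shorter and treats all entries of $A$ at once; yours is more elementary in that it never appeals to openness or density in the valuation topology, and it is arguably more transparent about where integrality comes from.
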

\begin{proof}
Lift $y_{\alpha}$ to a vector $v_{\alpha} = (y_{\alpha , 0} , \ldots , y_{\alpha , N}) \in \oK^{N+1} \ohne \{ 0 \}$ for all $1 \le \alpha \le r$. 
Denote by $M_{N+1}(K) $ the algebra of all $(N+1) \times (N+1)$-matrices with entries in $K$. 
For $0 \le i \le N$ and $1 \le \alpha \le r$ consider the $K$-linear maps
\[
l_{\alpha , i} : M_{N+1} (K) \longrightarrow \oK \; , \; l_{\alpha,i} (A) = i\text{-th component of the vector} \; A v_{\alpha} \; .
\]
Then $\ker l_{\alpha , i}$ is at least $1$-codimensional in the $K$-vector space $M_{N+1} (K)$ and hence
\[
U = M_{N+1} (K) \ohne \bigcup_{i, \alpha} \ker l_{\alpha , i}
\]
is dense in $M_{N+1} (K)$ equipped with the topology induced by the valuation topology of $K$. It follows that $U$ contains an element $A$ of the open subset $\GL_{N+1} (\eo_K)$ of $M_{N+1} (K)$. For the points $y_{\alpha} = [y_{\alpha , 0} : \ldots : y_{\alpha , N}]$ of $\Pa^N (\oK)$ and the element $g := A \, \mod \eo^{\times}_K$ of $\PGL_{N+1} (\eo_K)$ we have $g (y_{\alpha}) \in T (\oK)$ for $1 \le \alpha \le r$. 
\end{proof}

\begin{proofof} {\bf Lemma \ref{cd-t39}}
Choose a closed immersion $\Wh \subset \Pa^N_{\eo_K}$ over $\eo_K$ and view $y_1 , \ldots , y_r$ as points of $\Pa^N (\oK)$. For $g$ as in Lemma \ref{cd-t40} the embedding $ \Wh \subset \Pa^N_{\eo_K} \xrightarrow{g} \Pa^N_{\eo_K}$ satisfies our claim.
\end{proofof}

\section{ $P$-divisible pullbacks on cohomology} \label{sec:cd6}

Let $K$ be a complete discretely valued field  with discrete valuation ring $\eo_K$ of mixed characteristic and with perfect residue field $k$. 
% and residue field $k$.
%, and let $S$ be the spectrum of $\eo_K$. 
For any $\eo_K$-scheme $\eX$ we denote the generic fiber by $\eX_K$. 

Let $\eX$ be a  reduced scheme which is proper and flat over $\eo_K$, and let $f: \Yh \rightarrow \eX$ be a proper morphism. We say that $f$ is finite \'etale surjective over an open subset $U$ of $\eX$ if the restriction $f|_{f^{-1} U}: f^{-1} U \rightarrow U$ is finite, \'etale and surjective. For any finite subset $F$ of closed points in the generic fiber $\eX_K$ we say that $f$ is finite \'etale around $F$, if there exists a  smooth, quasi-projective open subscheme $U$ of $\eX_K$ such that $ F \subset U$ and such that $f$ is finite \'etale surjective over $U$.  

Our goal in this section is to make pullback maps on coherent cohomology $p$-divisible on a proper covering which is finite \'etale around a given finite subset of the generic fiber.

\begin{theorem} \label{bhatt} Let $\eX$ be a reduced and proper scheme over $\eo_K$, and let $F \subset \eX$ a finite set of closed points which is contained in a connected, quasi-projective, smooth open subset of the generic fiber $\eX_K$. 
There is an $\eo_K$-scheme $\Zh$ and a proper $\eo_K$-morphism $f:\Zh \rightarrow \eX$ such that

i) For all $i \ge 1$, the pullback $f^\ast: H^i(\eX, {\cal O}_\eX) \rightarrow H^i (\Zh, {\cal O}_\Zh)$ is divisible by $p$ as a group homomorphism.

ii) The map  $f$ is finite \'etale surjective around $F$. 

\end{theorem}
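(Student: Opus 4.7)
The plan is to follow the proof of Bhatt's Theorem 1.2 from \cite{bh1}, tracking the behavior of each geometric modification near the finite set $F$. Since $F$ lies in the smooth, quasi-projective, connected open subset $U \subseteq \eX_K$, the aim is to arrange that the resulting proper morphism $f:\Zh \to \eX$ is an isomorphism over some open subscheme $U' \subseteq U$ still containing $F$. An isomorphism is finite \'etale, so (ii) would follow; meanwhile (i) is obtained exactly from Bhatt's cohomological argument, which is not altered in substance.

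I would first recall the inductive structure of Bhatt's argument. Using the fact that $H^i(\eX,\Oh_\eX)$ is a finitely generated $\eo_K$-module (by properness), one reduces (i) to making a single class in cohomology divisible by $p$ after a proper pullback, and then iterates this to handle all classes in all positive degrees. Each individual step is a proper cover built from three types of operations: de Jong style alterations, blowups of nowhere dense closed subschemes, and normalizations (including normalization in purely inseparable generic extensions, used for Frobenius manipulations on the special fiber).

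The crucial new input is the refined alteration theorem of Bhatt and Snowdon in \cite{bh2}, which allows one to choose a de Jong alteration so that its restriction over a prescribed smooth quasi-projective open is an isomorphism. Applied with (an open subset of) $U$ as the prescribed locus, each alteration in Bhatt's construction can be arranged to be an isomorphism over a smooth open neighbourhood of $F$. Blowups can be arranged with centres disjoint from a possibly slightly smaller open $U' \subseteq U$ still containing $F$, since $F$ is a finite set of closed points in a smooth $\oK$-scheme. Normalizations in purely inseparable extensions needed for Frobenius-type arguments live on the special fiber and are isomorphisms on the generic fiber, hence a fortiori over $U$.

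The main obstacle will be to carry this through the full induction in a compatible way: at each inductive step the open over which the new modification is an isomorphism may shrink. Since only finitely many steps occur and $F$ is finite, one can iteratively shrink $U$ to avoid the union of the centres of blowups and the loci where the \cite{bh2}-alterations fail to be isomorphic, always keeping $F$ inside the remaining open. The composition $f:\Zh \to \eX$ is then an isomorphism over a final open $U' \subseteq U$ with $F \subseteq U'$, verifying (ii). Part (i) follows verbatim from the cohomological argument in \cite{bh1}, which depends only on the structural features of the modifications (alterations, blowups, normalizations) and not on any incidental isomorphism property over $U$, so replacing each step by its \cite{bh2}-enhanced counterpart does not disturb the $p$-divisibility of $f^\ast$ on cohomology.
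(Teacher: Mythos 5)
Your plan hinges on arranging $f:\Zh\to\eX$ to be an \emph{isomorphism} over an open $U'\ni F$, and you justify this by characterizing Bhatt's construction as built only from alterations, blowups, and normalizations, all of which can be made trivial near $F$. That characterization is wrong, and this is a genuine gap: the step that actually produces $p$-divisibility of $f^\ast$ on $H^1$ is pullback along multiplication by $p$ on $\mathrm{Pic}^0$ of a relative (stable) curve, which is a finite \'etale cover of degree $p^{2g}$ over the smooth locus and in particular is nowhere an isomorphism on the generic fibre. Concretely, in the inductive step the paper factors $\eX\to\Th$ as a compactified elementary fibration, identifies $R^1\phi_*\Oh_{\eX}$ with $\mathrm{Lie}\,\mathrm{Pic}^0_{\eX/\Th}$, and obtains $p$-divisibility of the map on $R^1$ precisely because the new cover of the relative curve is the pullback along $[p]$ on the Jacobian. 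If your $f$ were an isomorphism over an open containing a full fibre of $\phi$ near $F$, the induced map on $R^1\phi_*\Oh$ there would be an isomorphism, not divisible by $p$; and more generally your reading gives no mechanism by which $H^1$ becomes divisible at all. What the statement (ii) actually requires, and what the paper proves, is only that $f$ is finite \'etale of (possibly large) degree near $F$, not degree one.

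The second thing your proposal misses entirely is the main technical difficulty that the paper's proof spends its effort on: having produced a finite \'etale cover of the smooth relative curve over an open $U'\subset\Th'_K$ by Jacobian $p$-multiplication, one must extend it to a proper morphism of stable curves over a proper cover $\Th''\to\Th'$ that is still finite \'etale surjective over $U'$. This is the content of the paper's Theorem \ref{thm:divisible}, proved via the moduli stack $\overline{\mathcal M}_{g,n}$ of pointed stable curves, a level structure cover as in de Jong, and the valuative criterion with an extension lemma for stable curves over DVRs. The refined alteration result of Bhatt--Snowden (\cite{bh2}, used via their notion of stable pointed compactified elementary fibration and their statement 5.6) is needed, as you surmise, to set up the fibration compatibly with $U$, but by itself it does not bridge the gap between a finite \'etale cover of the generic smooth part and the proper cover of $\eX$ required in the theorem. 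In short, (i) does \emph{not} follow ``verbatim from the cohomological argument in \cite{bh1}'' once you impose (ii); the interplay between the two is exactly what makes the theorem nontrivial and what Theorem \ref{thm:divisible} is designed to handle.
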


 Note that this  result is a strengthening of \cite{bh1}, Theorem 1.2 for schemes over $\Spec \, {\cal{O}}_K$. The new aspect here is condition ii). We proceed as in the proof of \cite{bh1}, Proposition 2.9 paying close attention to the nature of all alterations involved in Bhatt's proof. For this purpose the notes \cite{bh2} will be a crucial tool. We begin by showing a preliminary result which we hope is of independent interest. It relies on de Jong's arguments in \cite{dej1}, section 5. 
 
 \begin{theorem}\label{thm:divisible} Let $R$ be a discrete valuation ring with quotient field $Q$, and put $S = \Spec R$. 
 Let $\Th$ be an integral  scheme together with a morphism $\Th \rightarrow S$ of finite type, and let $\eX$ be a stable, $m$-pointed curve over $\Th$ in the sense of \cite{kn}, Definition 1.1.
 Assume that there exists a non-empty normal open subscheme $U$ of $\Th$ contained in the generic fiber $\Th_Q$ such that $\eX_U = \eX \times_\Th U$ is smooth over $U$. Moreover, let $\varphi_U: \Yh_U \rightarrow \eX_U$ be a finite \'etale $U$-morphism. 
 
 Then there exists an integral,  normal scheme $\Th^\ast$ together with a proper $S$-morphism $\alpha: \Th^\ast \rightarrow \Th$ and a stable $n$-pointed curve $\Yh^\ast$ over $\Th^\ast$ for some $n \geq m$ such that the following properties hold for $U^\ast = \alpha^{-1} (U) \subset \Th^\ast$: 
 
 i) The map  $\alpha|_{U^\ast}: U^\ast \rightarrow U$ is finite, \'etale and surjective.
 
 ii) $\Yh^\ast_{U^\ast} = \Yh^\ast \times_{\Th^\ast} U^\ast$ is $U^\ast $-isomorphic to  the base change of $\Yh_U$ with $U^\ast \rightarrow U$.
 
 iii) There exists a morphism of $\Th^\ast$-curves 
 \[\varphi: \Yh^\ast \rightarrow \eX_{\Th^\ast} = \eX \times _\Th \Th^\ast \]
 extending the given morphism $\varphi_U \times_U U^\ast: \Yh^\ast_{U^\ast}  \simeq \Yh_U \times_U U^\ast \rightarrow \eX_U \times_U U^\ast$.

 \end{theorem}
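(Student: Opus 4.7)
The proof is moduli-theoretic, following the strategy of \cite{dej1}, Section 5. The plan is to interpret the finite \'etale cover $\varphi_U \colon \Yh_U \to \eX_U$ as a classifying map into a moduli space of stable pointed curves, extend this map over a proper modification $\Th^*$ of $\Th$, and recover $\Yh^*$ together with $\varphi$ from the resulting universal family.

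Concretely, let $\sigma_1, \ldots, \sigma_m \colon \Th \to \eX$ be the given marked sections. The preimages $\varphi_U^{-1}(\sigma_i|_U)$ are finite \'etale over $U$. I would first choose a finite Galois \'etale cover $U^\sharp \to U$ simultaneously trivializing all these preimages; over $U^\sharp$ each preimage splits into disjoint sections of $\Yh_{U^\sharp}/U^\sharp$, producing at least $m$ of them. Possibly enlarging $U^\sharp$ further to add auxiliary sections (so that $2g-2+n > 0$, where $n$ is the total number of sections and $g$ is the common fiber genus), one obtains on $\Yh_{U^\sharp}$ the structure of a smooth stable $n$-pointed curve. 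Let $\Th_1$ be the normalization of $\Th$ in the function field of $U^\sharp$; since $\Th$ is an integral scheme of finite type over the Nagata ring $R$, it is Nagata, so the morphism $\alpha_1 \colon \Th_1 \to \Th$ is finite, hence proper, and normality of $U$ gives $\alpha_1^{-1}(U) = U^\sharp$. The smooth stable $n$-pointed family $\Yh_{U^\sharp}/U^\sharp$ then defines a classifying morphism $U^\sharp \to \overline{\mathcal{M}}_{g,n}$ into the proper Deligne--Mumford--Knudsen moduli stack. Passing if necessary to a further finite \'etale cover of $U^\sharp$ that rigidifies the moduli via a level structure so as to land in a projective coarse space $\overline{M}$, taking the closure of the graph of the resulting rational map from $\Th_1$ to $\overline{M}$, and normalizing, produces an integral normal scheme $\Th^*$, proper over $\Th$, such that $U^* := \alpha^{-1}(U)$ is finite \'etale over $U$. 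Pulling back the universal stable pointed curve then yields the desired stable $n$-pointed curve $\Yh^* \to \Th^*$ with $\Yh^*_{U^*} \cong \Yh_U \times_U U^*$.

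Finally, I would construct $\varphi \colon \Yh^* \to \eX_{\Th^*}$ extending $\varphi_{U^*}$ by closing up its graph inside $\Yh^* \times_{\Th^*} \eX_{\Th^*}$. Call the closure $\Gamma$, and let $p_1 \colon \Gamma \to \Yh^*$ and $p_2 \colon \Gamma \to \eX_{\Th^*}$ be the two projections. Then $p_1$ is proper and an isomorphism over $U^*$. The main obstacle of the entire argument is showing that $p_1$ is an isomorphism globally: this uses that both $\Yh^*$ and $\eX_{\Th^*}$ are stable pointed families over $\Th^*$ whose markings correspond under $\varphi_{U^*}$ by the first step, and it invokes the specific analysis of nodal fibers from \cite{dej1}, Section 5, to exclude vertical components of $\Gamma$ over $\Yh^*$. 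Once this is established, $\varphi := p_2 \circ p_1^{-1}$ is the required extension. The two delicate places in the plan are the passage from the coarse moduli space back to the universal family (requiring level structures and hence additional finite base changes of $\Th_1$) and this final graph-closure argument.
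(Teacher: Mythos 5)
Your proposal tracks the paper's argument quite closely for most of the way: passing to a finite cover of $\Th$ that is finite \'etale over $U$ so that the preimages of the marked sections split into disjoint sections of $\Yh_U$, recognizing $\Yh_U$ as a smooth stable $n$-pointed curve, using the moduli stack $\overline{\mathcal{M}}_{g,n}$ together with de Jong's finite \'etale rigidification $M \rightarrow \mathcal{M}_{g,n}[1/l]$ and its projective compactification $\overline{M}$, and then taking a closure of the classifying map's graph inside $\overline{M}_S \times_S \Th$ to produce a proper cover of $\Th$ that is finite \'etale over $U$ and carries a universal stable family. All of this matches the paper in substance; whether one reaches the intermediate cover via normalization in a function field or directly by the closure in $\overline{M}_S\times_S\Th$ is immaterial.

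The genuine gap is in the final step: extending $\varphi_{U^*}$ to a morphism $\varphi\colon \Yh^\ast \to \eX_{\Th^\ast}$ over the $\Th^\ast$ already obtained from the moduli closure, by taking the closure $\Gamma$ of its graph and asserting that $p_1 \colon \Gamma \to \Yh^\ast$ is an isomorphism because ``vertical components can be excluded.'' This claim is not justified and is false in general. Over a base $\Th^\ast$ of dimension at least two, a rational map from a normal projective family of curves into a projective scheme can have nonempty indeterminacy locus; $p_1$ is then a proper birational modification of $\Yh^\ast$ that is not finite, and the hypothesis that source and target are stable pointed families with matching markings does not force finiteness. The only case where extension is automatic is over a trait, and that is precisely the content of \cite{liulo}, Proposition 4.4. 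The paper handles this correctly by performing a \emph{further proper modification of the base}, $\Th^\ast \to \Th_0$, following \cite{dej2}, 3.9--3.13: the locus where the morphism extends is represented by a scheme over $\Th_0$, its properness is verified with the valuative criterion (which reduces to the DVR case and invokes \cite{liulo}, Prop. 4.4), and $\Th^\ast$ is taken to be (the normalization of a dominant component of) that scheme. Your outline omits this additional modification, and the graph-closure argument over the unmodified $\Th^\ast$ cannot replace it.
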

 
 \begin{proof} Let $\sigma_1, \ldots, \sigma_m: U \rightarrow \eX_U$ denote the given sections restricted to $U$.
 After replacing $\Th$ by a finite cover $\Th_1 \rightarrow \Th$ which is finite  \'etale surjective over $U$ and  $\eX$ and $\Yh$ by their  base changes with $\Th_1$, we may assume that $\Yh_U$ has $n$ pairwise disjoint sections $\tau_1, \ldots, \tau_n$ over $U$ for some $n \geq m$ such that $\beta_U: \Yh_U \rightarrow \eX_U$ maps sections to sections in such a way that for all points $u$ in $U$ the preimage $\beta_U^{-1}(\sigma_i{u})$ consists only of points marked by the sections $\tau_1, \ldots, \tau_n$.

By Nagata compactification, we find a proper $\eX$-scheme $\Yh$ containing $\Yh_U$ as an open subscheme. Hence  we have a commutative diagram
\[
\xymatrix{
  \Yh_U \ar[d] \ar@{^{(}->}[r] & \Yh \ar[d]\\
\eX_U \ar@{^{(}->}[r] & \eX.
}
\]
Let $g$ be the genus of the fibers of $\Yh_U/U$. 
Now we argue as in \cite{dej1}, 5.13.  Let $\overline{\mathcal{M}}_{g,n}$ be the algebraic stack over $\mathbb{Z}$ classifying stable $n$-pointed curves of genus $g$, and let $\mathcal{M}_{g,n}$ be the open substack classifying smooth $n$-pointed curves. The smooth  $n$-pointed curve $\Yh_U$  gives rise to a $1$-morphism $U \rightarrow {\mathcal{M}}_{g,n}$. We choose a prime $l $ invertible in $R$, and consider the finite \'etale cover $M \rightarrow \mathcal{M}_{g,n}[1/l]$ of \cite{dej1}, 2.24. Here $M$ is a scheme over $\Z[1/l]$. This cover can be extended to a morphism $\overline{M} \rightarrow \overline{\mathcal{M}}_{g,n}[1/l]$ over $\Z[1/l]$, where $\overline{M}$ is a projective scheme over $\Z[1/l]$.
As usual, we denote the base changes of $M$ and $\overline{M}$ with $S$ by  $M_S$ and $\overline{M}_S$. 

The fiber product ${M}_S \times_{{\mathcal{M}}_{g,n,S}} U $ (which is a scheme) is finite and \'etale over $U$. Let $U_0$ be a connected component dominating $U$. It is irreducible, normal and  finite \'etale surjective over $U$.  Now consider the canonical immersion ${M}_S \times_{{\mathcal{M}}_{g,n,S}} U \rightarrow \overline{M}_S \times_S \Th$, and let $\Th_0$ be the closure of the image of ${U}_0$ in $\overline{M}_S \times_S \Th$ with the reduced induced structure. Then $\alpha_0:  \Th_0 \rightarrow \Th$ is projective since $\overline{M}_S$ is projective, and $\alpha_0^{-1}(U) \simeq U_0$ is finite \'etale surjective over $U$. 

By construction, there exists a stable $n$-pointed curve $\Yh_0$ over ${\Th_0}$ and an isomorphism of stable $n$-pointed curves $\Yh_0 \times_{\Th_0} U_0  \rightarrow \Yh_U \times_U U_0$. 
Now we argue as in \cite{dej2}, 3.9-3.13 to define a proper map $\Th^\ast \rightarrow \Th_0$ which is an isomorphism  on the preimage $U^\ast$ of $U_0$ such that, putting $\Yh^\ast = \Yh_0 \times_{\Th_0} \Th^\ast$, the morphism $\Yh^\ast \times_{\Th^\ast} U^\ast \simeq \eX_U \times_U  U^\ast$ extends to a morphism $\Yh^\ast \rightarrow  \eX \times_\Th \Th^\ast$.  Properness is shown by the valuative criterion which amounts to checking that a similar extension result holds for stable curves over discrete valuation ring. This follows as in the proof of   \cite{liulo}, Proposition 4.4.

By construction, $\Th^\ast$ is integral, and by replacing it with its normalization, we may assume that it is normal. If we define $\alpha: \Th^\ast \rightarrow \Th_0 \rightarrow \Th$ as the composition of the two maps constructed previously, our claim follows. 
\end{proof}

Now we can show the result stated at the beginning of this section.

 \begin{proof}{ \bf of Theorem \ref{bhatt}}
 We may replace $\eX$ by the closure of $U$ in $\eX$ with its reduced structure, and thus assume that $\eX$ is integral. Then $\eX$ is flat over $S = \Spec \eo_K$. 
 We proceed as in \cite{bh1}, Proposition 2.10 by induction over the dimension of the generic fiber $\eX_K$. 
 If $\eX_K$ is a curve, the result follows as in the proof of  \cite{dewe1}, Theorem 11.
Therefore we may assume that $\eX_K$ has dimension $d>1$. 
Note that in order to prove our claim, we may replace $\eX$ by any reduced scheme $\eX'$ such that there exists a proper $S$-morphism $h: \eX' \rightarrow \eX$ which is finite \'etale surjective over an open subset $U$ of the generic fiber containing $F$. 
Moreover, after replacing $\eX'$ by the closure of an irreducible component of $h^{-1}(U)$, we may additionally take any such $\eX'$ to be irreducible. 

By \cite{bh2}, 5.6 we may therefore assume that 
$\eX \rightarrow S$ factors through a projective map $\phi: \eX \rightarrow \Th$ with the following properties: 
$\Th$ is integral, proper and flat over $S$, and there exists a smooth  quasi-projective open $V \subset \eX_K$ containing $F$ and a smooth quasi-projective open $U \subset \Th_K$ such that $V$ is mapped to $U$ via $\phi$ and such that 
$(V, \eX) \rightarrow (U,\Th)$ is a stable $m$-pointed compactified elementary fibration 
in the sense of \cite{bh2}, Definition 5.1.
This implies that $\phi_{|U}: \eX_U \rightarrow U$ is smooth and that all fibers of $\phi$ are geometrically connected of equidimension one with dense smooth locus. Moreover, there are $m$ sections of $\phi: \eX \rightarrow \Th$ for some $m \geq 0$ such that $\phi$ is a relative stable $m$-pointed curve.

Following roughly the notation of \cite{bh1}, Proposition 2.10, we denote by $f: \eX \rightarrow S$ and $f': \Th \rightarrow S$ the structure morphisms. By induction, we know that our claim holds for $\Th$. Hence there exists a proper $S$-morphism $\pi': \Th' \rightarrow \Th$ which is finite \'etale surjective around $\phi(F)$ such that, with $g' = f' \circ \pi'$, 
 the natural homomorphism  $\pi^{' \ast}: R^i f'_\ast \Oh_\Th \rightarrow R^i g'_\ast \Oh_{\Th'}$ is divisible by $p$ (as a homomorphism of sheaves of abelian groups). After shrinking $U$ if necessary, we may assume that $\pi'$ is finite \'etale surjective over $U$. As explained above, we may further assume that $\Th'$ is integral. 
 The base change $\phi': \eX' = \eX \times_\Th \Th' \rightarrow \Th'$ is a stable $n$-pointed curve. Put $U' = \pi^{' -1} (U)$. Then $\eX'_{U'} = \eX' \times_{\Th'} U'$ is smooth and projective over $U'$. 

The functor $\mbox{Pic}^0_{\eX'/\Th'}$ is represented by a semi-abelian $\Th'$-scheme, and  $\mbox{Pic}^0_{\eX'_{U'}/U'}$ is an abelian scheme, see \cite{blr},  section 9.4. We denote the  dual abelian scheme by $\Alb_{\eX'_{U'}/U'}$. Fix a section $x \in \eX'_{U'}(U')$. By the universal property of the Picard scheme there exists a canonical $U'$-morphism 
\[\eX'_{U'} \rightarrow \Alb_{\eX'_{U'}/U'}\]
sending $x$ to the zero section. We  define $\Yh_{U'}$ as the base change with respect to the $p$-multiplication on the abelian scheme $\Alb_{\eX'_{U'}/U'}$:
\[
\xymatrix{
  \Yh_{U' }\ar[d]^{q} \ar[r] & \Alb_{\eX'_{U'}/U'}  \ar[d]^p \\
\eX'_{U'} \ar[r] & \Alb_{\eX'_{U'}/U'}
}
\]
The map $q$ is finite and \'etale, since $U'$ is contained in the generic fiber $\Th'_K$ which has characteristic zero. 
Note that the geometric fibers of $\Yh_{U'}$ are connected as pullbacks of a curve under the $p$-multiplication map on the Jacobian. 

By Theorem \ref{thm:divisible} we find an integral, normal $\Th''$ together with  a proper $S$-morphism $\pi'': \Th'' \rightarrow \Th'$ which is finite \'etale surjective over $U'$ and a stable $m$-pointed curve $\phi'': \eX'' \rightarrow \Th''$ together with a morphism $\eX'' \rightarrow  \eX'  $ sitting in the commutativie diagram
\[
\xymatrix{
  \eX'' \ar[d]^{\phi''} \ar[r] & \eX'  \ar[d]^{\phi'}\\
\Th'' \ar[r]^{\pi''}& \Th'
}
\]
 such that we have the following properties for  $U'' = (\pi'')^{-1} (U') \subset \Th''$:
The restriction of $\pi''$ to $U''$ is finite \'etale surjective,  the scheme $\eX'' \times_{\Th'' } U'' $ is isomorphic to $\Yh_{U' }\times_{U'} U''$, and  the induced morphism $a: \eX'' \rightarrow \eX' \times_{\Th'} \Th'' = \eX'_{\Th''}$  extends the base change of $q$ to $U''$. 
Note that there is a natural identification of $R^1 \phi'_\ast \Oh_{\eX'}$ (or $R^1 \phi''_\ast \Oh_{\eX''}$, respectively) with the Lie algebra of the semiabelian $\Th'$- scheme $\mathrm{Pic}^0(\eX'/\Th')$ (or the $\Th''$-scheme $\mathrm{Pic}^0(\eX''/\Th'')$, respectively), see \cite{blr}, 8.4, Theorem 1. The map on Lie algebras induced by 
\[\mathrm{Pic}^0 (a): \mathrm{Pic}^0(\eX'_{\Th''} / \Th'' )  \rightarrow \mathrm{Pic}^0(\eX''/  \Th'').\]
 is divisible by $p$ after restriction to $U''$, hence everywhere by \cite{fach}, Proposition I.2.7. Hence the natural homomorphism
  \[(\pi'')^\ast R^1 \phi'_\ast \Oh_{\eX'} \rightarrow R^1 \phi''_\ast \Oh_{\eX''}\]
   is divisible by $p$ as a group homomorphism on $\Th''$. 
As in the proof of \cite{bh1}, Proposition 2.10, we can now argue with a splitting of the Leray spectral sequence to deduce that  the map $ \eX'' \rightarrow \eX' \rightarrow \eX$ 
satisfies part i) of our claim. Since it is finite and \'etale over $\eX_U$ we also proved part ii). 
\end{proof}
 
In the next section we will need the following variant of Theorem \ref{bhatt}.

\begin{cor}
\label{cd-t44}
Let $(\Yh , W)$ be a proper pair over $\oZ_p$ and let $\emptyset \neq F \subset W$ be a finite set of closed points. Then there are a very good model $\Yh'$ over $\oZ_p$ and a proper finitely presented $\oZ_p$-morphism $f : \Yh' \to \Yh$ such that \\
i) For all $i \ge 1$, the pullback $f^* : H^i (\Yh , \Oh) \to H^i (\Yh' , \Oh)$ is divisible by $p$ as a group homomorphism.\\
ii) $f^{-1} (V) \to V$ is finite \'etale surjective  for some open set $F \subset V \subset W$.
\end{cor}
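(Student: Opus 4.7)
The plan is to reduce to Theorem \ref{bhatt} by descending to a finite extension of $\Q_p$, and then improve the resulting cover to a very good model using the model-theoretic lemmas of Section \ref{sec:cd2}.

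Since $\Yh$ is of finite presentation over $\oZ_p = \varinjlim_K \eo_K$, we can find a finite extension $K$ of $\Q_p$ and a finitely presented flat proper $\eo_K$-scheme $\Yh_0$ together with a smooth quasi-projective geometrically integral open $W_0 \subset \Yh_{0,K}$ and a finite set $F_0 \subset W_0$ of closed points (with residue field $K$) that base-change along $\eo_K \to \oZ_p$ to the given triple $(\Yh, W, F)$. Theorem \ref{bhatt} applied to the reduced proper $\eo_K$-scheme $\Yh_0^{\red}$ with the finite set $F_0 \subset W_0 \subset \Yh_{0,K}^{\red}$ (note that $W_0$ is smooth, hence reduced, and geometrically integral, hence connected) yields a proper morphism $g_{0,\red}: \Zh_0 \to \Yh_0^{\red}$ whose pullback on $H^i(\Yh_0^{\red}, \Oh)$ is divisible by $p$ for all $i \ge 1$, and which is finite \'etale surjective over some open $V_0 \subset W_0$ containing $F_0$.

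Composing with the closed immersion $\iota: \Yh_0^{\red} \hookrightarrow \Yh_0$ and base-changing along the flat extension $\eo_K \to \oZ_p$, we obtain a proper morphism $g: \Zh \to \Yh$ which is finite \'etale surjective over $V := V_0 \otimes_K \oQ_p \subset W$ (an open containing $F$). Its pullback $g^*$ on coherent cohomology is divisible by $p$: at the $\eo_K$-level we have $g_0^* = g_{0,\red}^* \circ \iota^*$ with $g_{0,\red}^*$ divisible by $p$, and flat base change for proper morphisms preserves this divisibility after tensoring with $\oZ_p$.

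To upgrade $\Zh$ to a very good model, observe that $V$ is irreducible as a non-empty open in the irreducible $W$, so every connected component of $g^{-1}(V)$ maps finite \'etale surjectively onto $V$. Pick such a component $V'$; it is a smooth quasi-projective (geometrically) integral variety over $\oQ_p$. Lemma \ref{cd-t8} produces a proper pair $(\Zh_\flat, V')$ together with a closed immersion $j: \Zh_\flat \hookrightarrow \Zh$ satisfying $j^{-1}(g^{-1}(V)) = V'$, and Lemma \ref{cd-t13} then yields a very good model $\Yh'$ over $\oZ_p$ with a proper morphism $\pi_1: \Yh' \to \Zh_\flat$ inducing an isomorphism $\pi_1^{-1}(V') \iso V'$. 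Setting $f := g \circ j \circ \pi_1 : \Yh' \to \Yh$, property (i) holds because $f^*$ factors through the $p$-divisible $g^*$, and property (ii) holds because $f^{-1}(V) = \pi_1^{-1}(V') \iso V'$ maps finite \'etale surjectively onto $V$. The main technical step is the descent that enables the use of Theorem \ref{bhatt}; the subsequent improvement to a very good model is a routine invocation of the lemmas of Section \ref{sec:cd2}.
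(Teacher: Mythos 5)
Your proof is correct and follows essentially the same route as the paper: descend to a finite extension $\eo_K$, apply Theorem \ref{bhatt}, base extend back, and then use Lemmas \ref{cd-t8} and \ref{cd-t13} to upgrade the result to a very good model. The only (harmless) variation is in how you arrange for Theorem \ref{bhatt}'s reducedness hypothesis to hold: the paper first invokes Lemma \ref{cd-t13} to replace $\Yh$ by a very good (hence integral, hence reduced) model before descending, whereas you instead pass to $\Yh_0^{\red}$ over $\eo_K$ and compose with the closed immersion $\Yh_0^{\red}\hookrightarrow\Yh_0$, which works equally well since $W$ is smooth (so reduced) and the factorization $g_0^* = g_{0,\red}^*\circ\iota^*$ preserves $p$-divisibility.
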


\begin{proof}
Using Lemma \ref{cd-t13} we may assume that $\Yh$ is a very good model. After descending $\Yh$ to $\eo_K$ for some finite extension $K$ of $\Q_p$ we can apply Theorem \ref{bhatt}. Base extending back to $\oZ_p$ we have obtained a proper morphism $f : \Yh' \to \Yh$ with properties i) and ii). Using Lemma \ref{cd-t8} we may assume that $(\Yh' , f^{-1} (V))$ is a proper pair. We can now apply Lemma \ref{cd-t13} to improve $\Yh'$ to a very good model.
\end{proof}

\section{Lifting trivializing covers} \label{sec:cd5}
Let $X$ be a smooth complete variety over $\oQ_p$ and $\eX$ a model of $X$ over $\oZ_p$. By Lemma \ref{cd-t7} the special fibre $\eX_k$ of $\eX$ is connected and $\eX$ is integral with $X$ open and dense in $\eX$. In Theorem \ref{cd-t22} we have shown that every vector bundle with numerically flat reduction lies in $\Bh_{\emm} (\eX_{\eo})$ for the maximal ideal $\emm$ in $\oZ_p$, see Definition  \ref{cd-t21}.  In this section, we prove the following important property for these bundles.

\begin{theorem}\label{cd-t23}
For every $\omega \in \oZ_p$ with $0 < |\omega| < 1$ we have
$\Bh_\emm (\eX_{\eo}) = \Bh_{\omega} (\eX_{\eo})$.
\end{theorem}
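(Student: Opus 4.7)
The inclusion $\Bh_{\omega}(\eX_\eo) \subset \Bh_{\emm}(\eX_\eo)$ is immediate: since $\omega \eo \subset \emm$, the natural surjection $\eo/\omega\eo \twoheadrightarrow \eo/\emm = k$ shows that any cover $\pi_i:\Yh_i \to \eX$ trivializing $\pi_i^\ast \Eh$ modulo $\omega$ also trivializes it modulo $\emm$ after a further reduction.

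For the reverse inclusion, I would first reduce to proving $\Bh_{\emm} \subset \Bh_{p^n \oZ_p}$ for every integer $n \ge 1$. This reduction is straightforward: any $\omega$ with $0 < |\omega| < 1$ satisfies $|p^n| \le |\omega|$ for $n$ large enough, so $p^n \in \omega \eo$, and the surjection $\eo/p^n\eo \twoheadrightarrow \eo/\omega\eo$ gives $\Bh_{p^n\oZ_p} \subset \Bh_{\omega\oZ_p}$.

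I would prove $\Bh_{\emm} \subset \Bh_{p^n\oZ_p}$ by induction on $n$, with Theorem \ref{cd-t22} providing (after a preliminary step that lifts triviality on the special fibre $\Yh_k$ to triviality on $\Yh \otimes_{\oZ_p}\oZ_p/p$) the base case. For the inductive step, assume we have data $\{U_i,\pi_i:\Yh_i \to \eX\}$ as in Definition \ref{cd-t21} with $\pi_i^\ast \Eh$ trivial on $\Yh_{i,n} = \Yh_i \otimes_{\oZ_p}\oZ_p/p^n$. Working locally, for each $x \in U_i(\C_p)$ I would apply Corollary \ref{cd-t44} to the proper pair $(\Yh_i, \pi_i^{-1}(U_i))$ and to the finite set $F$ of $\C_p$-points of $\pi_i^{-1}(U_i)$ lying above $x$. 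This produces a very good model $\rho:\Yh'_i \to \Yh_i$ whose pullback on $H^j(\Yh_i,\Oh)$ is divisible by $p$ for all $j \ge 1$, and which is finite \'etale surjective over an open neighborhood of $F$. Iterating this construction a bounded number of times, pushing down to an open neighborhood $V \subset U_i$ of $x$ contained in the \'etale locus, and finally taking a finite subcover of $X$ by such $V$'s, would yield the data required for $\Bh_{p^{n+1}\oZ_p}$.

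The cohomological heart of the argument is the following. By flatness of $\Yh_i$ over $\oZ_p$, the kernel of $\Oh_{\Yh_{i,n+1}} \twoheadrightarrow \Oh_{\Yh_{i,n}}$ is identified via multiplication by $p^n$ with $\Oh_{\Yh_{i,1}}$, so the obstruction to lifting a trivialization of $\pi_i^\ast\Eh|_{\Yh_{i,n}}$ to $\Yh_{i,n+1}$ is a class in $H^1(\Yh_{i,1},\Oh)^{r^2}$. The long exact sequence associated to $0 \to \Oh_{\Yh_i} \xrightarrow{p} \Oh_{\Yh_i} \to \Oh_{\Yh_{i,1}} \to 0$ exhibits $H^1(\Yh_{i,1},\Oh)$ as an extension of $H^2(\Yh_i,\Oh)[p]$ by $H^1(\Yh_i,\Oh)/p$, and iterated applications of Corollary \ref{cd-t44} make the pullbacks on $H^1(\Yh_i,\Oh)$ and $H^2(\Yh_i,\Oh)$ divisible by arbitrarily high powers of $p$. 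A diagram chase through the resulting ladder of exact sequences then forces the obstruction to vanish on a sufficiently deep cover. The main obstacle is precisely this bookkeeping: showing that $p$-divisibility of the pullbacks on the free groups $H^j(\Yh_i,\Oh)$ really does kill the obstruction classes (rather than merely making them highly divisible in the target), while simultaneously preserving the requirement that the cover remain finite \'etale over a neighborhood of the specified points — which Corollary \ref{cd-t44} only guarantees around finitely many closed points at each step.
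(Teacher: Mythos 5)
Your strategy---induction, reduction to a statement modulo $p^n$, and killing cohomological obstructions with Corollary \ref{cd-t44}---is essentially the paper's, but the two places you flag as "a preliminary step" and as "the main obstacle" are precisely where the argument must be completed, and neither gap is as easy to close as your proposal suggests.

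First, the "preliminary step that lifts triviality on the special fibre $\Yh_k$ to triviality on $\Yh\otimes_{\oZ_p}\oZ_p/p$" is not preliminary at all: $\oZ_p/p\oZ_p$ is a large non-reduced ring and passing from $\oZ_p/\emm=k$ to $\oZ_p/p\oZ_p$ is of the same nature as the entire lifting problem. The paper sidesteps this by running the induction in powers of a uniformizer $t$ of a finite extension $\eo_K\subset\oZ_p$ rather than in powers of $p$. By finite presentation the data $(\eX,\Eh\otimes\eo/p,\pi)$ descends to $\eo_K$; since the special fibre of the descended model is $\eX_{\eo_K}\otimes\eo_K/\ep$ with $\ep=t\eo_K$, triviality on $\Yh_k$ gives triviality of $\pi_1^*\Eh_1$ where $\eo_1=\oZ_p/\ep\oZ_p$. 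This puts $\Eh$ in $\Bh_t(\eX_\eo)$ with no work, and then $\Bh_\emm\subset\Bh_{t^\nu}\subset\Bh_\omega$ for $\nu$ large. Insisting on $p$ instead of $t$ forces you to establish a base case that is strictly harder than what the descent gives for free.

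Second, your concern that $p$-divisibility might only make obstruction classes "highly divisible in the target" has a clean resolution: exactly two applications of Corollary \ref{cd-t44} suffice per step, and no ladder-chasing or passage to the limit is required. The obstruction lies in $H^1(\Yh_1,\Oh)^{r^2}$, which sits in the exact sequence
\[
0 \longrightarrow H^1(\Yh,\Oh)/t \longrightarrow H^1(\Yh_1,\Oh) \longrightarrow H^2(\Yh,\Oh)_t \longrightarrow 0
\]
obtained from $0\to\Oh_\Yh\xrightarrow{t}\Oh_\Yh\to\Oh_{\Yh_1}\to 0$. If $f^*=p\cdot h$ with $h$ an $\oZ_p$-linear map and $t\mid p$, then on the $t$-torsion $H^2(\Yh,\Oh)_t$ one computes $f^*(b)=(p/t)\,h(tb)=0$, while on $H^1(\Yh,\Oh)/t$ the image $f^*(c)=ph(c)$ lies in $tH^1(\Yh',\Oh)$ and hence vanishes. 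The first application of Corollary \ref{cd-t44} therefore annihilates the image of the obstruction in $H^2(\Yh,\Oh)_t$, so the pulled-back obstruction lifts to $H^1(\Yh',\Oh)/t$; the second application kills that lift. Note also that the modules $H^j(\Yh_i,\Oh)$ need not be free, as you assert; only the exactness of the sequence above is used. Your handling of the shrinking \'etale locus (working near finite sets of closed points, passing to an open subcover by quasi-compactness of $X$) does match the paper and is fine.
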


\begin{proof}
Consider $\omega \in \oZ_p$ with $0 < |\omega| < 1$. Then $k = \oZ_p / \emm $ is a quotient of $\oZ_p / \omega \oZ_p$ and it follows from Definition \ref{cd-t21} that $\Bh_{\omega} (\eX_{\eo}) \subset \Bh_\emm (\eX_{\eo})$. 

Thus let $\Eh$ be a vector bundle in $\Bh_\emm(\eX_{\eo})$. For every point $x \in X (\oQ_p)$ we then have a very good model $\Yh$ over $\oZ_p$ together with a proper morphism $\pi : \Yh \to \eX$ such that $\emptyset \neq \pi^{-1} (U) \to U$ is finite \'etale for an open neighborhood $U$ of $x$ and such that $\pi^*_k \Eh_k$ is a trivial bundle on $\eX_k$. The family $(\eX , \Eh \otimes  \eo / p , \pi : \Yh \to \eX)$ where $\Eh \otimes \eo / p = \Eh \otimes \oZ_p / p$ is a vector bundle over $\eX_{\eo} \otimes \eo / p = \eX \otimes \oZ_p / p$ descends to a family $(\eX_{\eo_K} , \Fh , \pi_{\eo_K} : \Yh_{\eo_K} \to \eX_{\eo_K})$ over a finite extension $K$ of $\Q_p$. Here $\eX_{\eo_K} \otimes_{\eo_K} \oZ_p = \eX , \pi_{\eo_K} \otimes_{\eo_K} \oZ_p = \pi$ and $\Fh$ is a vector bundle on $\eX_{\eo_K} \otimes \eo_K / p$ with $\Fh \otimes_{\eo_K / p } \oZ_p / p = \Eh \otimes \eo / p$ whose restriction to the special fibre $\eX_{\eo_K} \otimes \eo_K / \ep$ is trivialised by pullback along $\pi_{\eo_K} \otimes \eo_K / \ep$. Here $\ep$ denotes the valuation ideal of $\eo_K$. Set
\[
\eo_{\nu} = \eo / \ep^{\nu} \eo = \oZ_p / \ep^{\nu} \oZ_p \; .
\]
Note that this notation differs from the one in section \ref{sec:cd3} if $K / \Q_p$ is ramified. Let $\eX_\nu, \Yh_\nu, \pi_{\nu} , \Eh_{\nu}$ etc. be the base changes of $\eX$, $\Yh$, $\pi$ and $\Eh$ etc. with $\eo_{\nu}$ over $\oZ_p$ and $\eo$, respectively. Since $\pi_1$ is also the base change of $\pi_{\eo_K} \otimes \eo_K / \ep$ with $\eo_1$ it follows that $\pi^*_1 (\Eh_1)$ is a trivial bundle on $\Yh_1$. Let $t$ be a prime element of $\eo_K$ so that $\ep = t \eo_K$. We have shown that $\Eh$ is in $\Bh_t (\eX_{\eo})$. The next claim implies that $\Eh$ is in $\Bh_{t^{\nu}} (\eX_{\eo})$ for all $\nu \ge 1$ and hence in $\Bh_{\omega} (\eX_{\eo})$ since $\Bh_{t^{\nu}} (\eX_{\eo}) \subset \Bh_{\omega} (\eX_{\eo}) $ if $|t^{\nu}| \le |\omega|$, which is the case for $\nu$ big enough. It thus remains to show the following assertion:

\begin{claim}
Given $\nu \ge 2$ consider a point $x \in X (\oQ_p)$ and a very good model $\Yh$ over $\oZ_p$ with a proper morphism $\pi : \Yh \to \eX$ such that $\pi^{-1}(U) \to U$ is finite \'etale surjective for an open neighborhood $U \subset X$ of $x$ and such that $\pi^*_{\nu -1} (\Eh_{\nu-1})$ is a trivial bundle on $\Yh_{\nu-1}$. Then there is a very good model $\Zh$ over $\oZ_p$ with a proper morphism $\mu : \Zh \to \eX$ such that $\mu^{-1} (\tU) \to \tU$ is finite \'etale surjective for some open neighborhood $\tU \subset U$ of $x$  and such that $\mu^*_{\nu} (\Eh_{\nu})$ is a trivial bundle on $\Zh_{\nu}$.
\end{claim}

In order to prove the claim we first identify the obstruction to $\pi^*_{\nu} (\Eh_{\nu})$ being a trivial bundle. Consider the ideal $J = t^{\nu-1} \Oh_{\Yh_{\nu}}$ in $\Oh_{\Yh_{\nu}}$ corresponding to the closed subscheme $\Yh_{\nu-1} \subset \Yh_{\nu}$. Consider the short exact sequence of Zariski sheaves of groups where $r = \rk \Eh$ and $f (A) = 1+A$
\[
0 \longrightarrow M_r (J) \xrightarrow{f} \GL_r (\Oh_{\Yh_{\nu}}) \longrightarrow \GL_r (\Oh_{\Yh_{\nu-1}}) \longrightarrow 1 \; .
\]
Note that $f$ is a homomorphism of groups because $J^2 = 0$. Right exactness follows from formal smoothness of $\GL_r$ over $\Z$. Thus we have an exact sequence of pointed sets
\begin{equation}
\label{eq:cd12}
H^1 (\Yh_{\nu} , M_r (J)) \xrightarrow{f} H^1 (\Yh_{\nu} , \GL_r (\Oh)) \longrightarrow H^1 (\Yh_{\nu-1} , \GL_r (\Oh)) \; .
\end{equation}
The class of $\pi^*_{\nu} (\Eh_{\nu})$ in the middle is mapped to the class of $\pi^*_{\nu-1} (\Eh_{\nu-1})$  on the right i.e. to the trivial class. Hence we have
\[
[\pi^*_{\nu} (\Eh_{\nu})] = f (a)
\]
for some class $a = (a_{kl})$ with $a_{kl} \in H^1 (\Yh_{\nu} , J)$. This also follows using \cite{Gir} VII, Th\'eor\`eme 1.3.1. Consider the closed immersion $i : \Yh_1 \hookrightarrow \Yh_{\nu}$. Multiplication by $t^{\nu-1}$ provides an isomorphism of sheaves of $\Oh_{\Yh_{\nu}} / t$-modules since $\Yh$ is integral
\[
t^{\nu-1} : i_* \Oh_{\Yh_1} \iso J = t^{\nu-1} \Oh_{\Yh_{\nu}} \; .
\]
This gives an isomorphism of $\oZ_p / t$-modules
\[
t^{\nu-1} : H^1 (\Yh_1 , \Oh) \iso H^1 (\Yh_{\nu} , J) \; .
\]
Let $b_{kl} \in H^1 (\Yh_1 , \Oh)$ denote the classes corresponding to the obstructions $a_{kl}$. Consider the finite subset $\pi^{-1} (x) \subset Y (\oQ_p)$. Applying Corollary \ref{cd-t44} to $F = \pi^{-1} (x)$ and $\Yh$ we obtain a very good model $\Yh'$ over $\oZ_p$ together with a proper, finitely presented  $\oZ_p$-morphism $f : \Yh' \to \Yh$ such that for each $i \ge 1$ the pullback
\[
f^* : H^i (\Yh , \Oh) \longrightarrow H^i (\Yh' , \Oh)
\]
is divisible by $p$ as a $\oZ_p$-linear map. Moreover $f$ is finite \'etale surjective around $\pi^{-1} (x)$ i.e. there is an open neighborhood $\pi^{-1} (x) \subset V \subset \pi^{-1} (U)$ such that $f^{-1} (V) \to V$ is finite \'etale surjective. Now $U_1 = X \ohne \pi (Y \ohne V) \subset U$ is an open neighborhood of $x$ in $X$ such that $\pi^{-1} (U_1) \subset V$. Thus $f^{-1} (\pi^{-1} (U_1)) \to \pi^{-1} (U_1)$ and therefore $(\pi \verk f)^{-1} (U_1) \to U_1$ is finite \'etale surjective. Consider the short exact sequence (note that $\Yh$ is integral)
\[
0 \longrightarrow \Oh_{\Yh} \xrightarrow{t} \Oh_{\Yh} \longrightarrow \Oh_{\Yh_1} \longrightarrow 0
\]
and the corresponding one on $\Yh'$. We get a commutative diagram
\[
\xymatrix{
0 \ar[r] & H^1 (\Yh , \Oh) / t \ar[r] \ar[d]^{f^*} & H^1 (\Yh_1 , \Oh) \ar[r]^{\delta} \ar[d]^{f^*} & H^2 (\Yh , \Oh)_t \ar[r] \ar[d]^{f^*} & 0 \\
0 \ar[r] & H^1 (\Yh' , \Oh) / t \ar[r] & H^1 (\Yh'_1 , \Oh) \ar[r]^{\delta} & H^2 (\Yh' , \Oh)_t \ar[r] & 0 \, .
}
\]
Here we use the notation $H / t = H / tH$ and $H_t = \Ker (t : H \to H)$. Since the linear map $f^*$ is divisible by $p$ on $H^2$ and $t$ divides $p$, the induced map on the right is zero:
\[
0 = f^* : H^2 (\Yh , \Oh)_t \longrightarrow H^2 (\Yh' , \Oh)_t \; .
\]
%The map on the left is also zero but this does not help at the moment. All we get is that 
Hence for all $k,l$ we find
\begin{equation}
 \label{eq:cd13}
 f^* (b_{kl}) \in \Imm (H^1 (\Yh' , \Oh) / t \longrightarrow H^1 (\Yh'_1 , \Oh)) \; .
\end{equation}
Now we use the previous argument a second time.  By Corollary \ref{cd-t44} applied to $\Yh'$ and the finite set $(\pi \verk f)^{-1} (x)$ we obtain a very good model $\Zh$ over $\oZ_p$ together with a proper morphism $g : \Zh \to \Yh'$ which is finite \'etale surjective around $(\pi \verk f)^{-1} (x)$ such that the map
\[
g^* : H^1 (\Yh' , \Oh) \longrightarrow H^1 (\Zh , \Oh)
\]
is divisible by $p$ as a $\oZ_p$-linear map. This time we look at the map on the left in the resulting diagram, which implies vanishing of the left vertical map in the  commutative diagram
\[
\xymatrix{
H^1 (\Yh' , \Oh) / t \ar[r] \ar[d]^{g^* = 0} & H^1 (\Yh'_1 , \Oh) \ar[d]^{g^*} \\
H^1 (\Zh , \Oh) / t \ar[r] & H^1 (\Zh_1 , \Oh).
}
\]
Hence it follows from \eqref{eq:cd13} that
\[
g^* (f^* (b_{kl}) = 0 \quad \text{in} \; H^1 (\Zh_1 , \Oh) \; .
\]
Hence we have $(f \verk g)^* (a_{kl}) = 0$ in $H^1 (\Zh_{\nu} , J)$ where $J = t^{\nu-1} \Oh_{\Zh_{\nu}}$. The exact sequence \eqref{eq:cd12} for $\Zh$ instead of $\Yh$ now shows that the class $[(\pi \verk f \verk g)^*_{\nu} (\Eh_{\nu})]$ is trivial in $H^1 (\Zh_{\nu} , \GL_r (\Oh))$. Thus $\mu = \pi \verk f \verk g : \Zh \to \Yh' \to \Yh \to \eX$ is a proper morphism from a very good model $\Zh$ over $\oZ_p$ to $\eX$ such that $\mu^*_{\nu} (\Eh_{\nu})$ is a trivial bundle. Since $g : \Zh \to \Yh'$ is finite \'etale surjective around $(\pi \verk f)^{-1} (x)$ we find as above an open neighborhood $\tU \subset U_1$ of $x$ in $X$ such that $\mu^{-1} (\tU) \to \tU$ is finite \'etale surjective. Thus the claim is proved.
\end{proof}

\section{Parallel transport over the ring of integers} \label{sec:cd31}

We can now construct parallel transport for all bundles with numerically flat reduction.

Let $X$ be a smooth, complete variety over $\oQ_p$ and $\eX$ a model of $X$ over $\oZ_p$. Let $\Eh$ be a rank $r$ vector bundle on $\eX_{\eo}$ with numerically flat reduction, i.e. $\Eh \in \Bh^s (\eX_{\eo})$. Fix an integer $n \ge 1$. By Theorems \ref{cd-t22} and \ref{cd-t23} with $\omega = p^n$ there exists an open covering $\eU = (U)$ of $X$ together with good trivializing covers $\pi : \Yh \to \eX$ for $\Eh_n$ over $(\eX , U)$. We may assume that $\eU$ is stable under finite intersections. Then Construction \ref{cd-t26} gives functors
\[
\rho_{\Eh,n} (U) : \Pi_1 (U) \longrightarrow \Free_r (\eo_n)
\]
for all $U \in \eU$ which depend only on $(\eX , U)$ and $\Eh$. Moreover, for any inclusion $j : V \hookrightarrow U$ of open sets in $\eU$, formula \eqref{eq:cd5} of Lemma \ref{cd-t27} applied to $\alpha = \id_{\eX}$ shows that
\[
\rho_{\Eh ,n} (U) \verk j_* = \rho_{\Eh,n} (V) : \Pi_1 (V) \longrightarrow \Free_r (\eo_n) \; .
\]

\begin{defn}\label{cd-t28} In the situation above, we define $\rho_{\Eh,n}$ as  the unique functor 
\begin{equation} \label{eq:cd7}
\rho_{\Eh,n} : \Pi_1 (X) \longrightarrow \Free_r (\eo_n)
\end{equation}
such that
\begin{equation} \label{eq:cd7a}
\rho_{\Eh,n} (U) = \rho_{\Eh,n} \verk j_{U^*} \quad \text{for all} \; U \in \eU \; .
\end{equation}
Here $j_U : U \hookrightarrow X$ is the open immersion. 
\end{defn}

Note that since $\Free_r (\eo_n)$ is a quasi-finite groupoid, it follows from Theorem \ref{cd-t24} that this is well-defined.

Since any two open coverings of $X$ can be refined by (an intersection stable) open covering it follows formally that $\rho_{\Eh ,n}$ depends only on $\eX$ and $\Eh$ and not on the choice of $\eU$. Note that by definition: 
\begin{equation}\label{eq:cd8}
\rho_{\Eh,n} (x) = \Eh_{x_n} \quad \text{for all} \; x \in X (\C_p) \; .
\end{equation}
In particular this parallel transport gives a representation
\begin{equation} \label{eq:cd9}
\rho_{\Eh,n,x} : \pi_1 (X,x) \longrightarrow \GL (\Eh_{x_n}) := \Aut_{\eo_n} (\Eh_{x_n})
\end{equation}
for any $x \in X (\C_p)$. 

We now discuss two functorial properties of $\rho_{\Eh,n}$. 

\begin{prop} \label{cd-t29}
Let $X$ and $\tX$ be smooth, complete varieties over $\oQ_p$ with models $\eX$ and $\teX$ over $\oZ_p$. For any morphism $\alpha : \teX \to \eX$ and any vector bundle $\Eh$ in $\Bh^s (\eX_{\eo})$ the bundle $\alpha^* \Eh$ is in $\Bh^s (\teX_{\eo})$ and we have
\[
\rho_{\alpha^* \Eh , n} = \rho_{\Eh,n} \verk \alpha_* : \Pi_1 (\tX) \longrightarrow \Free_r (\eo_n) \quad \text{if} \; r = \rank \, \Eh \; .
\]
\end{prop}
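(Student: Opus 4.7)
The plan is to treat the two assertions separately, the first being straightforward and the second requiring the Seifert--van Kampen machinery.

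For the first assertion, that $\alpha^*\Eh$ lies in $\Bh^s(\teX_\eo)$: we must show $(\alpha^*\Eh)_k = \alpha_k^*\Eh_k$ is numerically flat on $\teX_k$. Using the equivalence with Nori-semistability recalled in Section \ref{sec:cd1}, for any morphism $f: C \to \teX_k$ from a smooth projective connected curve, the composition $\alpha_k \verk f : C \to \eX_k$ gives $f^*\alpha_k^*\Eh_k = (\alpha_k \verk f)^*\Eh_k$, which is semistable of degree zero by Nori-semistability of $\Eh_k$. Hence $\alpha_k^*\Eh_k$ is Nori-semistable, and therefore numerically flat.

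For the second assertion, $\rho_{\alpha^*\Eh, n} = \rho_{\Eh, n} \verk \alpha_*$, the strategy is to verify the identity after restriction to each member of a suitable open cover of $\tX$, then invoke the uniqueness in Theorem \ref{cd-t24} ii). Choose an intersection-stable open cover $\eU$ of $X$ by smooth connected quasi-projective subvarieties such that each $(\eX, U)$ with $U \in \eU$ admits a good trivializing cover for $\Eh_n$; existence follows from Theorems \ref{cd-t22} and \ref{cd-t23} applied with $\omega = p^n$, after possibly refining. By Definition \ref{cd-t28} this cover computes $\rho_{\Eh, n}$. Since $\tX$ is smooth and $\{\alpha_{\oQ_p}^{-1}(U)\}_{U \in \eU}$ is an open cover of $\tX$, refine it to an intersection-stable open cover $\tilde{\eU}$ of $\tX$ by smooth connected quasi-projective subvarieties $\tV$, each contained in $\alpha_{\oQ_p}^{-1}(U(\tV))$ for some chosen $U(\tV) \in \eU$.

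For every $\tV \in \tilde{\eU}$, the restriction of $\alpha$ defines a morphism of proper pairs $\alpha : (\teX, \tV) \to (\eX, U(\tV))$. Applying Lemma \ref{cd-t27} to this morphism together with the good trivializing cover for $\Eh_n$ over $(\eX, U(\tV))$ yields both a good trivializing cover for $(\alpha^*\Eh)_n$ over $(\teX, \tV)$ (so that $\rho_{\alpha^*\Eh, n}$ is well-defined via the cover $\tilde{\eU}$) and the identity
\[
\rho_{\alpha^*\Eh, n}(\tV) = \rho_{\Eh, n}(U(\tV)) \verk \alpha_* = \rho_{\Eh, n} \verk (j_{U(\tV)})_* \verk \alpha_* = \rho_{\Eh, n} \verk \alpha_* \verk (j_{\tV})_*.
\]
Combined with the defining identity $\rho_{\alpha^*\Eh, n} \verk (j_{\tV})_* = \rho_{\alpha^*\Eh, n}(\tV)$ from Definition \ref{cd-t28}, this shows that the two functors $\rho_{\alpha^*\Eh, n}$ and $\rho_{\Eh, n} \verk \alpha_*$ from $\Pi_1(\tX)$ to $\Free_r(\eo_n)$ agree after restriction along $(j_{\tV})_*$ for every member $\tV$ of $\tilde{\eU}$. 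Since $\Free_r(\eo_n)$ is quasi-finite, the uniqueness in Theorem \ref{cd-t24} ii) forces their equality as functors on $\Pi_1(\tX)$. The only real subtlety is the bookkeeping required to construct the cover $\tilde{\eU}$ with the right structural properties so that Lemma \ref{cd-t27} applies pointwise; no deeper obstacle arises.
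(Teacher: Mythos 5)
Your proof is correct and takes essentially the same route as the paper's: establish numerical flatness of $\alpha_k^*\Eh_k$ (which the paper simply asserts; your argument via Nori-semistability of Definition \ref{cd-t1} supplies the justification), fix an intersection-stable cover $\eU$ of $X$ on which $\rho_{\Eh,n}$ is computed via Construction \ref{cd-t26}, apply Lemma \ref{cd-t27} over each member, and conclude via the uniqueness assertion in Theorem \ref{cd-t24} ii) using that $\Free_r(\eo_n)$ is quasi-finite. The one place you deviate from the paper is in refining $\alpha_{\oQ_p}^{-1}\eU$ to an intersection-stable cover $\tilde{\eU}$ of $\tX$ by smooth connected \emph{quasi-projective} subvarieties $\tV \subset \alpha_{\oQ_p}^{-1}(U(\tV))$ before invoking Lemma \ref{cd-t27}. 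The paper instead applies Lemma \ref{cd-t27} directly to the pairs $(\teX, \alpha^{-1}(U))$, implicitly treating each as a proper pair in the sense of Definition \ref{cd-t6}. Since $\tX$ is only assumed complete and not projective, the open subset $\alpha^{-1}(U)$ need not be quasi-projective, so the hypothesis of Definition \ref{cd-t6} (and hence the hypothesis of Lemma \ref{cd-t27}) is not automatically met; your refinement step is the more careful formulation and closes a small gap that the paper glosses over. Apart from this bookkeeping the two arguments are the same.
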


\begin{proof}
With $\Eh_k$ the special fibre $(\alpha^*_{\eo} \Eh)_k = \alpha^*_k \Eh_k$ of $\alpha^*_{\eo} \Eh$ is numerically flat as well. Hence $\alpha^* \Eh$ is in $\Bh^s (\teX_{\eo})$. Let $\eU = \{ U \}$ be an intersection stable open covering of $X$ for which good trivializing covers of $\Eh_n$ over $(\eX , U)$ exist for all $U \in \eU$. Consider the intersection stable open covering $\alpha^{-1} \eU = \{ \alpha^{-1} (U) \}$ of $\tX$. By Lemma \ref{cd-t27} there exist good trivializing covers of $(\alpha^* \Eh)_n$ over the proper pairs $(\teX , \alpha^{-1} (U))$ for all $U \in \eU$. For the functors $\rho_{\alpha^* \Eh , n} (\alpha^{-1} (U))$ obtained by Construction \ref{cd-t26} we have for all $U \in \eU$
\begin{align*}
\rho_{\alpha^* \Eh , n} (\alpha^{-1} (U)) & = \rho_{\Eh, n} (U) \verk (\alpha \, |_{\alpha^{-1} (U)})_* \quad \text{by Lemma \ref{cd-t27}, \eqref{eq:cd5}}\\
& = \rho_{\Eh , n} \verk j_{U^*} \verk (\alpha \, |_{\alpha^{-1} (U)})_* \quad \text{by \eqref{eq:cd7a}} \\
& = \rho_{\Eh , n} \verk \alpha_* \verk j_{\alpha^{-1} (U)^*} \; .
\end{align*}
Here $j_{\alpha^{-1} (U)} : \alpha^{-1} (U) \to \tX$ is the inclusion. By the uniqueness assertion in the Seifert-van Kampen Theorem \ref{cd-t24} it follows that
\[
\rho_{\alpha^* \Eh ,n} = \rho_{\Eh , n} \verk \alpha_* \; .
\]
Here we have used that $\Free_r (\eo_n)$ is a quasi-finite groupoid. 
\end{proof}

The next proposition investigates functoriality with respect to conjugation. Let $G_{\Q_p} = \Gal (\oQ_p / \Q_p)$ be the absolute Galois group of $\Q_p$. It is also the group of continuous automorphisms of $\C_p$ over $\Q_p$. For a scheme $\eX$ over $\oZ_p$ and a vector bundle $\Eh$ on $\eX_\eo$ we set $^{\sigma} \eX = \eX \otimes_{\oZ_p , \sigma} \oZ_p$ and $^{\sigma} \Eh = \Eh \otimes_{\eo , \sigma} \eo$, which is a vector bundle on $^{\sigma} (\eX_{\eo}) = ({}^{\sigma}\! \eX)_{\eo}$.

\begin{prop} \label{cd-t30}
For every $\sigma$ in $G_{\Q_p}$ and each vector bundle $\Eh$ in $\Bh^s (\eX_{\eo})$ the conjugate bundle ${}^{\sigma}\!\Eh$ is in $\Bh^s ({}^{\sigma}\! \eX_{\eo})$ and the following diagram commutes:
\[
\xymatrix{
\Pi_1 (\eX) \ar[d]^{\wr}_{\sigma_*} \ar[r]^-{\rho_{\Eh ,n} } & \Free_r (\eo_n) \ar[d]^{\wr\, \sigma_*} \\
\Pi_1 ({}^{\sigma}\! \eX) \ar[r]^-{\rho_{{}^{\sigma}\!\Eh ,n }} & \Free_r (\eo_n)
}
\]
Here the left vertical isomorphism of groupoids is induced by the isomorphism $\sigma : \eX \iso \,{}^{\sigma}\!\eX$ of schemes. The right vertical isomorphism sends a free $\eo_n$ module $\Gamma$ of rank $r$ to $\Gamma \otimes_{\eo_n, \sigma} \eo_n$ and a homomorphism $f : \Gamma_1 \to \Gamma_2$ of such modules to $f \otimes_{\eo_n, \sigma} \eo_n$. 
\end{prop}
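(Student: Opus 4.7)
The plan is to first verify the statement about $\Bh^s$, then construct a Galois-twisted trivializing cover, then match the two functors piecewise on an open cover and glue via Seifert--van Kampen.

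First I would check that ${}^{\sigma}\!\Eh$ lies in $\Bh^s({}^{\sigma}\!\eX_{\eo})$. The $\oZ_p$-isomorphism $\sigma : \oZ_p \iso \oZ_p$ restricts to an automorphism $\bar\sigma : k \iso k$ of the common residue field, and base change along $\bar\sigma$ identifies $({}^{\sigma}\!\eX)_k$ with $\eX_k \otimes_{k,\bar\sigma} k$ and $({}^{\sigma}\!\Eh)_k$ with $\Eh_k \otimes_{k,\bar\sigma} k$. Since numerical flatness is defined via nef-ness on $\Pa(E)$, which is a purely scheme-theoretic condition preserved by any isomorphism of schemes (over arbitrary bases), the pullback of a numerically flat bundle along the scheme isomorphism $\eX_k \iso \eX_k \otimes_{k,\bar\sigma} k$ remains numerically flat.

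Next I would transport good trivializing covers. By Theorems \ref{cd-t22} and \ref{cd-t23} (with $\omega = p^n$) there is an intersection-stable open covering $\eU = \{U\}$ of $X$ and good trivializing covers $\pi_U : \Yh_U \to \eX$ for $\Eh_n$ over each $(\eX, U)$, used in Definition \ref{cd-t28}. Applying $(-) \otimes_{\oZ_p,\sigma} \oZ_p$ yields ${}^{\sigma}\!\pi_U : {}^{\sigma}\!\Yh_U \to {}^{\sigma}\!\eX$. The three defining properties of a good trivializing cover (being a good model, finite \'etale surjectivity over ${}^{\sigma}\!U$, and triviality of $({}^{\sigma}\!\pi_U)_n^* ({}^{\sigma}\!\Eh)_n$) all follow from the corresponding properties for $\pi_U$ by base change along the ring isomorphism $\sigma$, so $\{{}^{\sigma}\!U\}$ together with $\{{}^{\sigma}\!\pi_U\}$ is a valid datum for Construction \ref{cd-t26} applied to $({}^{\sigma}\!\eX, {}^{\sigma}\!\Eh)$.

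Then I would compare the two functors piecewise. Fix $U \in \eU$, an \'etale path $\gamma$ from $x$ to $x'$ in $U(\C_p)$, and a preimage $y \in \pi_U^{-1}(x)$. By the functoriality of \'etale fundamental groupoids under $\sigma \in G_{\Q_p}$ recalled in the introduction of Section \ref{sec:cd3} (cf.\ \cite{dewe1}, \S\,3), the path $\sigma_* \gamma$ runs from $\sigma(x)$ to $\sigma(x')$ in ${}^{\sigma}\!U$, and $\sigma(y) \in ({}^{\sigma}\!\pi_U)^{-1}(\sigma(x))$ satisfies $(\sigma_* \gamma) \sigma(y) = \sigma(\gamma y)$. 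Under the canonical identification
\[
\Gamma({}^{\sigma}\!\Yh_{U,n}, ({}^{\sigma}\!\pi_U)_n^* ({}^{\sigma}\!\Eh)_n) \;=\; \Gamma(\Yh_{U,n}, \pi_{U,n}^* \Eh_n) \otimes_{\eo_n,\sigma} \eo_n,
\]
the pullback $(\sigma(y))_n^*$ corresponds to $y_n^* \otimes \sigma$, and similarly for $\sigma(\gamma y)$. Hence by Construction \ref{cd-t26},
\[
\rho_{{}^{\sigma}\!\Eh, n}({}^{\sigma}\!U)(\sigma_* \gamma) \;=\; \rho_{\Eh, n}(U)(\gamma) \otimes_{\eo_n,\sigma} \eo_n \;=\; \sigma_* \rho_{\Eh, n}(U)(\gamma).
\]
On objects the same identification gives $\rho_{{}^{\sigma}\!\Eh, n}({}^{\sigma}\!U)(\sigma(x)) = ({}^{\sigma}\!\Eh)_{\sigma(x)_n} = \Eh_{x_n} \otimes_{\eo_n,\sigma}\eo_n = \sigma_* \rho_{\Eh, n}(U)(x)$.

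Finally I would glue. Both composites $\rho_{{}^{\sigma}\!\Eh, n} \verk \sigma_*$ and $\sigma_* \verk \rho_{\Eh, n}$ are continuous functors from $\Pi_1(X)$ to the quasi-finite groupoid $\Free_r(\eo_n)$ whose restrictions along each $j_{U*}$ ($U \in \eU$) coincide with $\rho_{{}^{\sigma}\!\Eh, n}({}^{\sigma}\!U) \verk \sigma_*$ by \eqref{eq:cd7a} and the previous paragraph. The uniqueness clause of the Seifert--van Kampen Theorem \ref{cd-t24}(ii), applied to the open covering $\{{}^{\sigma}\!U\}$ of ${}^{\sigma}\!X$ transported back along $\sigma_* : \Pi_1(X) \iso \Pi_1({}^{\sigma}\!X)$, then forces the two composites to be equal. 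The main obstacle is purely bookkeeping: keeping straight the three $\sigma$-actions (on schemes, on sections/global pullbacks, and on $\eo_n$-modules) and verifying that each step in Construction \ref{cd-t26} commutes with $(-) \otimes_{\oZ_p,\sigma}\oZ_p$; once this naturality is laid out, commutativity of the square is immediate.
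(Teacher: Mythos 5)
Your proposal is correct and fills in exactly what the paper leaves unsaid: the paper's own ``proof'' consists of the single sentence that it is straightforward, and your argument is the natural spelling-out of that claim, checking that each step of Construction \ref{cd-t26} commutes with $(-)\otimes_{\oZ_p,\sigma}\oZ_p$ and then invoking the uniqueness clause of Theorem \ref{cd-t24}\,(ii) to glue, precisely parallel to how $\rho_{\Eh,n}$ itself and the functoriality in Proposition \ref{cd-t29} are established. One small remark: the ``$\Pi_1(\eX)$'' in the statement should be read as $\Pi_1(X)$ (the fundamental groupoid of the generic fibre), which is what you correctly use.
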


The proof of the Proposition is straightforward.

Let $\Rep_{\Pi_1 (Z)} (\eo_n)$ be the $\eo_n$-linear exact category of functors from $\Pi_1 (Z)$ to $\Free (\eo_n)$, the category of free $\eo_n$-modules of finite rank. It is equipped with tensor products, duals, internal homs and exterior powers.

\begin{theorem} \label{cd-t30n}
Parallel transport induces an $\eo$-linear exact functor
\[
\rho_n : \Bh^s (\eX_{\eo}) \longrightarrow \Rep_{\Pi_1 (X)} (\eo_n)
\]
by mapping $\Eh$ to $\rho_{\Eh , n}$ and a morphism $f : \Eh \to \tEh$ over $\eX_{\eo}$ to the natural transformation $\rho_{f,n} : \rho_{\Eh, n} \to \rho_{\tEh, n}$ given by the maps $f_{x_n} = x^*_n f : \Eh_{x_n} \to \tEh_{x_n}$ for $x \in X (\C_p) = \Ob (\Pi_1 (X))$. The functor $\rho_n$ commutes with tensor products, duals, internal homs and exterior powers.
\end{theorem}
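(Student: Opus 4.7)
The plan is to assemble the functor $\rho_n$ from the per-bundle construction of Definition \ref{cd-t28}. The key enabling observation, already established in the preceding sections, is a common-cover principle: given finitely many bundles $\Eh_1, \ldots, \Eh_s$ in $\Bh^s(\eX_\eo)$ and a point $x \in X(\C_p)$, combining Theorems \ref{cd-t22} and \ref{cd-t23} with Lemma \ref{cd-t17} produces a single very good morphism $\pi : \Yh \to \eX$ that is finite \'etale surjective over an open neighborhood $U$ of $x$ in $X$ and such that each $\pi_n^\ast (\Eh_i)_n$ is trivial on $\Yh_n$.

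Using this, I would first verify that $\rho_{f,n}$ is a natural transformation for every morphism $f : \Eh \to \tEh$ in $\Bh^s(\eX_\eo)$. Given an \'etale path $\gamma$ from $x$ to $x'$ in some open $U$ admitting a common trivializing cover $\pi : \Yh \to \eX$ for $\Eh_n$ and $\tEh_n$ over $(\eX, U)$, pick $y \in \pi^{-1}(x)$. Then the pullback $\pi_n^\ast f : \pi_n^\ast \Eh_n \to \pi_n^\ast \tEh_n$ together with the natural isomorphisms $y_n^\ast$ and $(\gamma y)_n^\ast$ yields the naturality identity $f_{x'_n} \circ \rho_{\Eh,n}(\gamma) = \rho_{\tEh,n}(\gamma) \circ f_{x_n}$ by functoriality of pullback. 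Local naturality on each member of an intersection-stable covering $\eU$ then globalizes by the uniqueness clause in the Seifert--van Kampen theorem (Theorem \ref{cd-t24}), which applies since $\Free_r(\eo_n)$ is quasi-finite. The composition law $\rho_{g \circ f, n} = \rho_{g,n} \circ \rho_{f,n}$ and the $\eo$-linearity of $f \mapsto \rho_{f,n}$ follow from the corresponding properties of pullback.

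Exactness reduces to exactness of the fiber functor $\Eh \mapsto \Eh_{x_n}$ for each $x \in X(\C_p)$, which holds because short exact sequences of vector bundles are locally split and hence remain exact after pullback along any section; object-wise exactness suffices since the morphism sets in $\Free_r(\eo_n)$ are abelian. Finally, for compatibility with $\otimes$, $(-)^\ast$, $\uHom$, and $\Lambda^i$, one uses that $\Bh^s(\eX_\eo)$ is stable under these operations by Proposition \ref{cd-t20}, that each operation commutes with pullback of vector bundles, and that each sends trivial bundles to trivial bundles. Hence a common trivializing cover $\pi$ for $\Eh_n$ and $\tEh_n$ also trivializes $(\Eh \otimes \tEh)_n$, $(\Eh^\ast)_n$, $\uHom(\Eh, \tEh)_n$, and $\Lambda^i(\Eh)_n$ modulo $p^n$. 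The canonical fiberwise identifications such as $(\Eh \otimes \tEh)_{x_n} \cong \Eh_{x_n} \otimes \tEh_{x_n}$ commute with $y_n^\ast$ and $(\gamma y)_n^\ast$, producing the required isomorphisms of functors. The main substantive content --- namely the existence of common trivializing covers --- is already settled by the earlier results; the remaining verifications are routine diagram chases in the functoriality of pullback.
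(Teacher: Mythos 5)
Your proposal is correct and follows essentially the same route as the paper: reduce to a common trivializing cover (Theorems \ref{cd-t22}, \ref{cd-t23} with Lemma \ref{cd-t17}), verify the naturality square locally by the diagram chase with $y^*_n$ and $(\gamma y)^*_n$, and globalize with Seifert--van Kampen, the remaining assertions being routine. One small point of precision: for the globalization of naturality the paper uses the path-decomposition statement of Theorem \ref{cd-t24}\,i) (together with Proposition \ref{cd-t29}) to reduce an arbitrary $\gamma$ to paths lying inside a single $U$, rather than the uniqueness clause of part ii) that you cite; the uniqueness clause concerns functors into a quasi-finite \emph{groupoid} and does not directly glue a natural transformation $\rho_{f,n}$, especially since $f$ need not be an isomorphism --- one would first have to repackage the triple $(\rho_{\Eh,n},\rho_{\tEh,n},\rho_{f,n})$ as a functor into an arrow groupoid on isomorphisms, which works but is more circuitous than just citing part i).
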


\begin{proof}
First we have to show that $\rho_{f,n}$ is natural, i.e. that the following diagram commutes for all paths $\gamma$ from a point $x \in X (\C_p)$ to a point $x' \in X (\C_p)$:
\begin{equation}
\label{eq:cd10}
\xymatrix{
\Eh_{x_n} \ar[d]_{\rho_{\Eh , n} (\gamma)} \ar[r]^{f_{x_n}} & \tEh_{x_n} \ar[d]^{\rho_{\tEh ,n} (\gamma)} \\
\Eh_{x'_n} \ar[r]^{f_{x'_n}} & \tEh_{x'_n} \; .
}
\end{equation}
This can be seen as follows. By Theorems \ref{cd-t22} and \ref{cd-t23} we have $\Bh^s (\eX_{\eo}) = \Bh_{p^n} (\eX_{\eo})$. Because of Lemma \ref{cd-t17} there are an open quasiprojective (smooth) subscheme $U \subset X$ with $x \in U (\C_p)$ and a very good cover $\pi : \Yh \to \eX$ over the proper pair $(\eX , U)$ which is trivializing for both $\Eh_n$ and $\tEh_n$. Similarly for $x'$. Because of Theorem \ref{cd-t24} i) and Proposition \ref{cd-t29} we may assume that both $x$ and $x'$ lie in $U$ when we check that \eqref{eq:cd10} commutes. Noting that both $\pi^*_n \Eh_n$ and $\pi^*_n \tEh_n$ are trivial bundles, this case follows from the following commutative diagram where $y \in Y (\C_p)$ is a chosen point over $x$
\[
\xymatrix{
\Eh_{x_n} \ar[rr]^{f_{x_n}} & & \tEh_{x_n} \\
\Gamma (\Yh_n , \pi^*_n \Eh) \ar[rr]^{\Gamma (\Yh_n , \pi^*_n f_n)} \ar[u]^{y^*_n}_{\wr} \ar[d]^{\wr}_{(\gamma y)^*_n} & & \Gamma (\Yh_n, \pi^*_n \tEh_n) \ar[u]_{\wr \, y^*_n} \ar[d]^{\wr \, (\gamma y)^*_n} \\
\Eh_{x'_n} \ar[rr]^{f_{x'_n}} & & \tEh_{x'_n} \; .
}
\]
Exactness of $\rho_n$ is clear. Compatibility with tensor products etc. follows from our construction, using Lemma \ref{cd-t17} in a similar way. 
\end{proof}

For a morphism $\alpha : \teX \to \eX$ of models over $\oZ_p$ it follows from Proposition \ref{cd-t29} that pullback by $\alpha$ induces a functor $\alpha^* : \Bh^s (\eX_{\eo}) \to \Bh^s (\teX_{\eo})$. The next result details how this  is related with our parallel transport  $\rho_n$. The generic fibre of $\alpha$ induces a functor
\[
A (\alpha) : \Rep_{\Pi_1 (X)} (\eo_n) \longrightarrow \Rep_{\Pi_1 (\tX)} (\eo_n)
\]
as follows. For an object $M$ of $\Rep_{\Pi_1 (X)} (\eo_n)$ we define $A (\alpha) M$ to be the composed functor
\[
A (\alpha) M : \Pi_1 (\tX) \xrightarrow{\alpha_*} \Pi_1 (X) \xrightarrow{M} \Free (\eo_n) \; .
\]
For a morphism $f : M \to N$ in $\Rep_{\Pi_1 (X)} (\eo_n)$ given by $\eo_n$-linear maps $f_x : M_x \to N_x$ for $x \in X (\C_p)$ we define $A (\alpha) f$ as the family of maps for $\tx \in \tX (\C_p)$
\[
(A (\alpha) M)_{\tx} = M_{\alpha (\tx)} \xrightarrow{f_{\alpha (\tx)}} N_{\alpha (\tx)} = (A (\alpha) N)_{\tx} \; .
\]

\begin{prop} \label{cd-t31}
Consider models $\eX , \teX$ over $\oZ_p$ of smooth complete varieties $X , \tX$ over $\oQ_p$. For any morphism $\alpha : \teX \to \eX$ over $\oZ_p$ the following diagram of categories and functors commutes (up to canonical isomorphisms):
\[
\xymatrix{
\Bh^s (\eX_{\eo}) \ar[r]^-{\rho_n} \ar[d]_{\alpha^*} & \Rep_{\Pi_1 (X)} (\eo_n) \ar[d]^{A (\alpha)} \\
\Bh^s (\teX_{\eo}) \ar[r]^-{\rho_n} & \Rep_{\Pi_1 (\tX)} (\eo_n) \; .
}
\]
\end{prop}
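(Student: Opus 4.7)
On objects the commutativity is essentially Proposition \ref{cd-t29}. Indeed, for any $\Eh$ in $\Bh^s(\eX_\eo)$ the bundle $\alpha^\ast\Eh$ lies in $\Bh^s(\teX_\eo)$ (as noted at the beginning of the proof of Proposition \ref{cd-t29}), and Proposition \ref{cd-t29} gives
\[
\rho_{\alpha^\ast\Eh,n} \;=\; \rho_{\Eh,n}\verk \alpha_\ast \;=\; A(\alpha)(\rho_{\Eh,n})
\]
as functors $\Pi_1(\tX)\to\Free_r(\eo_n)$. So the two composite functors agree on objects.

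For morphisms we have to compare, for $f:\Eh\to\tEh$ in $\Bh^s(\eX_\eo)$, the natural transformation $\rho_{\alpha^\ast f, n}:\rho_{\alpha^\ast\Eh,n}\to\rho_{\alpha^\ast\tEh,n}$ produced by the top composite with the natural transformation $A(\alpha)(\rho_{f,n}):A(\alpha)(\rho_{\Eh,n})\to A(\alpha)(\rho_{\tEh,n})$ produced by the bottom composite. By Theorem \ref{cd-t30n} the components of $\rho_{\alpha^\ast f,n}$ are the pulled-back maps on fibers $\tx_n^\ast(\alpha^\ast f)$ for $\tx\in \tX(\C_p)$, while by the very definition of $A(\alpha)$ the components of $A(\alpha)(\rho_{f,n})$ at $\tx$ are the maps $(\rho_{f,n})_{\alpha(\tx)}=\alpha(\tx)_n^\ast f$ on the fibers over $\alpha(\tx)$. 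These two coincide because the section $\tx_\eo:\spec\eo\to\teX_\eo$ composed with $\alpha_\eo$ is the section $\alpha(\tx)_\eo:\spec\eo\to\eX_\eo$, so reducing modulo $p^n$ gives $\alpha_n\verk\tx_n=\alpha(\tx)_n$ and hence
\[
\tx_n^\ast(\alpha^\ast f)\;=\;(\alpha_n\verk\tx_n)^\ast f\;=\;\alpha(\tx)_n^\ast f,
\]
via the canonical identification $\tx_n^\ast(\alpha^\ast\Eh)=\alpha(\tx)_n^\ast\Eh$ (and similarly for $\tEh$), which is the canonical isomorphism that underlies the equality of functors on objects in the first paragraph.

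No serious obstacle is expected: the whole statement is a book-keeping verification once Proposition \ref{cd-t29} is in hand. The only mild subtlety is to observe that the identifications of fibers making the object-level commutativity into an actual equality of functors are precisely the canonical identifications $\tx_n^\ast\alpha^\ast = \alpha(\tx)_n^\ast$, so that the two natural transformations have literally the same components. One then invokes the uniqueness clause of Theorem \ref{cd-t24} (ii) applied to the quasi-finite target groupoid $\Free_r(\eo_n)$ only insofar as it is already used inside Proposition \ref{cd-t29}; no further gluing argument is required at the level of morphisms.
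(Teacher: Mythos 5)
Your proof is correct and follows essentially the same route as the paper: object-level commutativity is exactly Proposition~\ref{cd-t29}, and morphism-level commutativity is a direct unwinding of the definitions. The paper dispatches the morphism check with the phrase ``by our definitions''; you spell out the underlying point, namely the canonical identification $\tx_n^*\alpha^* = \alpha(\tx)_n^*$ coming from $\alpha_n \verk \tx_n = \alpha(\tx)_n$, which is exactly what that phrase refers to, so no gap and no genuinely different method.
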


\begin{proof}
Commutativity on objects is equivalent to Proposition \ref{cd-t29} since $(\rho_n \verk \alpha^*)(\Eh)= \rho_{\alpha^* \Eh , n}$ and $(A (\alpha) \verk \rho_n) (\Eh) = \rho_{\Eh , n} \verk \alpha_*$. Commutativity on morphisms follows because for $f : \Eh_1 \to \Eh_2$ the morphism $(\rho_n \verk \alpha^*) (f) = \rho_{\alpha^* f, n}$ equals $(A (\alpha) \verk \rho_n) (f) = A (\alpha) \rho_{f,n}$ by our definitions. 
\end{proof}

We now pass to the projective limit of the $\mod p^n$ parallel transports. A good trivializing cover for $\Eh_{n+1}$ over a proper pair $(\eX , U)$ is also a trivializing cover for $\Eh_n$ over $(\eX , U)$. It follows that for an \'etale path $\gamma$ from $x \in X (\C_p)$ to $x' \in X (\C_p)$ and a bundle $\Eh$ in $\Bh^s (\eX_{\eo})$ the reduction modulo $ p^n$ of
\[
\rho_{\Eh , n+1} (\gamma) : \Eh_{x_{n+1}} \longrightarrow \Eh_{x'_{n+1}}
\]
equals
\[
\rho_{\Eh , n} (\gamma) : \Eh_{x_n} \longrightarrow \Eh_{x'_n} \; .
\]
Hence we may form the projective limit
\[
\rho_{\Eh} (\gamma) = \varprojlim_n \rho_{\Eh_n} (\gamma) : \Eh_{x_{\eo}} \longrightarrow \Eh_{x'_{\eo}} \; .
\]
Here $x_{\eo} \in \eX_{\eo} (\eo) = X (\C_p)$ is the extension  of $x$ to a section $x_{\eo} : \spec \eo \to \eX_{\eo}$ and $\Eh_{x_{\eo}} = x^*_{\eo} \Eh = \varprojlim_n \Eh_{x_n}$ is the corresponding free $\eo$-module. 

In this way we obtain a continuous functor
\[
\rho_{\Eh} : \Pi_1 (X) \longrightarrow \Free_r (\eo) \quad \text{where} \; r = \rank \, \Eh \; .
\]
With notation as in Propositions \ref{cd-t29} and \ref{cd-t30} we have $\rho_{\alpha^* \Eh} = \rho_{\Eh, n} \verk \alpha_*$ for any morphism $\alpha : \teX \to \eX$ and $\rho_{{}^{\sigma}\!\Eh} = \sigma_* \verk \rho_{\Eh} \verk \sigma^{-1}_*$ for $\sigma \in G_{\Q_p}$. 

\begin{prop} \label{prop-rho} By the previous construction, we  get an $\eo$-linear, exact functor 
\[
\rho : \Bh^s (\eX_{\eo}) \longrightarrow \Rep_{\Pi_1 (X)} (\eo)
\]
into the category of continuous functors from $\Pi_1 (X)$ to $\Free (\eo)$. Here $\Free (\eo)$ is the topological category of free $\eo$-modules of finite rank. The functor $\rho$ commutes with tensor products, duals, internal homs and exterior powers. Moreover for any morphism $\alpha : \teX \to \eX$ over $\oZ_p$ of models $\eX$ and $\teX$ with smooth proper generic fibres $X$ and $\tX$ we have a canonical isomorphism of functors $\rho \verk \alpha^* = A (\alpha) \verk \rho$ with $A (\alpha)$ similarly defined as in Proposition \ref{cd-t31}. 
\end{prop}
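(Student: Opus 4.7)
The plan is to assemble the proposition from the mod $p^n$ theory already developed in Theorem \ref{cd-t30n} and Proposition \ref{cd-t31} by taking a projective limit over $n$. The key compatibility, already noted in the paragraph preceding the statement, is that any good trivializing cover for $\Eh_{n+1}$ over $(\eX,U)$ is automatically a good trivializing cover for $\Eh_n$, and inspection of Construction \ref{cd-t26} shows that reducing $\rho_{\Eh,n+1}(\gamma)$ modulo $p^n$ yields $\rho_{\Eh,n}(\gamma)$ for every \'etale path $\gamma$. Hence the $\rho_{\Eh,n}$ form a compatible system of functors $\Pi_1(X) \to \Free_r(\eo_n)$ and I would define $\rho_{\Eh} := \varprojlim_n \rho_{\Eh,n}$, which lands in $\Free_r(\eo) = \varprojlim_n \Free_r(\eo_n)$. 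Continuity of $\rho_\Eh$ on morphism spaces is automatic, since the target carries the projective limit topology and each $\rho_{\Eh,n}$ factors through a finite quotient of the compact, totally disconnected space $\Mor_{\Pi_1(X)}(x,x')$ by quasi-finiteness of $\Free_r(\eo_n)$ (as used already for Theorem \ref{cd-t24}).

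Next, for a morphism $f : \Eh \to \tEh$ in $\Bh^s(\eX_\eo)$ I would set $\rho_f := \varprojlim_n \rho_{f,n}$, where $\rho_{f,n}$ is the natural transformation built in Theorem \ref{cd-t30n}. That this is a natural transformation between the limit functors reduces to commutativity of the diagram \eqref{eq:cd10} at each level, which is already known; passage to the limit preserves commutativity since it is a filtered limit of diagrams of free $\eo_n$-modules with surjective transition maps. This gives the functor $\rho : \Bh^s(\eX_\eo) \to \Rep_{\Pi_1(X)}(\eo)$ on objects and morphisms.

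For $\eo$-linearity and exactness, I would work fibrewise: given a short exact sequence $0 \to \Eh' \to \Eh \to \Eh'' \to 0$ in $\Bh^s(\eX_\eo)$, pulling back to a point $x \in X(\C_p)$ and reducing mod $p^n$ yields a short exact sequence of free $\eo_n$-modules (the assertion that each term remains a free $\eo_n$-module follows because $\Eh'$, $\Eh$, $\Eh''$ are vector bundles and $\spec \eo_n \to \eX_\eo$ is flat over sections). Exactness of $\rho_n$ at each level is recorded in Theorem \ref{cd-t30n}; exactness of $\rho$ follows by taking the projective limit, using Mittag-Leffler (the transition maps between levels are surjective on free $\eo_n$-modules). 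Compatibility with tensor products, duals, internal homs and exterior powers also reduces to the corresponding statement at each level in Theorem \ref{cd-t30n}: for two bundles $\Eh_1,\Eh_2$ one uses Lemma \ref{cd-t17} to produce a single cover trivializing both $\Eh_{i,n}$, then invokes the formulas from Construction \ref{cd-t26}, and finally passes to the limit. Finally, the functoriality statement $\rho \circ \alpha^* \cong A(\alpha) \circ \rho$ is the projective limit of Proposition \ref{cd-t31}.

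The most delicate step, and the only genuine obstacle I foresee, is verifying that all the mod $p^n$ constructions combine into a well-defined continuous functor on a \emph{single} object of $\Rep_{\Pi_1(X)}(\eo)$ — in particular, that the projective limit of the uniquely determined functors provided by the Seifert--van Kampen Theorem \ref{cd-t24} at each level is itself the Seifert--van Kampen functor associated to the natural $p$-adic topology on $\Free_r(\eo)$. Once one checks that $\Free_r(\eo)$ is a topological groupoid to which a continuous version of Theorem \ref{cd-t24} ii) applies (which is straightforward from the pro-finite structure already used), the remainder is a mechanical combination of the cited results.
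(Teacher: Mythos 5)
Your proposal is correct and takes the paper's approach: $\rho$ is the projective limit of the functors $\rho_n$ from Theorem \ref{cd-t30n}, and its properties all descend from the mod $p^n$ statements together with Proposition \ref{cd-t31}, exactly as you describe. Two remarks are in order. First, a small inaccuracy: the fibre sequence $0 \to \Eh'_{x_n} \to \Eh_{x_n} \to \Eh''_{x_n} \to 0$ remains exact not because $\spec \eo_n \to \eX_\eo$ is flat (it is not, since $\eo_n$ is not flat over $\eo$), but because a short exact sequence of vector bundles is locally split and hence preserved by arbitrary pullback; the conclusion stands, the justification does not. Second, and more to the point, the ``delicate step'' you flag at the end does not arise. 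You have already constructed $\rho_\Eh = \varprojlim_n \rho_{\Eh,n}$ from the functors $\rho_{\Eh,n}$ produced by Seifert--van Kampen at each finite level $n$, where Theorem \ref{cd-t24}~ii) applies because $\Free_r(\eo_n)$ is quasi-finite. You neither need nor can invoke Theorem \ref{cd-t24}~ii) directly for the target $\Free_r(\eo)$: this groupoid is \emph{not} quasi-finite, since already $1+p \in \eo^{\times} = \GL_1 (\eo)$ generates an infinite cyclic subgroup. But no such invocation is required: a projective limit of compatible functors is a functor for purely categorical reasons, and continuity of $\rho_\Eh$ on morphism spaces is automatic because each $\rho_{\Eh,n}$ is continuous with discrete target and $\GL_r (\eo)$ carries the projective limit topology. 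The rest of your argument is sound.
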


\begin{proof}: This follows from the construction and Theorem \ref{cd-t30n}.
\end{proof}

Let $\emm$ be the maximal ideal of $\eo$. We end this section with an alternative description of the reduction modulo $ \emm$ of $\rho_{\Eh,x_{\eo}}$. 

For a vector bundle $\Eh$ in $\Bh^s (\eX_{\eo})$ and a point $x \in X (\C_p)$ we obtain from $\rho_\Eh$ a continuous representation
\[
\rho_{\Eh , x_{\eo}} : \pi_1 (X,x) \longrightarrow \GL (\Eh_{x_{\eo}}) := \Aut_{\eo} (\Eh_{x_{\eo}}) \; .
\]
The reduction of $\rho_{\Eh,x}$ modulo $ \emm$ is the representation
\[
\rho_{\Eh , x_{\eo}} \otimes k : \pi_1 (X,x) \longrightarrow \GL (\Eh_{x_k})
\]
obtained by reducing $\rho_{\Eh , x_{\eo}} (\gamma) $ modulo $ \emm$ for every $\gamma \in \pi_1 (X,x)$. Here $x_k$ is the image of $x_{\eo}$ under reduction $\eX (\eo) \to \eX (k)$ where $k = \eo / \emm \cong \oF_p$. Clearly
\[
\rho_{\Eh, x_{\eo}} \otimes k = \rho_{\Eh , n , x} \otimes k \quad \text{for any} \; n \ge 1 \; ,
\]
where the right hand side is also defined by reduction.

By \cite{SGA1}, Exp. IX, Proposition 1.7 the inclusion $\eX^{\red}_k \hookrightarrow \eX_k$ induces an isomorphism $\pi_1 (\eX^{\red}_k , x_k) \iso \pi_1 (\eX_k , x_k)$. Moreover the inclusion $\eX_k \hookrightarrow \eX$ induces a natural isomorphism $\pi_1 (\eX_k , x_k) \iso \pi_1 (\eX , x_k)$. This follows from \cite{SGA1} Exp. X, Th\'eor\`eme 2.1 together with an argument to reduce the finitely presented case to a Noetherian situation as in the proof of \cite{SGA1} exp. IX, Th\'eor\`eme 6.1, p. 254 above. We also have a canonical isomorphism
\[
\pi_1 (\eX , x_k) = \Aut (F_{x_k}) = \Aut (F_x) = \pi_1 (\eX , x) \; .
\]
The reason is this: For any finite \'etale covering $\Yh \to \eX$, by the infinitesimal lifting property any point $y_k \in \Yh_k (k)$ over $x_k$ determines a unique section $y_{\eo} \in \Yh (\eo)$ over $x_{\eo} \in \eX (\eo)$ and hence a point $y \in Y (\C_p)$ over $x \in X (\C_p)$. This gives a natural bijection between the points $y_k$ over $x_k$ and the points $y$ over $x$. Thus the two fiber functors $F_{x_k}$ and $F_x$ on the finite \'etale coverings of $\eX$ are isomorphic. In conclusion we get a natural isomorphism 
\[
\pi_1 (\eX^{\red}_k , x_k) \iso \pi_1 (\eX_k , x_k) = \pi_1 (\eX , x_k) = \pi_1 (\eX , x) 
\]
and hence a specialization homomorphism
\[
\ssp_k : \pi_1 (X,x) \longrightarrow \pi_1 (\eX , x) = \pi_1 (\eX^{\red}_k , x_k) \; .
\]
Note that example 5 in \cite{De} shows that in general the representation $\rho_{\Eh , x_{\eo}}$ does not factor over $\ssp_k$. However the representation $\rho_{\Eh, x_{\eo}} \otimes k$ does factor over $\ssp_k$ as specified in the next theorem. The restriction $\Eh^{\red}_k$ of $\Eh_k$ to $\eX^{\red}_k$ is numerically flat by definition. Hence $\Eh^{\red}_k$ determines an algebraic representation of the $S$-fundamental group $\pi^S_1 (\eX^{\red}_k , x_k)$ on $(\Eh^{\red})_{x_k} = \Eh_{x_k}$. By Corollary \ref{cd-t5} we have $\pi^S_1 (\eX^{\red}_k , x_k)\cong \pi^N_1 (\eX^{\red}_k , x_k)$ if we assume that $\eX^{\red}_k$ is projective. Hence we get in this case an algebraic representation
\[
\blambda : \pi^N_1 (\eX^{\red}_k , x_k) \longrightarrow \bGL_{\Eh_{x_k}} \; .
\]
The group of $k$-valued points of the Nori fundamental group is the Grothendieck fundamental group. Hence we get a continuous representation where $\GL (\Eh_k)$ carries the discrete topology
\[
\lambda = \blambda (k) : \pi_1 (\eX^{\red}_k , x_k) \longrightarrow \bGL_{\Eh_k} (k) = \GL (\Eh_k) \; .
\]

\begin{theorem} \label{cd-t32}
Let $X$ be a smooth, complete variety over $\oQ_p$, $\eX$ a model of $X$ over $\oZ_p$ with projective special fibre $\eX_k$, $\Eh$ a vector bundle in $\Bh^s (\eX_{\eo})$ and $x \in X (\C_p)$. With notation as above the following diagram commutes:
\[
\xymatrix{
\pi_1 (X,x) \ar[r]^{\rho_{\Eh , x_{\eo}} \otimes k} \ar[d]_{\ssp_k} & \GL (\Eh_{x_k}) \ar@{=}[d] \\
\pi_1 (\eX^{\red}_k , x_k) \ar[r]^{\lambda} & \GL (\Eh_{x_k}) \; .
}
\]
\end{theorem}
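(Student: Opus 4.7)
The overall strategy is to evaluate both $\rho_{\Eh, x_\eo} \otimes k$ and $\lambda \verk \ssp_k$ explicitly on a single cover of $\eX$ trivializing $\Eh$ modulo $\emm$, and then match them via the henselian lifting identification of finite \'etale covers that is built into $\ssp_k$. Since neither side sees nilpotents in $\Oh_{\eX_k}$, I will immediately pass to $\eX_k^{\red}$ using the natural isomorphism $\pi_1(\eX_k^{\red}, x_k) \iso \pi_1(\eX_k, x_k)$ recalled just before the theorem.

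By Theorems \ref{cd-t22} and \ref{cd-t23}, pick a quasi-projective open neighborhood $U \subset X$ of $x$ and a good trivializing cover $\mu : \Zh \to \eX$ for $\Eh$ modulo $p$ over the proper pair $(\eX, U)$; the bundle $\mu_k^\ast \Eh_k$ is then trivial on the geometrically connected $\Zh_k$ by base change along $\eo_1 \to k$. Reducing Construction \ref{cd-t26} (for $n=1$) along $\eo_1 \to k$ yields, for any \'etale loop $\gamma \in \pi_1(U,x) = \pi_1(X,x)$ and any chosen lift $z \in \mu^{-1}(x) \subset \Zh(\eo)$, the formula
\[
\rho_{\Eh, x_\eo}(\gamma) \otimes k \;=\; (\gamma z)_k^\ast \verk (z_k^\ast)^{-1} \colon \Gamma(\Zh_k, \mu_k^\ast \Eh_k) \iso \Eh_{x_k},
\]
where $z_k, (\gamma z)_k \in \Zh(k)$ are the $\emm$-reductions of $z$ and $\gamma z$.

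By the henselian lifting description of the identification $\pi_1(\eX, x) = \pi_1(\eX_k^{\red}, x_k)$ given in the paragraph preceding the theorem, the reduction map $\Zh(\eo) \to \Zh(k)$ intertwines the $\pi_1(X,x)$-action on $\mu^{-1}(x)$ with the $\pi_1(\eX_k^{\red}, x_k)$-action on $\mu_k^{-1}(x_k)$, so that $(\gamma z)_k = \ssp_k(\gamma) \cdot z_k$. This already shows that $\rho_{\Eh, x_\eo} \otimes k$ factors through $\ssp_k$ as some $\bar{\rho} : \pi_1(\eX_k^{\red}, x_k) \to \GL(\Eh_{x_k})$. To identify $\bar{\rho}$ with $\lambda$, I recall from the proof of Theorem \ref{cd-t22} that $\mu_k^{\red}$ factors as $\Zh_k^{\red} \to Y_k \xrightarrow{F^{\nu}} Y_k \xrightarrow{\pi_k} \eX_k^{\red}$ with $\pi_k$ finite \'etale. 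Since $F^{\nu}$ is a universal homeomorphism, the Galois action on $\mu_k^{-1}(x_k)$ reduces to the Galois action on the \'etale fibre $\pi_k^{-1}(x_k)$, and $\bar{\rho}$ is the representation on $\Eh_{x_k} = \Gamma(Y_k, \varphi^\ast \Eh_k)$ read off from the trivialization $\varphi^\ast \Eh_k \cong \Oh^r$, where $\varphi = \pi_k \verk F^{\nu}$. This is precisely the $k$-point level Tannakian description of $\lambda$ via the equivalence $\pi_1^S \iso \pi_1^N$ of Corollary \ref{cd-t5} applied to the numerically flat bundle $\Eh_k$ on $\eX_k^{\red}$.

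The main obstacle will be this last Tannakian identification: one must check carefully that the explicit recipe above, using the trivializing cover $\varphi = \pi_k \verk F^{\nu}$, agrees with the abstract monodromy representation $\lambda$ furnished by Nori's formalism. Once one notes that $\lambda$ is the $k$-point action of $\pi_1^N$ obtained by writing $\Eh_k$ as a representation of its monodromy group scheme and that this action is recovered on any trivializing cover by reading off the Galois action on global sections, the identification becomes formal. The remaining steps — reduction to reduced special fibre, passage of Construction \ref{cd-t26} to the $\emm$-reduction, and the Galois compatibility under $\ssp_k$ — are routine consequences of the setup and of results already established in the paper.
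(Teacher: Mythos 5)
The paper's own proof of Theorem \ref{cd-t32} is omitted; it is merely stated as an adaptation of \cite{De}, Theorem 1 using the considerations of Section \ref{sec:cd5}, so there is no explicit paper argument to compare against.

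Your overall strategy is in the right spirit, but the intertwining claim in your third paragraph is not justified as stated. You speak of ``the $\pi_1(\eX_k^{\red}, x_k)$-action on $\mu_k^{-1}(x_k)$,'' but $\mu : \Zh \to \eX$ is proper and only generically finite \'etale, and $\mu_k : \Zh_k \to \eX_k^{\red}$ is not finite \'etale --- the special-fibre factorization involves $F^\nu$, which is not \'etale --- so there is no such action defined a priori, and the ``henselian lifting'' argument (which applies to finite \'etale covers of $\eX$) does not directly produce it. To make the factorization through $\ssp_k$ honest one must route the argument through the genuinely finite \'etale cover $\pi : \Yh \to \eX$ produced in the proof of Theorem \ref{cd-t22}: project $z$ and $\gamma z$ via $\Zh \to \Yh$ to $\eo$-points $\hat w, \gamma\hat w$ of $\Yh$ (this is compatible with the $\pi_1(U,x)$-actions because $\Zh \to \Yh$ restricts over $U$ to a morphism of finite \'etale covers of $U$), apply the infinitesimal lifting property for the finite \'etale $\pi$ to get $(\gamma\hat w)_k = \ssp_k(\gamma)\cdot\hat w_k$ inside $\pi_k^{-1}(x_k)$, and then recompute the parallel transport directly on $\Yh_k$ using $\Gamma(\Zh_k,\Oh)=k=\Gamma(\Yh_k,\Oh)$ together with the observation that the trivialization of $\varphi^*\Eh_k$ lives on the domain of $F^\nu$ while the fibre-functor action lives on the codomain; the bijection $F^\nu$ on $k$-points must be explicitly inserted here. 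Your fourth paragraph gestures at this factorization but treats it as a consequence of paragraph three rather than as the justification for it, which reverses the logical dependency. Finally, identifying the factored representation with $\lambda$ is indeed, as you flag, the real obstacle: the trivializing cover $\varphi = \pi_k\circ F^\nu$ is not in general a torsor under the monodromy group scheme of $\Eh_k^{\red}$, so matching the explicit cover recipe with the Tannakian construction behind Corollary \ref{cd-t5} (passing through the torsor $P\to\eX_k^{\red}$ of \cite{Su}) requires a genuine comparison argument, not just ``reading off the Galois action on global sections.''
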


\begin{proof} The proof of the corresponding result for curves given in \cite{De}, Theorem 1 can be adapted to the higher dimensional situation using the considerations in section \ref{sec:cd5}. We omit the details. 
\end{proof}
\section{Parallel transport on $\bm{X_{\C_p}}$} \label{sec:cd4}
In this section we construct parallel transport for suitable vector bundles over $\C_p$. 

\begin{defn} Let $X$ be a smooth and complete variety over $\oQ_p$.

i) We denote by $\Bh^s (X_{\C_p})$  the (full) category of vector bundles $E$ on $X_{\C_p}$ which up to isomorphism have an extension to a vector bundle $\Eh$ in $\Bh^s (\eX_{\eo})$ for some model $\eX$ over $\oZ_p$ of $X$. 

ii) Let $E$ be a vector bundle in $\Bh^s (X_{\C_p})$, and let $j_{\eX} : X \hookrightarrow \eX$ be the open immersion given by i). We define a functor \[
j^*_{\eX_{\eo}} : \Bh^s (\eX_{\eo}) \otimes \Q \longrightarrow \Bh^s (X_{\C_p}) \; .
\]
by pullback via  $j_{\eX_{\eo}} = j_{\eX} \otimes \eo : X_{\C_p} \hookrightarrow \eX_{\eo}$.
Here for an additive category $\Bh$ the category $\Bh \otimes \Q$ has the same objects as $\Bh$, and the morphism groups are the ones of $\Bh$ tensored with $\Q$. 
\end{defn}

Note that by flat base change the functor $j^*_{\eX_{\eo}}$ is fully faithful making $\Bh^s (\eX_{\eo}) \otimes \Q$ a full subcategory of $\Bh^s (X_{\C_p})$. By definition every bundle $E$ in $\Bh^s (X_{\C_p})$ is isomorphic to a bundle of the form $j^*_{\eX_{\eo}} \Eh$ with $\Eh$ in $\Bh^s (\eX_{\eo})$ for some model $\eX$ of $X$ over $\oZ_p$. The direct limit of the subcategories $\Bh^s (\eX_{\eo}) \otimes \Q$ is filtered in the following sense.

\begin{theorem} \label{cd-t33}
Let $X$ be a smooth complete variety over $\oQ_p$ with models $\eX_1$ and $\eX_2$ over $\oZ_p$. Then there is another model $\eX_3$ of $X$ together with morphisms
\[
\eX_1 \xleftarrow{p_1} \eX_3 \xrightarrow{p_2} \eX_2
\]
restricting to the identity on the generic fibres (after their identifications with $X$). This gives rise to a commutative diagram of fully faithful functors
\[
\xymatrix{
\Bh^s (\eX_{1\eo}) \otimes \Q \ar[dr]^{p^*_1} \ar@/^1pc/[drr]^{j^*_{\eX_{1\eo}}} \\
& \Bh^s (\eX_{3\eo}) \otimes \Q \ar[r]^{j^*_{\eX_{3\eo}}} & \Bh^s (X_{\C_p}) \\
\Bh^s (\eX_{2\eo}) \otimes {\Q} \ar[ur]_{p^*_2} \ar@/_1pc/[urr]_{j^*_{\eX_{2\eo}}} 
}
\]
\end{theorem}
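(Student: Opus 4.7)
The plan is to take $\eX_3$ to be the scheme-theoretic closure of the diagonal inside the fibre product $\eX_1 \times_{\oZ_p} \eX_2$. Since the models $\eX_i$ are by definition finitely presented, I would first descend them to a common ring of integers $\eo_K$ for a finite extension $K$ of $\Q_p$, perform the construction there, and then base change back to $\oZ_p$; this avoids any issue with closures in non-Noetherian bases.

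Over $\eo_K$, I would form the proper $\eo_K$-scheme $\eX_{1,K} \times_{\eo_K} \eX_{2,K}$, whose generic fibre is $X_K \times_K X_K$. Because $X_K$ is separated over $K$, the diagonal $\Delta : X_K \hookrightarrow X_K \times_K X_K$ is a closed immersion with irreducible image. Let $\eX_{3,K}$ be the closure of $\Delta(X_K)$ in $\eX_{1,K} \times_{\eo_K} \eX_{2,K}$ equipped with the reduced induced subscheme structure. Then $\eX_{3,K}$ is an integral, proper $\eo_K$-scheme whose generic fibre is $X_K$; since it is integral and dominates $\spec \eo_K$, it is flat over $\eo_K$. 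Base changing to $\oZ_p$ gives a model $\eX_3$ of $X$ over $\oZ_p$ by Lemma \ref{cd-t7}. The two compositions $p_i : \eX_3 \hookrightarrow \eX_1 \times_{\oZ_p} \eX_2 \xrightarrow{\pr_i} \eX_i$ restrict to the identity on generic fibres by construction.

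For the diagram of functors, I would first observe that pullback along an arbitrary morphism of complete connected $k$-schemes sends numerically flat bundles to numerically flat bundles, since nefness of $\Oh_{\Pa(E)}(1)$ is preserved by pullback via $\Pa(f^\ast E) \to \Pa(E)$. Hence $p_i^\ast$ sends $\Bh^s(\eX_{i,\eo})$ into $\Bh^s(\eX_{3,\eo})$ and descends to the required functor on the $\otimes \Q$ categories; full faithfulness follows from flat base change just as for the $j^\ast_{\eX_{i,\eo}}$. Commutativity of the diagram then reduces to the identity of morphisms $j_{\eX_i,\eo} = p_{i,\eo} \verk j_{\eX_3,\eo} : X_{\C_p} \to \eX_{i,\eo}$, which holds by construction, together with the functoriality $j_{\eX_3,\eo}^\ast \verk p_{i,\eo}^\ast \cong (p_{i,\eo} \verk j_{\eX_3,\eo})^\ast = j_{\eX_i,\eo}^\ast$.

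I do not expect any genuine obstacle: the only slightly delicate point is the descent to a Noetherian base to make sense of "closure of the diagonal", after which integrality, flatness, properness and the identification of the generic fibre with $X$ are all formal consequences of standard properties of scheme-theoretic images.
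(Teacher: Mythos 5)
Your construction coincides with the paper's: both descend to a finite extension $\eo_K$, take the reduced closure of the diagonal inside $\eX_{1,K}\times_{\eo_K}\eX_{2,K}$, verify integrality, properness and flatness, and base change back to $\oZ_p$. The only substantive addition on your side is spelling out the functor-diagram verification (preservation of numerical flatness under pullback and $j_{\eX_i,\eo}=p_{i,\eo}\verk j_{\eX_3,\eo}$), which the paper leaves implicit; the citation of Lemma~\ref{cd-t7} for the base change step is superfluous, since being a model is stable under base change by definition, but this does not affect correctness.
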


\begin{proof}
We only need to show the existence of $\eX_3$ dominating $\eX_1$ and $\eX_2$. By finite presentation we may assume (with an abuse of notation) that $X$ is a variety over a finite extension $K$ of $\Q_p$ in $\oQ_p$ and $\eX_1 , \eX_2$ are models of $X$ over $\eo_K$. Let $\eX_3$ be the closure of the image of the diagonal
\[
X \xrightarrow{\Delta} X \times_{\spec K} X \longrightarrow \eX_1 \times_{\spec \eo_K} \eX_2
\]
endowed with the reduced subscheme structure. Then $\eX_3$ is integral and proper and flat over $\spec \eo_K$. Moreover there are morphisms $\eX_1 \xleftarrow{p_1} \eX_3 \xrightarrow{p_2} \eX_2$ induced by the projections. Base changing from $\eo_K$ to $\eo$ we get the assertion. 
\end{proof}

\begin{cor} \label{cd-t34}
The additive $\C_p$-linear category $\Bh^s (X_{\C_p})$ is stable under extensions, tensor products, duals, internal homs and exterior powers.
\end{cor}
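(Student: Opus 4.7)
The plan is to treat the five closure properties in two groups, exploiting the fact that $\Bh^s(\eX_\eo)$ already has them by Proposition \ref{cd-t20}. For tensor products, duals, internal homs and exterior powers, the strategy is to combine two models into a common one: given $E_1, E_2 \in \Bh^s(X_{\C_p})$ with extensions $\Eh_i \in \Bh^s(\eX_{i\eo})$ on models $\eX_i$, I would invoke Theorem \ref{cd-t33} to find a common dominating model $\eX_3$ and observe that the pullbacks $p_i^* \Eh_i$ remain in $\Bh^s(\eX_{3\eo})$. Indeed, on special fibres the pullback of a numerically flat bundle along any morphism of complete $k$-schemes is again numerically flat, because nef line bundles pull back to nef line bundles on any such morphism. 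Proposition \ref{cd-t20} then gives closure on $\eX_{3\eo}$, and pulling back via $j^*_{\eX_{3\eo}}$ (which commutes with these operations) produces the corresponding bundle on $X_{\C_p}$.

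The extension case requires more work. Given a short exact sequence $0 \to E_1 \to E \to E_2 \to 0$ on $X_{\C_p}$ with $E_1, E_2 \in \Bh^s(X_{\C_p})$, I would pass to a common model $\eX$ and extensions $\Eh_1, \Eh_2 \in \Bh^s(\eX_\eo)$ as above. The classifying class lies in $\Ext^1_{X_{\C_p}}(E_2, E_1) = H^1(X_{\C_p}, \uHom(E_2, E_1))$. Since $\eX_\eo$ is flat, proper and finitely presented over $\eo$ and $\spec \C_p \to \spec \eo$ is the flat localization inverting $p$, flat base change yields
\[
H^1(\eX_\eo, \uHom(\Eh_2, \Eh_1)) \otimes_\eo \C_p \;=\; H^1(X_{\C_p}, \uHom(E_2, E_1)) \; .
\]
Using $\C_p = \eo[1/p]$, the class $[E]$ can therefore be written as $p^{-n} \cdot c|_{X_{\C_p}}$ for some $c \in \Ext^1_{\eX_\eo}(\Eh_2, \Eh_1)$ and some $n \ge 0$. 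Choosing an extension $0 \to \Eh_1 \to \Fh \to \Eh_2 \to 0$ on $\eX_\eo$ representing $c$, Proposition \ref{cd-t20} places $\Fh$ in $\Bh^s(\eX_\eo)$.

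The remaining point, which I expect to be the main obstacle, is to verify that $j^*_{\eX_\eo} \Fh$ is actually isomorphic to $E$ as a vector bundle, even though the classifying class of the restricted extension equals $p^n [E]$ rather than $[E]$. The idea is that multiplying an extension class by an invertible scalar $\lambda \in \C_p^\times$ amounts to pushing out along the automorphism $\lambda \cdot \id_{E_1}$ of $E_1$; the explicit description of the pushout as a quotient of $E_1 \oplus E$ by the antidiagonal image of $E_1$ under $(-\lambda \cdot \iota, \iota)$ then yields a canonical isomorphism between the old and new middle terms. Applied with $\lambda = p^n$ this gives $j^*_{\eX_\eo}\Fh \cong E$, so $E \in \Bh^s(X_{\C_p})$. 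A secondary technical point is the flat base change above over the non-Noetherian valuation ring $\eo$, which I would handle by first descending $\eX$ and the bundles $\Eh_i$ to the ring of integers $\eo_K$ of a suitable finite extension $K/\Q_p$ and invoking the classical Noetherian flat base change there.
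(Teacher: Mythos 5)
Your proposal is correct and follows essentially the same route as the paper: reduce to a common dominating model via Theorem \ref{cd-t33}, invoke Proposition \ref{cd-t20} on that model, and for extensions use the base-change identity $\Ext^1_{\eX_\eo}(\Eh_2,\Eh_1)\otimes_\eo\C_p = \Ext^1_{X_{\C_p}}(E_2,E_1)$ to realize a $p^n$-multiple of the class over $\eo$. Your two supplementary observations --- that scaling the extension class by the unit $p^n\in\C_p^\times$ preserves the isomorphism type of the middle term via the pushout construction, and that flat base change over the non-Noetherian ring $\eo$ is justified by descent to some $\eo_K$ --- are correct and fill in details the paper leaves implicit.
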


\begin{proof}
This follows from Propositions \ref{cd-t20} and \ref{cd-t33}. Also note that for vector bundles $\Eh_1 , \Eh_2$ on $\eX_{\eo}$ with restrictions to $E_1, E_2$ on $X_{\C_p}$ we have
\[
\Ext^1_{\eX_{\eo}} (\Eh_1 , \Eh_2) \otimes_{\eo} \C_p = \Ext^1_{X_{\C_p}} (E_1 , E_2) \; .
\]
Moreover, if $\Eh$ is a quasi-coherent sheaf on $\eX_{\eo}$ in an extension 
\[
0 \longrightarrow \Eh_2 \longrightarrow \Eh \longrightarrow \Eh_1 \longrightarrow 0 
\]
then $\Eh$ is a vector bundle since the extension splits locally.
\end{proof}

For every morphism $\alpha : \teX \to \eX$ of models of $X$, by Proposition \ref{prop-rho} we have a commutative diagram
\[
\xymatrix{
\Bh^s (\eX_{\eo}) \ar[rr]^{\alpha^*} \ar[dr]_{\rho} & & \Bh^s (\teX_{\eo}) \ar[dl]^{\rho} \\
 & \Rep_{\Pi_1 (X)} (\eo) \; . &
}
\]
Let $\Rep_{\Pi_1 (X)} (\C_p)$ be the category of continuous functors from $\Pi_1 (X)$ to the category $\nVec (\C_p)$ of finite dimensional $\C_p$-vector spaces. The parallel transport $\rho$ gives a $\C_p$-linear functor
\[
\rho \otimes \Q : \Bh^s (\eX_{\eo}) \otimes \Q \xrightarrow{\rho \otimes \Q} \Rep_{\Pi_1 (X)} (\eo) \otimes \Q \longrightarrow \Rep_{\Pi_1 (X)} (\C_p) \; .
\]
Now Proposition \ref{cd-t33} and the commutativity of the diagram
\[
\xymatrix{
\Bh^s (\eX_{\eo}) \otimes \Q \ar[rr]^{\alpha^*} \ar[dr]_{\rho \otimes \Q} & & \Bh^s (\teX_{\eo}) \otimes \Q \ar[dl]^{\rho \otimes \Q} \\
 & \Rep_{\Pi_1 (X)} (\C_p)
}
\]
imply the following result for any smooth proper variety $X$ over $\Q_p$. 

\begin{prop} \label{cd-t35}
There is a uniquely defined $\C_p$-linear continuous functor
\[
\rho : \Bh^s (X_{\C_p}) \longrightarrow \Rep_{\Pi_1 (X)} (\C_p)
\]
such that for every model $\eX$ over $\oZ_p$ of $X$ we have
\[
\rho \verk j^*_{\eX_{\eo}} = \rho \otimes \Q : \Bh^s (\eX_{\eo}) \otimes \Q \longrightarrow \Rep_{\Pi_1 (X)} (\C_p) \; .
\]
The functor $\rho$ is exact and respects tensor products, duals, internal homs and exterior powers.
\end{prop}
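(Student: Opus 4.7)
The plan is to bootstrap $\rho$ from the functors on the integral categories provided by Proposition \ref{prop-rho}. For each model $\eX$ of $X$ over $\oZ_p$, that proposition yields an exact $\C_p$-linear functor $\rho^{(\eX)} : \Bh^s(\eX_{\eo}) \otimes \Q \to \Rep_{\Pi_1(X)}(\C_p)$, namely $\rho \otimes \Q$ composed with extension of scalars from $\eo$ to $\C_p$. These are linked by canonical isomorphisms $\rho^{(\teX)} \verk \alpha^* \cong \rho^{(\eX)}$ for every morphism $\alpha : \teX \to \eX$ of models restricting to the identity on the generic fibre $X$; this is immediate from $\rho \verk \alpha^* \cong A(\alpha) \verk \rho$ in Proposition \ref{prop-rho}, because $A(\alpha)$ acts as the identity on $\Rep_{\Pi_1(X)}(\C_p)$ when $\alpha_* = \id_X$ on paths. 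Uniqueness of $\rho$ on $\Bh^s(X_{\C_p})$ will then be automatic from the prescribed compatibility $\rho \verk j^*_{\eX_{\eo}} = \rho \otimes \Q$, since by definition every object of $\Bh^s(X_{\C_p})$ lies in the essential image of some $j^*_{\eX_{\eo}}$.

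First I would define $\rho$ on objects. For $E \in \Bh^s(X_{\C_p})$, choose a model $\eX$ and an extension $\Eh \in \Bh^s(\eX_{\eo})$ with $j^*_{\eX_{\eo}} \Eh \iso E$, and set $\rho(E) := \rho^{(\eX)}(\Eh)$. To prove independence of the choices, given a second presentation $(\eX' , \Eh')$ I would apply Theorem \ref{cd-t33} to produce a dominating model $\eX_3$ together with morphisms $p : \eX_3 \to \eX$ and $p' : \eX_3 \to \eX'$ that both restrict to the identity on $X$. Then $p^* \Eh$ and $p'^* \Eh'$ are two extensions of $E$ to $\eX_3$, and full faithfulness of $j^*_{\eX_{3\eo}}$ on rationalized categories produces a canonical isomorphism between them in $\Bh^s(\eX_{3\eo}) \otimes \Q$; applying $\rho^{(\eX_3)}$ and invoking the naturality identifications above yields the canonical isomorphism $\rho^{(\eX)}(\Eh) \iso \rho^{(\eX')}(\Eh')$. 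A cocycle check on triples of models ensures that the resulting comparison isomorphisms are coherent.

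Next I would define $\rho$ on morphisms. Given $f : E_1 \to E_2$ in $\Bh^s(X_{\C_p})$, iterated application of Theorem \ref{cd-t33} supplies a single model $\eX$ to which both $E_1$ and $E_2$ extend as $\Eh_1 , \Eh_2 \in \Bh^s(\eX_{\eo})$; full faithfulness of $j^*_{\eX_{\eo}}$ then delivers a unique $\tilde f : \Eh_1 \to \Eh_2$ in $\Bh^s(\eX_{\eo}) \otimes \Q$ restricting to $f$, and I would set $\rho(f) := \rho^{(\eX)}(\tilde f)$. Independence of $\eX$ is handled exactly as for objects; compatibility with composition and $\C_p$-linearity are inherited from each $\rho^{(\eX)}$; continuity of $\rho(E)$ as a functor on $\Pi_1(X)$ is inherited from that of $\rho_\Eh$ after tensoring with $\C_p$. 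For exactness I would use the identification $\Ext^1_{\eX_{\eo}}(\Eh_1 , \Eh_2) \otimes_{\eo} \C_p = \Ext^1_{X_{\C_p}}(E_1 , E_2)$ noted in the proof of Corollary \ref{cd-t34} to lift a short exact sequence in $\Bh^s(X_{\C_p})$ to a rational extension over some $\eX_{\eo}$, after which exactness of $\rho^{(\eX)}$ from Theorem \ref{cd-t30n} together with flatness of $\C_p/\eo$ gives the result. Compatibility with tensor products, duals, internal homs, and exterior powers is handled in the same way, reducing each operation to the corresponding compatibility established in Theorem \ref{cd-t30n}.

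The main obstacle, which is bookkeeping rather than a new idea, will be verifying that the comparison isomorphisms between different model choices are coherent enough to assemble into a genuine functor, rather than merely a map on isomorphism classes. The naturality of $\alpha^*$ relative to $\rho$ recorded in Proposition \ref{prop-rho} is the crucial input that makes these checks go through; the filtered-system structure from Theorem \ref{cd-t33} then reduces everything to a diagram chase over dominating models.
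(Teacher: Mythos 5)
Your proposal is correct and follows essentially the same route as the paper, which leaves the argument implicit: extend an object/morphism to some model, apply the integral parallel transport of Proposition~\ref{prop-rho} rationalized via $\rho \otimes \Q$, use the filtered structure of models from Theorem~\ref{cd-t33} together with the compatibility $\rho \verk \alpha^* = A(\alpha) \verk \rho$ (where $A(\alpha) = \id$ since $\alpha$ restricts to the identity on $X$) and full faithfulness of $j^*_{\eX_\eo}$ to verify well-definedness. You spell out the bookkeeping (the cocycle check on triples of models, lifting exact sequences via the $\Ext$-identification) that the paper compresses into a single sentence, and you correctly source exactness and compatibility with tensor operations from Theorem~\ref{cd-t30n} and Proposition~\ref{prop-rho}.
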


Let us write down the explicit description of the functor $\rho$ in Proposition \ref{cd-t35}. For a bundle $E$ in $\Bh^s (X_{\C_p})$ we obtain a continuous functor
\[
\rho_E : \Pi_1 (X) \longrightarrow \nVec (\C_p)
\]
by setting on the one hand
\[
\rho_E (x) = E_x \quad \text{for} \; x \in X (\C_p) = \Ob \Pi_1 (X) \; .
\]
On the other hand, for $x , x' \in X (\C_p)$ the continuous map
\[
\rho_E : \Mor_{\Pi_1 (X)} (x,x') \longrightarrow \Hom_{\C_p} (E_x , E_{x'}) 
\]
is given by
\[
\rho_E (\gamma) = \psi^{-1}_{x'} \verk (\rho_{\Eh} (\gamma) \otimes_{\Z} \Q) \verk \psi_x \; .
\]
Here we have chosen a model $\eX$ of $X$ over $\oZ_p$ and a bundle $\Eh$ in $\Bh^s (\eX_{\eo})$ together with an isomorphism $\psi : E \iso j^*_{\eX_{\eo}} \Eh$ of bundles over $X_{\C_p}$. Here $j_{\eX_{\eo}} : X_{\C_p} \to \eX_{\eo}$ is the base extension of the inclusion $j_{\eX} : X \to \eX$. Moreover $\psi_x$ is the map
\[
\psi_x = x^* (\psi) : E_x \longrightarrow (j^*_{\eX_{\eo}} \Eh)_x = \Eh_{x_{\eo}} \otimes_{\eo} \C_p = \Eh_{x_{\eo}} \otimes_{\Z} \Q \; .
\]
For a morphism $f : E \to E'$ in $\Bh^s (X_{\C_p})$ the morphism $\rho_f : \rho_E \to \rho_{E'}$ is given by the family of linear maps $f_x = x^* (f) : E_x \to E'_x$ for all $x \in X (\C_p)$. 

The following result concerns functoriality properties of our parallel transport.

\begin{theorem} \label{cd-t36}
i) Let $X , \tX$ be smooth, complete varieties over $\oQ_p$ and let $f : \tX \to X$ be a morphism over $\oQ_p$. Pullback by $f$ induces a functor $f^* : \Bh^s (X_{\C_p}) \to \Bh^s (\tX_{\C_p})$ and the following diagram commutes:
\[
\xymatrix{
\Bh^s (X_{\C_p}) \ar[r]^-{\rho} \ar[d]_{f^*} & \Rep_{\Pi_1 (X)} (\C_p) \ar[d]^{A (f)} \\
\Bh^s (\tX_{\C_p}) \ar[r]^-{\rho} & \Rep_{\Pi_1 (\tX)} (\C_p) \; .
}
\]
Here $A(f)$ is defined  in the same way as $A (\alpha)$ in Proposition \ref{cd-t31}.\\
ii) For every $\sigma$ in $G_{\Q_p}$ and each vector bundle $E$ in $\Bh^s (X_{\C_p})$ the conjugate bundle ${}^{\sigma}\!E$ is in $\Bh^s (\hsi X_{\C_p})$ and the following diagram commutes where $r = \rank\,E$:
\[
\xymatrix{
\Pi_1 (X) \ar[r]^{\rho_E} \ar[d]^{\wr}_{\sigma_*} & \nVec_r (\C_p) \ar[d]^{\wr \, \sigma_*} \\
\Pi_1 (\hsi X) \ar[r]^{\rho_{\hsi E}} & \nVec_r (\C_p) \; .
}
\]
Here $\nVec_r (\C_p)$ is the category of $r$-dimensional $\C_p$-vector spaces and $\sigma_* $ is given by $- \otimes_{\C_p , \sigma} \C_p$.
\end{theorem}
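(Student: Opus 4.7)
My plan is to reduce both parts to their model-level analogues already established in Section \ref{sec:cd31}, namely Propositions \ref{cd-t29}, \ref{cd-t30} and \ref{cd-t31}. The key point is that by Proposition \ref{cd-t35} the functor $\rho$ on $\Bh^s(X_{\C_p})$ is characterized by the compatibility $\rho \verk j^*_{\eX_{\eo}} = \rho \otimes \Q$ for every model $\eX$ of $X$, so everything comes down to carrying functoriality across this extension step; Theorem \ref{cd-t33} will resolve ambiguities coming from different choices of models.

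For part (i), given $E$ in $\Bh^s(X_{\C_p})$ I would choose a model $\eX$ of $X$ over $\oZ_p$, a bundle $\Eh$ in $\Bh^s(\eX_{\eo})$ and an isomorphism $E \iso j^*_{\eX_{\eo}} \Eh$. Applying Lemma \ref{cd-t9} to $f : \tX \to X$ and to $\eX$ produces a model $\teX$ of $\tX$ together with a proper $\oZ_p$-morphism $\alpha : \teX \to \eX$ extending $f$. By Proposition \ref{cd-t29} the pullback $\alpha^* \Eh$ lies in $\Bh^s(\teX_{\eo})$, and base change to $\C_p$ identifies $f^* E$ with $j^*_{\teX_{\eo}}(\alpha^* \Eh)$; this shows $f^* E \in \Bh^s(\tX_{\C_p})$ and defines $f^*$ on objects. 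To see $f^*$ is well-defined on morphisms and independent of the choices, I would use Theorem \ref{cd-t33} to dominate any two models of $\tX$ by a common model. Commutativity of the diagram then reduces to Proposition \ref{cd-t31}: for each $n$ we have $\rho_n \verk \alpha^* = A(\alpha) \verk \rho_n$; passing to the projective limit over $n$ and tensoring with $\Q$ produces $\rho \verk \alpha^* = A(\alpha) \verk \rho$ on $\Bh^s(\eX_{\eo}) \otimes \Q$, and since $\alpha$ restricts to $f$ on the generic fibre so that $A(\alpha)$ factors through $A(f)$ after passing to $\C_p$-representations, the uniqueness in Proposition \ref{cd-t35} gives $\rho \verk f^* \cong A(f) \verk \rho$.

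For part (ii), conjugation by $\sigma \in G_{\Q_p}$ converts the model $\eX$ into $\hsi \eX$ and the bundle $\Eh$ into $\hsi \Eh$ in $\Bh^s(\hsi \eX_{\eo})$ (its special fibre is the pullback of $\Eh_k$ along the isomorphism $\hsi \eX_k \iso \eX_k$ coming from $\sigma$, which preserves numerical flatness since the residue field is fixed). Conjugation commutes with the open immersion $j_{\eX_{\eo}}$, so $\hsi E \cong j^*_{\hsi\eX_{\eo}}(\hsi\Eh)$ lies in $\Bh^s(\hsi X_{\C_p})$. Proposition \ref{cd-t30} supplies, for each $n$, the commutative diagram
\[
\sigma_* \verk \rho_{\Eh,n} = \rho_{\hsi\Eh,n} \verk \sigma_* \; ,
\]
and taking the projective limit over $n$ and tensoring with $\Q$ yields the desired identity on $\C_p$-valued representations.

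The genuinely new work is modest: the technical obstacle is to ensure that these constructions descend from models to $X_{\C_p}$ in a way that does not depend on auxiliary choices, and this is exactly what Theorem \ref{cd-t33} and the uniqueness clause of Proposition \ref{cd-t35} handle. No new geometric input beyond the already-established machinery is needed.
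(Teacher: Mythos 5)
Your proposal is correct and follows essentially the same route as the paper: the paper's proof is a one-line reduction to the $\eo$-versions of Propositions \ref{cd-t30} and \ref{cd-t31} together with Lemma \ref{cd-t9} and Theorem \ref{cd-t33}, and you simply unpack those references, pass to the projective limit over $n$, tensor with $\Q$, and invoke the uniqueness in Proposition \ref{cd-t35}. One very minor point: since $\alpha : \teX \to \eX$ restricts to $f$ on the generic fibres, $\alpha_* = f_*$ on $\Pi_1(\tX)$, so $A(\alpha)$ and $A(f)$ agree outright rather than one "factoring through" the other.
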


\begin{proof}
i) This follows from the $\eo$-version of Proposition \ref{cd-t31} using Lemma \ref{cd-t9} and Proposition \ref{cd-t33}.\\
ii) This follows from the $\eo$-version of Proposition \ref{cd-t30} again using Proposition \ref{cd-t33}.
\end{proof}

It is obvious that the functor $\rho : \Bh^s (X_{\C_p}) \to \Rep_{\Pi_1 (X)} (\C_p)$ is faithful. For a point $x \in X (\C_p)$ the functor
\[
\Rep_{\Pi_1 (X)} (\C_p) \longrightarrow \Rep_{\pi_1 (X,x)} (\C_p) \; , \; (V_y)_{y \in X (\C_p)} \longmapsto V_x
\]
into the category of continuous finite dimensional $\C_p$-representations of $\pi_1 (X,x)$ is an equivalence of categories as in \cite{May} Ch. 2, \S\,4. In particular, we obtain the following consequence:

\begin{cor} \label{cd-t37}
For a smooth, complete variety $X$ over $\oQ_p$ and any point $x \in X (\C_p)$ the fibre functor $\Bh^s (X_{\C_p}) \to \uVec_{\C_p} , E \mapsto E_x$ is faithful. The evaluation map
\[
\Gamma (X_{\C_p} , E) \longrightarrow E^{\pi_1 (X,x)}_x \; , \; s \longmapsto s (x)
\]
is injective.
\end{cor}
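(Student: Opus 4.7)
Both claims will follow from a common rigidity statement: any morphism between bundles in $\Bh^s(X_{\C_p})$ whose fibre at a single $\C_p$-point vanishes must vanish on all of $X_{\C_p}$. The key inputs are (1) the naturality of $\rho(f)$ as a natural transformation of functors on $\Pi_1(X)$, (2) \'etale path-connectedness of $X(\C_p)$, and (3) Zariski density of $\C_p$-points in $X_{\C_p}$.

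For faithfulness of the fibre functor, I would start with two morphisms $f, g : E \to E'$ in $\Bh^s(X_{\C_p})$ satisfying $f_x = g_x$ and set $h = f - g$. By Proposition \ref{cd-t35}, $\rho(h)$ is a natural transformation $\rho_E \to \rho_{E'}$ whose $y$-component is $h_y$. Naturality along an \'etale path $\gamma$ from $x$ to $y$ gives
\[
\rho_{E'}(\gamma) \circ h_x = h_y \circ \rho_E(\gamma),
\]
so $h_x = 0$ forces $h_y = 0$. Since $X$ is geometrically connected over $\oQ_p$, every pair of $\C_p$-points is joined by an \'etale path (\cite{SGA1} Exp. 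V, \S 7, as already used in Theorem \ref{cd-t24}), hence $h_y = 0$ for all $y \in X(\C_p)$. Combined with Zariski density of $\C_p$-points in the variety $X_{\C_p}$ and local freeness of $E'$, this yields $h = 0$.

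For the evaluation map, I would view a global section $s \in \Gamma(X_{\C_p}, E)$ as a morphism $s : \Oh_{X_{\C_p}} \to E$ in $\Bh^s(X_{\C_p})$. The trivial bundle lies in this category because, for any model $\eX$ of $X$ over $\oZ_p$, the structure sheaf $\Oh_{\eX_\eo}$ has trivial (hence numerically flat) special fibre and satisfies $j^*_{\eX_\eo}\Oh_{\eX_\eo} = \Oh_{X_{\C_p}}$. A brief unwinding of Construction \ref{cd-t26} with any very good trivializing cover $\pi : \Yh \to \eX$, using that $\Gamma(\Yh_n, \Oh) = \eo_n$ by Proposition \ref{cd-t11}, shows that both maps $y_n^*$ and $(\gamma y)_n^*$ are the canonical identification $\eo_n \xrightarrow{\sim} \eo_n$, so $\rho_{\Oh,n}(\gamma) = \id$ for every path $\gamma$ and every $n \ge 1$; passing to the limit, $\rho_{\Oh_{X_{\C_p}}}$ is the constant functor on $\C_p$ with identity morphisms. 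Naturality of $\rho(s)$ then reads $\rho_E(\gamma)(s(x)) = s(y)$ for every path $\gamma$ from $x$ to $y$. Applied to loops at $x$, this shows $s(x) \in E_x^{\pi_1(X,x)}$, so the evaluation map does land in the invariants; applied to arbitrary paths, it shows that $s(x) = 0$ forces $s(y) = 0$ for all $y \in X(\C_p)$, whence $s = 0$ by density.

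The only non-formal step is checking that the parallel transport attached to the structure sheaf is trivial, which is a direct calculation from the definitions of Construction \ref{cd-t26} as sketched above; no real obstacle arises. Everything else is an instance of naturality combined with \'etale path-connectedness and density of closed points on a variety.
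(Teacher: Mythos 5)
Your proof is correct and follows essentially the same route as the paper: the paper deduces faithfulness from the (earlier noted) faithfulness of $\rho$ combined with the equivalence $\Rep_{\Pi_1(X)}(\C_p)\simeq\Rep_{\pi_1(X,x)}(\C_p)$ and then obtains injectivity of evaluation by applying the faithful functor to $\Hom_{X_{\C_p}}(\Oh,E)$, whereas you unwind the same categorical content directly via naturality of $\rho(h)$ and $\rho(s)$, \'etale path-connectedness, and Zariski density of $\C_p$-points. Your explicit verification that $\rho_{\Oh_{X_{\C_p}}}$ is the trivial representation (via Construction~\ref{cd-t26} and $\Gamma(\Yh_n,\Oh)=\eo_n$) is a small detail the paper leaves tacit, but it is exactly what is needed to identify $\Hom_{\pi_1(X,x)}(\C_p,E_x)$ with $E_x^{\pi_1(X,x)}$.
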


\begin{proof}
The first assertion is clear. For the second, note that the trivial line bundle $\Oh_{X_{\C_p}}$ is in $\Bh^s (X_{\C_p})$. Hence the map 
\[
\Gamma (X_{\C_p} , E) = \Hom_{X_{\C_p}} (\Oh , E) \longrightarrow \Hom_{\pi_1 (X,x)} (\C_p , E_x) = E^{\pi_1 (X,x)}_x
\]
is injective. 

Given a vector bundle $E$ in $\Bh^s (X_{\C_p})$ and $s_x \in E^{\pi_1 (X,x)}_x$ we can use the parallel transport $\rho_E$ to construct a set theoretical section $s : X (\C_p) \to E (\C_p)$ with $s (x) = s_x$ as follows. For $y \in X (\C_p)$ choose an \'etale path $\gamma_y$ from $x$ to $y$ and set $s (y) = \rho_E (\gamma_y) (s_x)$. Since $s_x$ is $\pi_1 (X,x)$-invariant the value of $s (y)$ is independent of the choice of $\gamma_y$. Applied to the bundle $E = \uHom (F,G)$ for bundles $F,G$ in $\Bh^s (X_{\C_p})$ it follows that to every equivariant homomorphism $\varphi_x \in \Hom_{\pi_1 (X,x)} (F_x ,G_x)$ we obtain a family of $\C_p$-linear maps $\varphi_y : F_y \to G_y$ for $y \in X (\C_p)$. It would be nice to have a direct argument that $s$ is algebraic resp. that the $\varphi_y$'s are the fibres of a morphism $\varphi : F \to G$ of vector bundles.  This seems to require $p$-adic Hodge theory as in  \cite{Fa2}  with the consequence that 
\[
\Gamma (X_{\C_p} , E) \iso E^{\pi_1 (X,x)}_x
\]
and more generally
\[
\Hom_{X_{\C_p}} (F,G) \iso \Hom_{\pi_1 (X,x)} (F_x , G_x)
\]
are isomorphisms. The latter assertion means that the functor
\[
\Bh^s (X_{\C_p}) \longrightarrow \Rep_{\pi_1 (X,x)} (\C_p)
\]
is fully faithful.
\end{proof}

We end this section with the following theorem which shows that numerically flat reduction implies numerical flatness.

\begin{theorem} \label{cd-t38}
The bundles $E$ in $\Bh^s (X_{\C_p})$ are numerically flat on $X_{\C_p}$.
\end{theorem}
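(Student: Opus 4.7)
The plan is to verify Nori-semistability of $E$ directly: for every morphism $f \colon C \to X_{\C_p}$ from a smooth projective connected curve $C$ over $\C_p$, we show that $f^\ast E$ is semistable of degree zero on $C$. Fix a representative $\Eh \in \Bh^s(\eX_{\eo})$ of $E$. Applying Lemma \ref{cd-t9} with $R = \eo$ (a filtered union of discrete valuation rings), combined with Lemma \ref{cd-t13} and a two-dimensional resolution of singularities after descent to a finite extension of $\Q_p$, extend $f$ to a proper morphism $\tilde f \colon \Ch \to \eX_{\eo}$ where $\Ch$ is a regular, very good model of $C$ over $\eo$. Set $\tilde V = \tilde f^\ast \Eh$. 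Since numerical flatness is preserved by arbitrary pullback to complete schemes, $\tilde V_k = \tilde f_k^\ast \Eh_k$ is numerically flat on the complete connected $k$-scheme $\Ch_k$.

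For $\deg f^\ast E = 0$, apply Theorem \ref{cd-t3} to $\tilde V_k$ on $\Ch_k$ to obtain $\chi(\Ch_k, \tilde V_k) = r\,\chi(\Ch_k, \Oh)$, where $r = \rk E$. Flat proper base change for $\Ch \to \Spec \eo$ makes both $\chi(\Ch_s, \tilde V_s)$ and $\chi(\Ch_s, \Oh)$ constant in $s$, so $\chi(C, f^\ast E) = r\,\chi(C, \Oh_C)$; Riemann--Roch on the smooth curve $C$ then forces $\deg f^\ast E = 0$.

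For semistability, suppose for contradiction that $W \subset f^\ast E$ is a rank-$s$ subbundle with $\deg W > 0$. The inclusion $\det W \hookrightarrow (\wedge^s \tilde V)|_C$ extends, via schematic closure plus saturation, to a subsheaf $\bar L \subset \wedge^s \tilde V$ such that $(\wedge^s \tilde V)/\bar L$ is $\eo$-flat; this forces $\bar L_k \hookrightarrow (\wedge^s \tilde V)_k$ to be injective. Since $\Ch$ is regular of dimension two, the rank-one reflexive sheaf $\bar L$ is an honest line bundle, and local constancy of the degree of a line bundle on fibers of a proper flat family with geometrically reduced fibers gives $\deg_{\Ch_k}(\bar L_k) = \deg_C(\det W) = \deg W > 0$.

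On the other hand, $\wedge^s \tilde V_k$ is numerically flat on $\Ch_k$ by Proposition \ref{cd-t20}. Let $\nu \colon \tilde{\Ch}_k \to \Ch_k$ be the normalization of the reduced connected curve $\Ch_k$. Then $\nu^\ast(\wedge^s \tilde V_k)$ is numerically flat on the smooth (possibly reducible) curve $\tilde{\Ch}_k$; the map $\nu^\ast \bar L_k \to \nu^\ast(\wedge^s \tilde V_k)$ is automatically injective (a nonzero map from a line bundle on a smooth curve has trivial kernel), and restricting to each irreducible smooth projective component shows that the line sub-bundle $\nu^\ast \bar L_k$ has non-positive degree on every component, so $\deg_{\tilde{\Ch}_k}(\nu^\ast \bar L_k) \le 0$. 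A projection formula applied to the exact sequence $0 \to \Oh_{\Ch_k} \to \nu_\ast \Oh_{\tilde{\Ch}_k} \to \delta \to 0$ (with $\delta$ torsion supported on the non-normal locus) yields the identity $\deg_{\Ch_k}(\bar L_k) = \deg_{\tilde{\Ch}_k}(\nu^\ast \bar L_k) \le 0$, contradicting $\deg \bar L_k > 0$. The main obstacle is arranging the regular total space $\Ch$ over the non-discrete valuation ring $\eo$ together with the flatness of $\wedge^s \tilde V/\bar L$; both points are handled by descending all data to a finite extension of $\Q_p$, where the standard two-dimensional resolution and saturation results apply.
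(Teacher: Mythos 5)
Your overall strategy---reduce to morphisms from smooth projective curves and compare degrees on the special fibre---is sound, and the arithmetic in the semistability step (degrees under normalization, injectivity after restriction to components) is correctly argued. But the construction of the model $\Ch$ contains a genuine gap that is, in fact, the crux of the theorem.

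You write that the regularity of $\Ch$ and the flatness of $\wedge^s\tilde V/\bar L$ are ``handled by descending all data to a finite extension of $\Q_p$.'' This descent is impossible. The curve $C$ and the morphism $f\colon C\to X_{\C_p}$ are arbitrary over $\C_p$, and $\C_p$ is much larger than $\oQ_p$: a given $C$ and $f$ will typically descend only to a \emph{finitely generated} extension of $\Q_p$, not to a finite one. Consequently you cannot invoke two-dimensional resolution over $\eo_K$ and then base-change; the best you can do is spread the data out over a finitely generated normal $\eo_K$-subalgebra $A\subset\eo$. The resulting total space is high-dimensional over a Noetherian base, and the saturation and local-factoriality arguments you rely on are no longer the ``standard two-dimensional'' ones. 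Relatedly, Lemma~\ref{cd-t9} is proved under the Section~\ref{sec:cd2} hypothesis that $R$ is a filtered union of discrete valuation rings \emph{dominated by} $R$; this holds for $\oZ_p$, but $\eo=\eo_{\C_p}$ strictly contains $\oZ_p$ and the hypothesis is not automatic, so applying it ``with $R=\eo$'' needs justification.

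The paper closes this gap differently. It first reduces, via $\Bh^s(\eX_\eo)=\Bh_p(\eX_\eo)$ (Theorems~\ref{cd-t22} and~\ref{cd-t23}) and surjectivity of the trivialising cover, to the case where $\Eh\bmod p$ is \emph{trivial}. It then spreads out over a finitely generated normal $\eo_K$-subalgebra $A$ of $\eo$, chooses a height-one prime $\ep$ of $A$ containing a uniformiser $\pi_K$, and passes to the strict Henselisation $R$ of $A_\ep$ --- a discrete valuation ring inside $\C_p$. Over $R$ one obtains a genuine two-dimensional model of a curve with $\Fh$ trivial on the special fibre, and the conclusion follows directly from the curve case, \cite{dewe1} Proposition~14. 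Working with \emph{triviality} of the special fibre rather than numerical flatness is essential in this step, because the residue field $\kappa$ of $R$ is typically a transcendental extension of $\oF_p$; Theorem~\ref{cd-t3}, which you use to derive $\chi(\Ch_k,\tilde V_k)=r\chi(\Ch_k,\Oh)$, is proved only for $k=\F_q$ or $k=\oF_p$ and so would not apply over $\kappa$. (Your degree step can in fact be salvaged without Theorem~\ref{cd-t3}, since a numerically flat line bundle $\det\tilde V_k$ on a projective curve has degree zero over any field and Euler characteristics are locally constant in flat proper families, but the subsequent semistability step still needs a regular two-dimensional model, hence still needs the $A$-plus-DVR reduction.)

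So the idea of directly comparing degrees of line subsheaves, rather than citing the curve result, is a genuinely different and arguably more self-contained route to the theorem, but as written it rests on a descent that does not exist. Repairing it requires precisely the spreading-out-to-$A$ and localisation-to-$R$ mechanism of the paper; once you have that, you would essentially be re-proving \cite{dewe1} Proposition~14 inline.
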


\begin{rem}
If $X$ is a smooth projective variety over $\oQ_p$ the numerically flat bundles on $X$ are the ones with vanishing rational Chern classes which are semistable with respect to all (equivalently one) polarization, see \cite{S} Corollary 3.10 and Remark and \cite{DPS}, Theorem 1.18 and Corollary 1.19.
\end{rem}

\begin{proof}
By definition there are a model $\eX$ over $\oZ_p$ of $X$ and a vector bundle $\Eh$ in $\Bh^s (\eX_{\eo})$ with $\Eh \otimes_{\eo} \C_p = E$. Since $\Bh^s (\eX_{\eo}) = \Bh_p (\eX_{\eo})$ by Theorems \ref{cd-t22} and \ref{cd-t23} there exists a good trivializing cover $\pi : \Yh \to \eX$ for $\Eh_1 = \Eh \mbox{ mod }p$ over $(\eX,U)$ for some open smooth quasi-projective subscheme $U$ of $X$. Since $\pi_{\C_p} : Y_{\C_p} \to X_{\C_p}$ is proper and dominant, it  is surjective. Hence it suffices to show that $\pi^*_{\C_p} E$ is numerically flat on $Y_{\C_p}$. Replacing $X$ by $Y$ etc. we may therefore assume in addition that $\eX / \oZ_p$ is a good model and that $\Eh_1$ is a trivial bundle on $\eX_1 = \eX \otimes_{\oZ_p} \oZ_p / p = \eX_{\eo} \otimes_{\eo} \eo / p$. Let $\alpha : Z \to X_{\C_p}$ be a morphism from a smooth projective curve $Z$ over $\C_p$ to $X_{\C_p}$. We have to show that $\alpha^* E$ is semistable of degree zero. Let $K / \Q_p$ be a finite extension field in $\oQ_p$ such that $\eX$ is defined over $\eo_K$,  i.e. $\eX = \eX_{\eo_K} \otimes_{\eo_K} K$ for a proper flat $\eo_K$-scheme $\eX$ (with geometrically reduced special fibre). Since $\eo$ is the filtered union of finitely generated normal $\eo_K$-subalgebras $A$ we can spread out the situation over some such $A$  using finite presentation. To be precise, there exist:\\
i) a vector bundle $\Eh_A$ over $\eX_A = \eX_{\eo_K} \otimes A$ with $\Eh_A \otimes_A \eo = \Eh$ and such that $\Eh_A \otimes A_1$ is a trivial bundle on $\eX_{A_1} = \eX_A \otimes_A A_1$ where $A_1 = A/pA$, and \\
ii) an $A_K$-morphism $\alpha_{A_K} : Z_{A_K} \to X_{A_K} = \eX_A \otimes_A A_K = X_K \otimes_K A_K$ where $A_K = A \otimes_{\eo_K} K$ and $X_K = \eX_{\eo_K} \otimes_{\eo_K} K$ such that\\
a) $Z_{A_K} \otimes_{A_K} \C_p = Z$ and \\
b) $\alpha_{A_K} \otimes_{A_K} \C_p = \alpha : Z \to X_{\C_p}$.\\
Taking the closure $\Zh_A$ of $Z_{A_K}$ in $\eX_A$ with its reduced structure we obtain an $A$-morphism $\alpha_A : \Zh_A \to \eX_A$ from a proper $A$-scheme $\Zh_A$ such that $\alpha_A \otimes_A A_K = \alpha_{A_K}$. The vector bundle $\Fh_A = \alpha^*_A \Eh_A$ is trivial modulo $p$. 

Choose a prime ideal $\ep$ of height one in $A$ containing a prime element $\pi_K$ of $\eo_K$. In fact, any prime ideal $\ep$ in $A$ corresponding to the generic point of an irreducible component of $\spec A \otimes \eo_K / \pi_K \eo_K$ (which is non-empty since $A \subset \eo$) will do. Since $A$ is normal $A_{\ep} \subset \C_p$ is a discrete valuation ring. Moreover $\ep \supset \pi_K A \supset pA$. Let $R$ be the strict Henselization of $A_{\ep}$ in the algebraic closure of $Q (A) = \Quot (A)$ in $\C_p$. Then $R$ is a discrete valuation ring with quotient field $Q \subset \C_p$ and separably closed residue field $\kappa \supset \eo_K / \pi_K \eo_K$. Let $(\Zh , \Fh)$ be the base extension of $(\Zh_A , \Fh_A)$ via $A \subset R$. The restriction of $\Fh$ to the special fibre of $\Zh$ is trivial since $\Fh_A \otimes_A A_1$ is trivial and $A \subset R$ induces a map $A_1 \to R / p \to \kappa$. The generic fibre $\Zh_Q = \Zh \otimes_R Q$ is a smooth projective curve over $Q$ since $\Zh_Q \otimes_Q \C_p = Z_{A_K} \otimes_{A_K} \C_p = Z$ is a smooth projective curve. Possibly replacing $\Zh$ by the closure of $\Zh_Q$ in $\Zh$ we may assume that $\Zh$ is integral and flat over $R$. Then $\Zh$ is a model of $\Zh_Q$ over $R$. We can now apply \cite{dewe1} Proposition 14 and obtain that $\Fh_Q = \Fh \otimes_R Q$ is semistable of degree zero on $\Zh_Q$. It follows that $\Fh_Q \otimes_Q \C_p = \alpha^* E$ is semistable of degree zero on $Z$ as was to be shown.
\end{proof}

\begin{rem}
Theorem \ref{cd-t23} which requires some work could  easily have been avoided in the above proof since the simple descent argument in the beginning of section \ref{sec:cd5} shows that any bundle $\Eh$ in $\Bh^s (\eX_{\eo}) = \Bh_\emm (\eX_{\eo})$ lies in $\Bh_t (\eX_{\eo})$ for some $t \in \oZ_p$ with $|t|< 1$. Instead of working with $\Eh_1 = \Eh \mbox{ mod }p$ we would then work with $\Eh \mbox{ mod } t$. 
\end{rem}
\section{Line bundles and the category $\Bh^{\sharp} (X_{\C_p})$} \label{sec:cd8}

In this section we will enlarge the class of vector bundles for which parallel transport can be constructed. The resulting category $\Bh^{\sharp} (X_{\C_p})$ contains in particular all numerically flat line bundles on $X_{\C_p}$ i.e. all line bundles in $\Pic^{\tau} (X_{\C_p})$ if $X$ is projective. 
For $\Bh^s (X_{\C_p})$ itself this is not known in general.

\begin{defn} \label{cd:t8.1}
For a smooth complete variety $X$ over $\oQ_p$ let $\Bh^{\sharp} (X_{\C_p})$ be the full subcategory of vector bundles $E$ on $X_{\C_p}$ with the following property: For each closed point $x$ of $X$ there is a proper morphism $\alpha : X' \to X$ from a smooth complete variety $X'$ to $X$ such that:\\
i) $\alpha$ is finite \'etale surjective over an open neighborhood $U$ of $x$, and \\
ii) $\alpha^* E$ is in $\Bh^s (X'_{\C_p})$. \\
\end{defn}

As a special case of the definition note that a vector bundle $E$ is in $\Bh^{\sharp} (X_{\C_p})$ if $\alpha^* E$ is in $\Bh^s (X'_{\C_p})$ for some finite \'etale surjective morphism $\alpha : X' \to X$ from a smooth, complete variety $X'$ to $X$. 

By definition $\Bh^s (X_{\C_p})$ is a full subcategory of $\Bh^{\sharp} (X_{\C_p})$. The new category $\Bh^{\sharp}$ inherits all the good properties of $\Bh^s$ and contains all relevant line bundles:

\begin{theorem}
\label{cd:t8.2}
The additive $\C_p$-linear category $\Bh^{\sharp} (X_{\C_p})$ is stable under extensions, tensor products, duals, internal homs and exterior powers. Moreover it contains all numerically flat line bundles on $X_{\C_p}$. For a morphism $f : \tX \to X$ of smooth complete $\oQ_p$-varieties, pullback by $f$ induces a functor $f^* : \Bh^{\sharp} (X_{\C_p}) \to \Bh^{\sharp} (\tX_{\C_p})$. For $\sigma \in G_{\Q_p}$ and $E$ in $\Bh^{\sharp} (X_{\C_p})$ the conjugate bundle $^{\sigma}\!E$ is in $\Bh^{\sharp} (\; ^{\sigma}\!X_{\C_p})$.
\end{theorem}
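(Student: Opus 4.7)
The statement splits into three tasks: (i) the closure properties (extensions, tensor products, duals, internal homs, exterior powers); (ii) functoriality under morphisms $f:\tX\to X$ and under the Galois action; and (iii) the inclusion of all numerically flat line bundles. The strategy for (i) and (ii) is to convert the witness-$\alpha$ given by the definition of $\Bh^{\sharp}$ into a common witness by a fibre-product-plus-resolution procedure, and then invoke the analogous stability statements already established for $\Bh^s$ in Corollary \ref{cd-t34} and Theorem \ref{cd-t36}(i).

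Concretely, given $E_1,E_2\in\Bh^{\sharp}(X_{\C_p})$ and a closed point $x\in X$, I pick covers $\alpha_i:X'_i\to X$ as in the definition, finite \'etale over open neighbourhoods $U_i\ni x$ with $\alpha_i^\ast E_i\in\Bh^s(X'_{i,\C_p})$. Choose a preimage of $x$ in $X'_1\times_X X'_2$ and let $V$ be the connected component containing it of the finite \'etale locus $\alpha_1^{-1}(U_1\cap U_2)\times_{U_1\cap U_2}\alpha_2^{-1}(U_1\cap U_2)$. Take the reduced closure $\overline V$ of $V$ in $X'_1\times_X X'_2$ and, using Hironaka's theorem in characteristic zero, resolve $\overline V$ to a smooth projective variety $X'$ that is birational to $\overline V$ and isomorphic to $V$ above $V$. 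Then $\alpha:X'\to X$ is finite \'etale around $x$, and the projections give morphisms $X'\to X'_i$, so by Theorem \ref{cd-t36}(i) each $\alpha^\ast E_i\in\Bh^s(X'_{\C_p})$. Closure of $\Bh^{\sharp}$ under $\otimes$, duals, $\uHom$, $\Lambda^i$ and extensions then follows immediately from Corollary \ref{cd-t34} on $X'$ (for extensions, one checks that the pulled-back extension of $\alpha^\ast E_2$ by $\alpha^\ast E_1$ is a vector bundle extension locally and lies in $\Bh^s(X'_{\C_p})$). The same fibre-product-plus-resolution applied to $\tX\times_X X'$ above a preimage of $\tilde x\in\tX$ of $f(\tilde x)$ witnesses $f^\ast E\in\Bh^{\sharp}(\tX_{\C_p})$, and Galois equivariance is immediate from applying $\sigma\in G_{\Q_p}$ to the cover $\alpha$ and to the bundle.

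The main obstacle is (iii). Let $L\in\Pic^{\tau}(X_{\C_p})$; I need, for each closed point $x\in X$, a cover $\alpha$ as above with $\alpha^\ast L\in\Bh^s(X'_{\C_p})$. My plan handles the good reduction case first: if $\eX$ is a smooth proper model of $X$ over $\oZ_p$, then the relative Picard functor $\Pic^{\tau}_{\eX/\oZ_p}$ is represented by a proper $\oZ_p$-scheme, namely a finite disjoint union of translates of the abelian scheme $\Pic^0_{\eX/\oZ_p}$, so the valuative criterion of properness extends the $\C_p$-point $[L]$ of $\Pic^{\tau}_{X/\oQ_p}$ to an $\eo$-point of $\Pic^{\tau}_{\eX/\oZ_p}$. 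This yields a line bundle $\Lh$ on $\eX_{\eo}$ with $\Lh|_{X_{\C_p}}=L$ and $\Lh_k\in\Pic^{\tau}(\eX_k)$, hence $\Lh_k$ numerically flat, showing $L\in\Bh^s(X_{\C_p})\subset\Bh^{\sharp}(X_{\C_p})$ in this case. For general $X$, apply de Jong's alterations theorem to produce $\alpha_0:X'_0\to X$ with $X'_0$ smooth projective admitting good reduction (after further Hironaka resolution of the semistable model) over some finite extension of $\oZ_p$; by the good-reduction case already established, $\alpha_0^\ast L\in\Bh^s(X'_{0,\C_p})$.

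The remaining delicate step, which is the principal technical difficulty, is to refine $\alpha_0$ into an $\alpha$ that is additionally finite \'etale around a prescribed closed point $x\in X$. I would achieve this by combining de Jong's alteration with the Bhatt--Snowdon improvement supplied by Theorem \ref{bhatt}, in the manner of Section \ref{sec:cd6}: one starts from an integral $\oZ_p$-model dominating the situation and uses Theorem \ref{bhatt} to replace the alteration by a further cover which is finite \'etale around $x$ while still factoring through $\alpha_0$ generically, so that the $\Bh^s$ property of $\alpha_0^\ast L$ transfers to the new pullback via Theorem \ref{cd-t36}(i). Granting this refinement, one obtains $\alpha^\ast L\in\Bh^s(X'_{\C_p})$ and hence $L\in\Bh^{\sharp}(X_{\C_p})$.
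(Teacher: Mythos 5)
Your treatment of the closure properties and of functoriality is essentially correct and matches the paper's sketch: the fibre-product-plus-resolution procedure to produce a common witness $\alpha$ for finitely many bundles is exactly the "argument similar to Lemma \ref{cd-t17}" the paper alludes to, after which Corollary \ref{cd-t34} and Theorem \ref{cd-t36}(i) take over. Your appeal to Hironaka in characteristic zero to smooth out the reduced closure, and to the observation that $U_1\cap U_2$ may be shrunk to a connected neighbourhood of $x$, are fine.

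The argument for part (iii) has a genuine gap. Your plan hinges on reducing to the case where $X$ admits a smooth projective model with geometrically integral fibres, so that $\Pic^{\tau}_{\eX/\eo_L}$ is proper and the valuative criterion extends $[L]$ to a line bundle with numerically flat reduction. The good-reduction case itself is fine. But the claimed reduction, namely that "de Jong's alterations theorem ... produce $\alpha_0:X'_0\to X$ with $X'_0$ smooth projective admitting good reduction (after further Hironaka resolution of the semistable model)", is false. De Jong gives a \emph{semistable} model after an alteration, not a smooth one, and resolving the singularities of a semistable model does not promote it to good reduction; a curve of genus $\ge 2$ with genuinely bad (not potentially good) reduction cannot acquire good reduction after any alteration, since its Jacobian would then be dominated by, hence isogenous to a factor of, an abelian variety with good reduction. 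Thus the intermediate variety $X'_0$ on which your good-reduction case would apply need not exist. Nor does Theorem \ref{bhatt} rescue this: it makes pullback on coherent cohomology $p$-divisible and can be arranged to be finite \'etale around a prescribed finite set, but it offers no control over the reduction type of the cover and cannot force good reduction. The paper's proof therefore takes a genuinely different route: it uses Chow's lemma to make $X$ projective, passes to the Albanese variety $A$ (where $f^*:\Pic^0(A_{\C_p})\iso\Pic^0(X_{\C_p})$), exploits representability and local finite presentation of $\Pic^{\tau}$ of a very good model of $A$ together with Coleman's torsion result to extend a sufficiently high power $Q=\tQ^b$ to a line bundle with numerically flat reduction (so $Q\in\Bh^s(A_{\C_p})$ and $L^N=f^*Q$), and then constructs the finite \'etale cover $\pi'':X''\to X$ needed for membership in $\Bh^\sharp$ by base-changing along multiplication by $N$ on $A$ and killing the resulting $N$-torsion twist $P$ with a Kummer $\mu_N$-torsor. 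This circumvents the impossible reduction to good reduction that your argument relies on.
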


\begin{proof}
Given finitely many bundles $E_1 , \ldots , E_n$ in $\Bh^{\sharp} (X_{\C_p})$ and a closed point $x$ of $X$ the morphism $\alpha : X' \to X$ in Definition \ref{cd:t8.1} can be chosen such that besides i) the bundles $\alpha^* E_1 , \ldots , \alpha^* E_n$ are in $\Bh^s (X'_{\C_p})$. This follows by a similar argument as in the proof of Lemma \ref{cd-t17}. Now, all assertions except the one about line bundles follow from Corollary \ref{cd-t34} and the functorialities of $\Bh^s$ in Theorem \ref{cd-t36}. For the assertion about line bundles, using Chow's lemma we may assume that $X$ is a smooth projective variety over $\oQ_p$. Let $A = \Alb_X = \widehat{\Pic^0_X}$ be the Albanese variety of $X$. It is an abelian variety over $\oQ_p$. The choice of a point $x \in X (\oQ_p)$ determines a morphism $f : X \to A$ with $f (x) = 0$ such that pullback along $f$ induces an isomorphism
\[
f^* : \Pic^0 (A_{\C_p}) \iso \Pic^0 (X_{\C_p}) \; .
\]
Descend $A$ to an abelian variety $A_K$ over a finite extension $K$ of $\Q_p$, choose a model $\Zh_{\eo_K}$ over $\eo_K$ of $A_K$ and set $\Zh = \Zh_{\eo_K} \otimes_{\eo_K} \eo_{\oK}$. By Lemma \ref{cd-t13} applied to the proper pair $(\Zh,A)$ there are a finite extension $L$ of $K$ and a very good normal model $\Yh_{\eo_L}$ over $\eo_L$ of an abelian variety $A_L$ over $L$ with $A = A_L \otimes_L \oQ_p$. By \cite{ray}, Corollaire (6.4.5), the group functor $\Pic^{\tau}_{\Yh_{\eo_L} / \eo_L}$ is representable by a separable group scheme. It is locally of finite presentation over $\eo_L$ by \cite{ray}, Th\'eor\`eme (1.5.1) and its generic fibre is $\Pic^0_{A_L/L}$. Hence the subgroup $\Pic^{\tau}_{\Yh_{\eo_L}/\eo_L} (\eo)$ is open in $\Pic^0_{A_L / L} (\C_p)$. By \cite{co}, Theorem 4.1, the cokernel is torsion. For any bundle $\tQ \in \Pic^0 (A_{\C_p})$ some power $\tQ^b$ therefore extends to a line bundle $\Mh$ on $\Yh = \Yh_{\eo_L} \otimes_{\eo_L} \eo_{\oK}$ with $\Mh_k \in \Pic^{\tau}_{\Yh_{\kappa} / \kappa} (k)$ where $\kappa$ is the residue field of $\eo_L$. Enlarging $b$  if necessary we may assume that $\Mh_k \in \Pic^0_{\Yh_{\kappa} / \kappa} (k) = \Pic^0 (\Yh_k)$. Hence the line bundle $\Mh_k$ is Nori-semistable on $\Yh_k$ and therefore $\Mh \in \Bh^s (\Yh_{\eo})$. It follows that for any $\tQ$ in $\Pic^0 (A_{\C_p})$ some power $\tQ^b$ is in $\Bh^s (A_{\C_p})$. With these preparations in place let $L$ be a numerically flat line bundle on $X_{\C_p}$. Then some power $L^a$ is in $\Pic^0 (X_{\C_p})$ and hence of the form $L^a = f^* (\tQ)$ for some $\tQ$ in $\Pic^0 (A_{\C_p})$. Choose $b$ as above such that $Q := \tQ^b$ is in $\Bh^s (A_{\C_p})$ and set $N = ab$. Then we have
\[
L^N = f^* (Q) \; .
\]
Define the smooth complete variety $X'$ by the cartesian diagram
\[
\xymatrix{
X' \ar[r]^{f'} \ar[d]_{\pi} & A \ar[d]^N \\
X \ar[r]^f & A \; .
}
\]
Then $\pi$ is finite \'etale, and since $Q \in \Pic^0 (A_{\C_p})$ we find
\[
\pi^* (L)^N = \pi^* f^* (Q) = f^{'*} N^* (Q) = f^{'*} (Q)^N \; .
\]
Hence we have
\[
\pi^* (L) = f^{'*} (Q) \otimes P 
\]
for some line bundle $P$ on $X'_{\C_p}$ with $P^N \cong \Oh$. By Kummer theory we have
\[
\Pic (X'_{\C_p})_N \iso H^1_{\et} (X'_{\C_p} , \mu_N) \cong H^1_{\et} (X' , \mu_N) \; .
\]
Let $X''$ be a connected component of the \'etale $\mu_N$-torsor over $X'$ corresponding to $P$. Then pullback by the resulting finite \'etale morphism $\pi' : X'' \to X'$ trivialises $P$. Setting $\pi'' = \pi \verk \pi'$ it therefore follows that
\[
\pi^{''*} (L) = \pi^{'*} f^{'*} (Q) \otimes \pi^{'*} P = \pi^{'*} f^{'*} (Q)  \; .
\]
Since $Q$ is in $\Bh^s (A_{\C_p})$ it follows from the functoriality of the categories $\Bh^s$ that $\pi^{''*} (L)$ is in $\Bh^s (X''_{\C_p})$. Since $\pi'' : X'' \to X$ is finite \'etale surjective it follows from the definition of $\Bh^{\sharp}$ that $L$ is in $\Bh^{\sharp} (X_{\C_p})$.

It remains to construct the parallel transport also  for the bundles $E$ in $\Bh^{\sharp} (X_{\C_p})$. Given $E$ there exist an open covering $\eU$ of $X$ and for each member $U$ of the covering $\eU$ a proper morphism $\alpha = \alpha_U : X' \to X$ from a smooth complete variety $X' = X'_U$ over $\oQ_p$ such that $\alpha^{-1} (U) \to U$ is finite \'etale surjective and $\alpha^* E$ is in $\Bh^s (X'_{\C_p})$. We may assume that $\eU$ is stable under finite intersections. Using the same arguments as in the proof of Lemma \ref{cd-t18} we may also assume that a finite group $G = G_U$ acts on $X'$ over $X$ such that $\alpha^{-1} (U) \to U$ is Galois with group $G$. The theory of the parallel transport for bundles in $\Bh^s (X'_{\C_p})$ gives representations attached to $\alpha^* E$:
\[
\rho_{\alpha^* E} : \Pi_1 (X') \longrightarrow \nVec_{\C_p} \; .
\]
There is a transitive system of isomorphism
\begin{equation}
\label{eq:cd16}
\rho_{\alpha^* E} \verk \sigma_* = \rho_{\sigma^* \alpha^* E} = \rho_{\alpha^* E}
\end{equation}
since $\alpha \verk \sigma = \alpha$. The inclusion $\alpha^{-1} (U) \subset X'$ induces a functor
\[
\Pi_1 (\alpha^{-1} (U)) \longrightarrow \Pi_1 (X') \; .
\]
Composition with $\rho_{\alpha^* E}$ induces functors
\[
\rho^U_{\alpha^* E} : \Pi_1 (\alpha^{-1} (U)) \longrightarrow  \nVec_{\C_p} \; ,
\]
for which the analogue of \eqref{eq:cd16} holds. Since the finite \'etale covering $\alpha^{-1} (U) \to U$ is Galois with group $G$, it follows from \cite{dewe1} Proposition 31 that there is a unique continuous functor
\[
\rho^U_E : \Pi_1 (U) \to \nVec_{\C_p}
\]
such that
\[
\rho^U_{\alpha^* E} = \rho^U_E \verk \alpha_* : \Pi_1 (\alpha^{-1} (U)) \longrightarrow \nVec_{\C_p} \; . 
\]
Using functoriality of parallel transport for $\Bh^s$ and the uniqueness assertion in \cite{dewe1} Proposition 31 one can show that the functors $\rho^U_E$ are compatible with inclusions $V \hookrightarrow U$ in $\eU$. Using Theorem \ref{cd-t24} it follows that they are all induced by a unique continuous functor
\begin{equation}
\label{eq:cd17}
\rho_E : \Pi_1 (X) \longrightarrow \nVec_{\C_p} \; .
\end{equation}
\end{proof}

\begin{theorem}
\label{cd:t8.3}
The parallel transport \eqref{eq:cd17} for bundles $E$ in $\Bh^{\sharp} (X_{\C_p})$ is independent of all choices in its construction. The resulting $\C_p$-linear functor
\[
\rho : \Bh^{\sharp} (X_{\C_p}) \longrightarrow \Rep_{\Pi_1 (X)} (\C_p)
\]
is exact and respects tensor products, duals, internal homs and exterior powers. Theorem \ref{cd-t36} and Corollary \ref{cd-t37} (with the same proof) continue to hold for $\Bh^{\sharp}$ instead of $\Bh^s$. 
\end{theorem}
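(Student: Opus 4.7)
The plan is to verify each clause of the theorem by reducing, via the witnesses $\alpha \colon X' \to X$ that define membership in $\Bh^{\sharp}(X_{\C_p})$, to the corresponding statement for $\Bh^s$ already established in Proposition \ref{cd-t35}, Theorem \ref{cd-t36} and Corollary \ref{cd-t37}, and then to invoke the Galois-descent uniqueness of \cite{dewe1} Proposition 31 together with the uniqueness in the Seifert--van Kampen Theorem \ref{cd-t24}(ii).

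\textbf{Independence of choices.} Suppose we have two data sets $(\eU^{(j)}, \alpha_U^{(j)}, G_U^{(j)})$ for $j=1,2$ defining $\rho_E$. After passing to a common intersection-stable refinement of the two coverings, fix a member $U$; we must show that the two induced local functors $\rho^{U,(j)}_E \colon \Pi_1(U) \to \nVec_{\C_p}$ coincide. Using the constructions of Lemmas \ref{cd-t17} and \ref{cd-t18}, mimicking the argument at the end of the proof of Theorem \ref{cd:t8.2}, we dominate both $\alpha_U^{(1)}$ and $\alpha_U^{(2)}$ by a third proper morphism $\beta \colon X'' \to X$ from a smooth complete variety, with $\beta^{-1}(U) \to U$ Galois of group $H$ and $\beta^* E \in \Bh^s(X''_{\C_p})$. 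Writing $\beta = \alpha_U^{(j)} \verk \gamma^{(j)}$ and applying Theorem \ref{cd-t36}(i) to $\Bh^s$, the functor $\rho_{\beta^* E}$ descends along $H$ to the same functor that both $\rho_{\alpha_U^{(j)*}E}$ descend to along $G_U^{(j)}$; by the uniqueness clause of \cite{dewe1} Proposition 31 this common descent is $\rho^{U,(1)}_E = \rho^{U,(2)}_E$. The uniqueness in Theorem \ref{cd-t24}(ii) then propagates the equality to the global functor $\rho_E$ on $\Pi_1(X)$.

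\textbf{Exactness and tensor operations.} Given finitely many bundles $E_1,\ldots,E_n$ in $\Bh^{\sharp}(X_{\C_p})$, a short exact sequence of such bundles, or a $\otimes$-theoretic operation, we can select a single family of witnesses $\alpha_U \colon X'_U \to X$ adapted to all of them simultaneously: fiber-product the individual witnesses for each $E_i$, take a connected component dominating $U$, and then form a Galois closure via Lemma \ref{cd-t18}. Because the assertions in question (stability and compatibility of $\rho$ with $\otimes$, duals, internal homs, exterior powers, and exactness) are known on $\Bh^s(X'_{U,\C_p})$ by Proposition \ref{cd-t35}, and because Galois descent of finite-dimensional continuous $\C_p$-representations is an exact $\C_p$-linear tensor functor, all the desired structural properties transfer to $\Bh^{\sharp}(X_{\C_p})$. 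This gluing via Theorem \ref{cd-t24} also confirms that $\rho$ as defined in \eqref{eq:cd17} is $\C_p$-linear and exact.

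\textbf{Functoriality, Galois equivariance, and faithfulness.} For a morphism $f \colon \tilde X \to X$ and a closed point $\tilde x \in \tilde X$, if $\alpha \colon X' \to X$ witnesses $E \in \Bh^{\sharp}(X_{\C_p})$ near $f(\tilde x)$, the base change $\tilde X \times_X X' \to \tilde X$, after passing to the component through a chosen point above $\tilde x$ and applying Lemma \ref{cd-t13} for resolution, yields a witness $\tilde\alpha \colon \tilde X' \to \tilde X$ for $f^*E$; the commutative diagram for $\rho$ in Theorem \ref{cd-t36}(i) then follows from its $\Bh^s$-version combined with the functoriality of Galois descent. Galois equivariance is entirely analogous, applying $\sigma \in G_{\Q_p}$ to a witness. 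Finally, for Corollary \ref{cd-t37}: faithfulness of the fibre functor $E \mapsto E_x$ and injectivity of the evaluation map $\Gamma(X_{\C_p}, E) \to E_x^{\pi_1(X,x)}$ follow from the observation that $\Oh_{X_{\C_p}} \in \Bh^{\sharp}(X_{\C_p})$ trivially, and that pullback along a surjective witness $\alpha$ is injective on morphisms of vector bundles and on global sections, so one reduces to the $\Bh^s$ case. The principal technical point, on which all reductions rest, is the compatibility of the Galois-descent uniqueness of \cite{dewe1} Proposition 31 with the domination procedure for Galois covers; once that compatibility is in hand, every remaining assertion is formal.
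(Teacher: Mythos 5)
The paper's own proof of Theorem~\ref{cd:t8.3} consists of the single sentence ``The proof is straightforward,'' and your proposal is a correct and faithful elaboration of exactly what that sentence is meant to cover: reduce along witnesses $\alpha\colon X'\to X$ to the $\Bh^s$-statements of Proposition~\ref{cd-t35}, Theorem~\ref{cd-t36} and Corollary~\ref{cd-t37}, and combine the uniqueness of Galois descent from \cite{dewe1} Proposition~31 with the uniqueness clause of Theorem~\ref{cd-t24}(ii). In particular, the domination argument for the independence of choices (dominate two witnesses by a common Galois witness and compare the three descents) is precisely the mechanism already used in the proof of Theorem~\ref{cd:t8.2}, and the simultaneous-witness device for finitely many bundles is explicitly introduced in the first sentence of that proof.

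One small citation slip: in the functoriality step you invoke Lemma~\ref{cd-t13} to resolve a connected component of $\tilde X\times_X X'$, but that lemma concerns models over $\eo_{\oK}$. Over $\oQ_p$ you should instead invoke Chow's lemma (Theorem~\ref{cd-t15}) together with equivariant Hironaka resolution (Theorem~\ref{cd-t14} and the remark following it), which is also what underlies the adaptation of Lemmas~\ref{cd-t17} and~\ref{cd-t18} to the generic-fibre setting that you use in the domination and Galois-closure steps. This is a matter of referencing the correct intermediate tools rather than a gap in the argument; the substance is sound.
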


The proof is straightforward.

For curves, one can extend parallel transport even to the category 
of vector bundles $E$ on $X_{\C_p}$ for which there exists a finite \emph{but not necessarily \'etale} morphism $\alpha : X' \to X$ from a smooth projective curve $X'$ over $\oQ_p$ such that $\alpha^* E$ is in $\Bh^s(X'_{\C_p})$, see \cite{dewe2}. The main ingredient of this proof is a "no-monodromy theorem" around the singular points of $\alpha$ on $X$.

 It seems conceivable that the theory of  parallel transport can similarly be extended to all bundles $E$ on $X_{\C_p}$ for smooth complete varieties $X$ for which there exists a surjective morphism $\alpha : X' \to X$ from a smooth complete variety $X'$ over $\oQ_p$ such that $\alpha^* E$ is in $\Bh^s (X'_{\C_p})$. 
This property would follow if parallel transport existed for all numerically flat bundles on $X_{\C_p}$ as one may hope. 
%\input{abelian-varieties}
%\input{lit}
%\bibliographystyle{alpha}
%\bibliography{refs}

%\input{address}
\end{document}